\documentclass[11pt]{article}

\usepackage{amssymb,amsthm,amsmath,hyperref,txfonts,mathrsfs}

\usepackage{color}

\usepackage{graphicx,float}

\numberwithin{equation}{section}

\newtheorem{thm}{Theorem}[section]
\newtheorem{lem}{Lemma}[section]

\newtheorem{prop}{Proposition}[section]

\newcommand{\beq}{\begin{eqnarray}}
\newcommand{\eeq}{\end{eqnarray}}
\newcommand{\beqno}{\begin{eqnarray*}}
\newcommand{\eeqno}{\end{eqnarray*}}
\newcommand{\be}{\begin{equation}}
\newcommand{\ee}{\end{equation}}
\theoremstyle{definition}

\renewcommand\appendix{\par
    \setcounter{section}{0}
    \setcounter{lem}{0}
    \setcounter{equation}{0}
   \renewcommand\theequation{A.\arabic{equation}}
   \renewcommand{\thelem}{A.\arabic{lem}}

}

\allowdisplaybreaks

\topmargin       -0.40in \oddsidemargin    0.08in \evensidemargin
0.08in \marginparwidth   0.00in \marginparsep     0.00in \textwidth
15.5cm \textheight 23.5cm

\begin{document}
\title{\bf The existence and asymptotic stability of Plasma-Sheaths to the full Euler-Poisson system}

\author{
Lei Yao\thanks{School of Mathematics and Statistics, Northwestern Polytechnical University, Xi'an 710129, P.R. China. Email: yaolei1056@hotmail.com},\quad Haiyan Yin\thanks{Corresponding
author. School of Mathematical Sciences, Huaqiao
University, Quanzhou 362021, P.R. China. Email:
hyyin@hqu.edu.cn}, \quad Mengmeng Zhu\thanks{School of Mathematics and Center for Nonlinear Studies, Northwest University,
Xi'an 710127, P.R. China. Email: zhumm0907@163.com} }

\date{}
\maketitle
\begin{abstract}
The main concern of this paper is to study large-time behavior of the sheath to the full Euler-Poisson system. As is well known, the monotone stationary solution  under the Bohm criterion can be referred to as the sheath which is formed by interactions of plasma with wall. So far, the existence and asymptotic stability of stationary solutions in one-dimensional half space to the full Euler-Poisson system have been proved in \cite{DYZ2021}. In the present paper, we extend the results in \cite{DYZ2021} to $N$-dimensional ($N$=1,2,3) half space.
By assuming that the velocity of the positive ion satisfies the Bohm criterion at
the far field, we establish the global unique existence and the large time asymptotic stability
of the sheath in some weighted Sobolev spaces by weighted
energy method. Moreover, the time-decay rates are also obtained.
A key different point from \cite{DYZ2021} is to derive some boundary estimates on the derivative of the potential in the $x_1$-direction.
\vspace{4mm}

 {\textbf{Keyword:}  full Euler-Poisson system, stationary solution, asymptotic stability, convergence rate, weighted energy method.}\\

\end{abstract}

\section{Introduction}

In this paper, we consider the flow of positively charged ions in plasmas over the $N$-dimensional half space $\mathbb{R}_+^N:=\{(x_1,...,x_N)\in \mathbb{R}^N|x_1>0\}$ for $N=1,2,3$. The behavior of the ions is governed by the full Euler-Possion system of the form
\begin{equation}\label{C1}
\left\{
\begin{array}{l}
n_t+\textrm {div}(nu)=0,\\
(mnu)_t+\textrm {div}(mnu\otimes u)+\nabla p=n\nabla\phi,\\
W_t+\textrm{div}(Wu+pu)=nu\cdot\nabla\phi,\\
\Delta \phi=n-e^{-\phi},
\end{array}
\right.
\end{equation}
where unknown functions $n$, $u$, and $\phi$ stand for the density, velocity, and electrostatic potential, respectively.  A positive constant $m$ is the mass of an ion. The function $W$ is given by
\begin{align}\label{C2}
W=\frac{1}{2}mnu^2+\frac{p}{\gamma-1},
\end{align}
where the constant $\gamma>1$ is the ratio of specific heats and the pressure $p$ satisfies the equation of state:
\begin{align}\label{C3}
p=RTn
\end{align}
with the temperature $T$ and the Boltzmann constant $R>0$. Note that $\phi$ is so scaled that it has an opposite sign compared to the usual situation in physics. The fourth equation of \eqref{C1} is the Poisson equation and the electron density $n_e$ is given by $n_e=e^{-\phi}$ under the assumption of the Boltzmann relation. From \cite{DL2015}, we can know that the full Euler-Possion system \eqref{C1} can be formally derived through the macro-micro decomposition from the Vlasov-Possion-Boltzmann system for the ions flow in kinetic.

From \eqref{C2} and \eqref{C3}, we can rewrite \eqref{C1} as follows:
\begin{equation}\label{C4}
\left\{
\begin{array}{l}
n_t+\textrm {div}(nu)=0,\\
mn(u_t+u\cdot\nabla u)+\nabla (RTn)=n\nabla\phi,\\
T_t+u\cdot\nabla T+(\gamma-1)T\textrm {div}u=0,\\
\Delta \phi=n-e^{-\phi}.
\end{array}
\right.
\end{equation}
We put initial condition
\begin{align}\label{C5}
\left\{
\begin{array}{l}
(n,u,T)(0,x)=(n_0,u_0,T_0)(x), \quad \displaystyle\inf_{x\in \mathbb{R}_+^N} n_0(x)>0, \quad \displaystyle\inf_{x\in \mathbb{R}_+^N} T_0(x)>0,\\
\displaystyle\lim_{x_1\to +\infty}(n_0,u_0,T_0)(x_1,x')=(n_\infty, U_\infty, T_\infty)\in \mathbb{R}^{1+N+1}, \quad  x'=(x_2,...,x_N)\in\mathbb{R}^{N-1},
\end{array}
\right.
\end{align}
and boundary condition
\begin{equation}\label{C6}
\phi(t,0,x')=\phi_b\neq 0, \quad \lim_{x_1\to +\infty}\phi(t,x_1,x')=0, \quad x'\in\mathbb{R}^{N-1},
\end{equation}
where $U_\infty=(u_\infty,0,...,0)\in \mathbb{R}^{N}$ and $n_\infty>0$, $u_\infty<0$, $T_\infty>0$ and $\phi_b$ are constants.  To the end, we always assume that
\begin{align}\label{C7}
n_\infty=1,
\end{align}
so that the quasi-neutrality holds true at $x_1=+\infty$ owing to \eqref{C6} and \eqref{C7}.

The main concern of this paper is to study the asymptotic stability of a
plasma boundary layer, called as a sheath to the full Euler-Possion system. The sheath
appears when a material is surrounded by a plasma and the plasma contacts with
its surface. Because the thermal velocities of electrons are much higher than those
of ions, more electrons tend to hit the material compared with ions. This makes
the material negatively charged with respect to the surrounding plasma. Then the
material with a negative potential attracts and accelerates ions toward the surface,
while repelling electrons away from it. Eventually, there appears a non-neutral
potential region near the surface, where a nontrivial equilibrium of the densities is
achieved. This non-neutral region is referred as to the sheath. For more details of the sheath development, we
refer the reader to \cite{CFF1984, KUR1991, KUR1995}. The relevant mathematical study has attracted the attention of many mathematicians after the pioneering work of Langmuir \cite{IL1929,LTIL1929} which reveals the basic features of the plasma sheath transition. Then, Bohm \cite{DB1949} provided the explicit condition and clear interpretation for the formation of sheath, now known as the Bohm criterion. In a review paper \cite{KUR1991}, there are several kinds of the Bohm criterion according to the model. For the full Euler-Poisson equations \eqref{C4} in the present paper, the subsequent condition \eqref{C94} is called the Bohm criterion which indicate that ions must move toward the wall at infinity with a velocity greater than a critical value given particularly as the acoustic velocity for cold ions.

Mathematically, the plasma sheath is described as the stationary solution under the Bohm criterion and it is well-known that there are some mathematical works related to the study of the subject in this paper, we can refer to \cite{A2006, AMR2001, DYZ2021, HS2003, NOS2012, MO2015, MS2011, MS2016, MS2021} and the references therein. Precisely, Ambroso, M${\rm \acute{e}}$hats, and Raviart \cite{AMR2001} gave the existence of the monotone stationary solution by studying a singular perturbation problem for the nonlinear poisson equation. Then Ambroso \cite{A2006} considered a further study to determine the stationary solutions in terms of different levels of an associated energy functional and numerically show which solution is asymptotically stable in large time. Later, Suzuki \cite{MS2011} mathematically proved the above results, in other words, the time asymptotic stability of the monotone stationary solution to the isentropic Euler-Poisson system with the Dirichlet boundary condition was first proved over one-dimensional half space and the stability result requires a condition slightly stronger than the Bohm criterion to hold. Nishibata, Ohnawa, and Suzuki \cite{NOS2012} refined the result in \cite{MS2011} by proving the stability exactly under the Bohm criterion in the spatial dimension up to three and also deal with the degenerate case in which the Bohm criterion is marginally fulfilled.
For a multicomponent plasma containing
electrons and several components of ions, similar results to \cite{NOS2012,MS2011} were obtained
in \cite{MS2016} under the generalized Bohm criterion derived by Riemann in \cite{KUR1995}. Recently, Duan, Yin, and Zhu \cite{DYZ2021} obtained the existence of stationary solutions under the Bohm criterion and further obtained the time asymptotic stability of the monotone stationary solution for the full Euler-Poisson system with the Dirichlet boundary condition.
Moreover, Ohnawa \cite{MO2015} considered the isentropic Euler-Poisson system with the fluid-boundary interaction and gave the existence and asymptotic stability of the monotone stationary solution.  Suzuki and Takayama \cite{MS2021} study the existence and stability of stationary solutions for the isentropic Euler-Poisson system over the domain with the curved boundary. These
results validated mathematically the Bohm criterion and defined the fact that the
sheath corresponds to the stationary solution.

Let us also mention the results on the quasi-neutral limit problem as letting
the Debye length in the Euler-Poisson equations tend to zero.  G$\acute{e}$rard-Varet, Han-Kwan, and Rousset in \cite{Quas13, Quas14} studied the problems over the half space with various boundary conditions. In particular, the result
in \cite{Quas13} clarified the fact that the thickness of the boundary layer is of order of the Debye length.
The time-global solvability and quasi-neutral limit problem were investigated in \cite{Guo2011}
and \cite{quasi1}, respectively. The traveling wave solutions in the quasi-neutral limit problem were established in \cite{quasi2}.

Before closing this section, we give our notation used throughout this paper.
$C$ and $c$ denote some positive constants and they may take different values in different places. $[a](a\in \mathbb{R})$ denotes a maximum integer which does not exceed $a$.  For a nonnegative integer $k\geq 0$, $H^k(\mathbb{R}_+^N)$ denotes the $k$-th order Sobolev space in the $L^2$ sense, equipped with the norm $\Vert\cdot\Vert_k(=\Vert\cdot\Vert_{H^k})$. When $k$ = 0, we note $H^0=L^2$ and $\|\cdot\|:=\|\cdot\|_{L^2}$. $C^l([0,T];H^k(\mathbb{R}_+^N))$ denotes the space of the $l$-times continuously differentiable functions on the interval $[0,T]$ with values in $H^k(\mathbb{R}_+^N)$. Define the following function space
\begin{align*}
\mathscr{X}_i^j([0,T]):=\bigcap\limits_{k=0}^iC^k([0,T];H^{j+i-k}(\mathbb{R}_+^N)),\ \ i,j\in \mathbb{Z},\ \ i,j\geq 0.
\end{align*}
 A norm with an algebraic weight is defined by
\begin{align*}
\Vert f\Vert_{\alpha,\beta,i}&:=\left(\int W_{\alpha,\beta}\sum_{\lvert s\lvert\leq i}(\partial^s f)^2dx\right)^{\frac{1}{2}},\ \ i\in \mathbb{Z},\ \ i\geq 0,\\
W_{\alpha,\beta}&:=(1+\beta x_1)^\alpha,\ \ \alpha,\beta\in \mathbb{R},\ \ \beta>0
\end{align*}
and this norm is equivalent to the norm defined by $\Vert (1+\beta x_1)^{\frac{\alpha}{2}}f\Vert_{H^i}$. For simplicity, we often omit the last subscript $i$ when $i=0$, that is, $\Vert f\Vert_{\alpha,\beta}:=\Vert f\Vert_{\alpha,\beta,0}$.

\subsection{Main results}

Now we introduce the planar stationary solution $(\tilde{n}, \tilde{u}, 0,...,0, \tilde{T},\tilde{\phi})(x_1)$  which is a solution to \eqref{C4} independent of the time variable $t$ and of the tangential coordinates $x'$ in the half space. Therefore, the planar stationary solution $(\tilde{n}, \tilde{u}, 0,...,0, \tilde{T},\tilde{\phi})(x_1)$ satisfies the system
\begin{equation}\label{C8}
\left\{
\begin{array}{l}
(\tilde{n}\tilde{u})_{x_1}=0,\\
m\tilde{n}\tilde{u}\tilde{u}_{x_1}+(R\tilde{T}\tilde{n})_{x_1}=\tilde{n}\tilde{\phi}_{x_1},\\
\tilde{u}\tilde{T}_{x_1}+(\gamma-1)\tilde{T}\tilde{u}_{x_1}=0,\\
\tilde{\phi}_{x_1x_1}=\tilde{n}-e^{-\tilde{\phi}}
\end{array}
\right.
\end{equation}
with conditions \eqref{C5}$-$\eqref{C7}, that is
\begin{equation}\label{C9}
\left\{
\begin{array}{l}
\displaystyle\inf_{x_1\in \mathbb{R}_+} \tilde{n}(x_1)>0, \quad \inf_{x\in \mathbb{R}_+} \tilde{T}(x_1)>0,\\
\displaystyle\lim_{x_1\to +\infty}(\tilde{n}, \tilde{u}, \tilde{T},\tilde{\phi})(x_1)=(1,u_\infty,T_\infty,0),\\
\tilde{\phi}(0)=\phi_b.
\end{array}
\right.
\end{equation}

In the discussion of the existence of the stationary solution, the Sagdeev potential
\begin{align}\label{C10}
V(\phi):=\int_0^\phi\left[f^{-1}(\eta)-e^{-\eta}\right]d\eta, \quad f(n)=\frac{\gamma RT_\infty}{\gamma-1}\left(n^{\gamma-1}-1\right)+\frac{mu_\infty^2}{2}\left(\frac{1}{n^2}-1\right)
\end{align}
plays a crucial role. Here the inverse function $f^{-1}$ in \eqref{C10} is defined by adopting the branch which contains the far-field equilibrium state $(\tilde{n},\tilde{\phi})=(1,0)$. Also, the first and third equations of \eqref{C8} together with the boundary condition \eqref{C9} give that
\begin{align}\label{C10c}
\tilde{T}=T_\infty\tilde{n}^{\gamma-1}.
\end{align}

The unique existence of the monotone stationary solution obtained in \cite{DYZ2021} which deals with one-dimensional problem and we list the main results in the following.

\begin{lem}\label{Clem1}{\rm (see \cite{DYZ2021}).}
Consider the boundary-value problem \eqref{C8}$-$\eqref{C9}.
\item {\rm(i)}\ \ Let $u_\infty$ be a constant satisfying
\begin{align*}
either\enspace u_\infty^2\leq\frac{\gamma RT_\infty}{m}\enspace or \enspace\frac{\gamma RT_\infty+1}{m}\leq u_\infty^2.
\end{align*}
Then the stationary problem \eqref{C8}$-$\eqref{C9} has a unique monotone solution $(\tilde{n}, \tilde{u}, \tilde{T},\tilde{\phi})(x_1)$ verifying
 \begin{align*}
\tilde{n}, \tilde{u}, \tilde{T},\tilde{\phi} \in C({\overline{\mathbb{R}}_+}),\quad \tilde{n}, \tilde{u}, \tilde{T},\tilde{\phi}, \tilde{\phi}_{x_1}\in C^1({\mathbb{R}_+})
\end{align*}
if and only if the boundary data $\phi_b$ satisfies conditions
 \begin{align*}
V(\phi_b)\geq 0,\quad \phi_b\geq f(c_\infty),
\end{align*}
where $c_\infty=\left(\frac{mu_\infty^2}{\gamma RT_\infty}\right)^{\frac{1}{\gamma+1}}$ is the only critical point of $f$.
\item {\rm(ii)}\ \ Let $u_\infty$ be a constant satisfying
\begin{align*}
\frac{\gamma RT_\infty}{m}<u_\infty^2<\frac{\gamma RT_\infty+1}{m}.
\end{align*}
If $\phi_b\neq 0$, then the stationary problem \eqref{C8}$-$\eqref{C9} does not admit any solutions in the function space $C^1(\mathbb{R}_+)$. If $\phi_b=0$, then a constant state $(\tilde{n}, \tilde{u}, \tilde{T},\tilde{\phi})=(1,u_\infty, T_\infty,0)$ is the unique solution.

Moreover, the existing stationary solution enjoys some additional space-decay properties in the following two case:\\
$\bullet$\enspace (Nondegenerate case)\ \ Assume that
\begin{align*}
\frac{\gamma RT_\infty+1}{m}<u_\infty^2,\quad u_\infty<0,
\end{align*}
and $\phi_b\neq f(c_\infty)$ hold true. The stationary solution $(\tilde{n}, \tilde{u}, \tilde{T},\tilde{\phi})(x_1)$ belongs to $C^\infty(\overline{\mathbb{R}}_+)$ and verifies
\begin{align}\label{C11}
\lvert\partial_{x_1}^i(\tilde{n}-1)\rvert+\lvert\partial_{x_1}^i(\tilde{u}-u_\infty)\rvert+\lvert\partial_{x_1}^i(\tilde{T}-T_\infty)\rvert
+\lvert\partial_{x_1}^i\tilde{\phi}\rvert\leq C\lvert\phi_b\rvert e^{-cx_1}
\end{align}
for any $i\geq 0$, where $c$ and $C$ are positive constants.\\
$\bullet$\enspace (Degenerate case)\ \ Assume that
\begin{align*}
\frac{\gamma RT_\infty+1}{m}=u_\infty^2,\quad u_\infty<0,
\end{align*}
and $\phi_b>0$ hold true. Denote constants
\begin{equation*}
\left\{
\begin{array}{l}
c_0=1,\\
c_1=-2\Gamma,\\
c_2=\frac{(\gamma^2+\gamma)RT_\infty+2}{2},\\
c_3=-2\Gamma[(\gamma^2+\gamma)RT_\infty+2]
\end{array}
\right.
\end{equation*}
with
\begin{align}\label{C12}
 \Gamma=\sqrt{\frac{(\gamma^2+\gamma)RT_\infty+2}{12}}.
\end{align}
There are constants $\delta_0>0$ and $C>0$ such that for any $\phi_b\in(0,\delta_0)$,
\begin{align}\label{C13}
\sum_{i=0}^3\Vert\partial_{x_1}^iUG^{i+2}+c_i\Vert_{L^\infty}\leq C\phi_b
\end{align}
with
\begin{align*}
U=-\tilde{\phi},\quad \tilde{n}-1,\quad \log\tilde{n},\quad \frac{\tilde{u}}{u_\infty}-1,\quad \frac{1}{\gamma}\left(\frac{\tilde{T}}{T_\infty}-1\right),
\end{align*}
where $G=G(x_1)$ is a function of the form
\begin{align}\label{C14}
G(x_1)=\Gamma x_1+\phi_b^{-\frac{1}{2}}.\frac{}{}
\end{align}
\end{lem}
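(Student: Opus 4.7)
The plan is to reduce (\ref{C8})--(\ref{C9}) to a single second-order ODE in $\tilde\phi$ and then analyze it as a planar Hamiltonian system through the Sagdeev potential (\ref{C10}). The first step is to use the far-field data to integrate the three fluid equations: mass conservation yields $\tilde n\tilde u\equiv u_\infty$; the temperature equation, upon division by $\tilde u\ne 0$, integrates to the entropy identity (\ref{C10c}); and substituting both into the momentum equation produces an exact $x_1$-derivative whose integration gives the algebraic relation $f(\tilde n)=\tilde\phi$, with $f$ as in (\ref{C10}). Since $f'(n)=\gamma RT_\infty n^{\gamma-2}-mu_\infty^2/n^3$ vanishes only at $c_\infty$ and $f(n)\to+\infty$ as $n\to 0^+$ and $n\to+\infty$, the minimum of $f$ equals $f(c_\infty)$; hence the branch of $f^{-1}$ containing the equilibrium $(\tilde n,\tilde\phi)=(1,0)$ is defined exactly when $\tilde\phi\ge f(c_\infty)$, which explains the lower bound on $\phi_b$. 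Plugging $\tilde n=f^{-1}(\tilde\phi)$ into the Poisson equation and integrating against $\tilde\phi_{x_1}$ with the far-field data yields the phase-plane first integral
\begin{equation*}
\tfrac{1}{2}\tilde\phi_{x_1}^2=V(\tilde\phi),
\end{equation*}
and monotonicity selects $\tilde\phi_{x_1}=-\mathrm{sgn}(\phi_b)\sqrt{2V(\tilde\phi)}$; solvability thus requires $V\ge 0$ along the trajectory between $0$ and $\phi_b$, which reduces to the necessary condition $V(\phi_b)\ge 0$.

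To separate the three regimes I compute $V''(0)=1/f'(1)+1$ with $f'(1)=\gamma RT_\infty-mu_\infty^2$. Under case (i) either $f'(1)\ge 0$ (subsonic branch) or $f'(1)\le -1$ (strong supersonic branch), so $V''(0)\ge 0$ in both subcases; hence the origin of the Hamiltonian system $\tilde\phi''=V'(\tilde\phi)$ is a (possibly degenerate) saddle, and its one-dimensional stable manifold, combined with the nondegeneracy $\phi_b\ne f(c_\infty)$ that keeps the trajectory away from the critical point of $f$, produces a unique smooth monotone orbit from $\phi_b$ to $0$. Under case (ii) the same computation gives $V''(0)<0$, so $V(\phi)<0$ on a punctured neighborhood of $0$; no nontrivial orbit can approach the origin while maintaining $\tilde\phi_{x_1}^2=2V(\tilde\phi)\ge 0$, which forces $\phi_b=0$. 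Uniqueness and smoothness in case (i) then follow from the one-dimensionality of the stable manifold and standard ODE bootstrapping, since $\tilde n=f^{-1}(\tilde\phi)$ stays away from the critical point $c_\infty$ and $f\in C^\infty$ there.

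The exponential estimate (\ref{C11}) in the nondegenerate case is an immediate consequence of the strict saddle structure $V''(0)>0$: the leading expansion $V(\phi)\sim\tfrac{1}{2}V''(0)\phi^2$ gives $\tilde\phi_{x_1}\sim-\sqrt{V''(0)}\,\tilde\phi$, hence exponential decay at rate $c=\sqrt{V''(0)}$, with the estimates on higher derivatives and on $\tilde n,\tilde u,\tilde T$ obtained by differentiating the ODE. The main obstacle is the degenerate case $u_\infty^2=(\gamma RT_\infty+1)/m$, where $V''(0)=0$ forces the leading behavior to be cubic. A careful Taylor expansion gives
\begin{equation*}
V(\phi)=\tfrac{(\gamma^2+\gamma)RT_\infty+2}{6}\phi^3+O(\phi^4)=2\Gamma^2\phi^3+O(\phi^4)
\end{equation*}
with $\Gamma$ from (\ref{C12}), and integrating the leading ODE $\tilde\phi_{x_1}=-2\Gamma\tilde\phi^{3/2}$ with $\tilde\phi(0)=\phi_b$ yields the ansatz $\tilde\phi\approx G(x_1)^{-2}$, with $G$ as in (\ref{C14}), recovering the leading coefficient $c_0=1$. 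Upgrading this heuristic to (\ref{C13}) and identifying $c_1,c_2,c_3$ proceeds by differentiating the exact first integral $\tilde\phi_{x_1}^2=2V(\tilde\phi)$ successively, constructing iterative sub- and super-solutions of the form $G^{-(i+2)}(1+o(1))$, and closing the estimates uniformly in $\phi_b\in(0,\delta_0)$; this matched asymptotic construction, controlling the cumulative error at each derivative order, is the most delicate part of the argument.
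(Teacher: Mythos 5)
This lemma is stated with a citation to \cite{DYZ2021}; the present paper does not contain a proof of its own, so there is no paper-internal argument to compare against. Your reconstruction follows what is essentially the standard (and almost certainly the intended) route: integrate the fluid equations to get $\tilde n\tilde u\equiv u_\infty$, the entropy identity $\tilde T=T_\infty\tilde n^{\gamma-1}$, and the Bernoulli-type relation $f(\tilde n)=\tilde\phi$; substitute into the Poisson equation and derive the first integral $\frac12\tilde\phi_{x_1}^{2}=V(\tilde\phi)$; then analyze $V$ near the origin. Your computations check out: $V''(0)=1/f'(1)+1$ with $f'(1)=\gamma RT_\infty-mu_\infty^{2}$ gives the sign trichotomy between the two regimes in (i) and the regime in (ii); in the degenerate case $f'(1)=-1$ gives $V'''(0)=(\gamma^{2}+\gamma)RT_\infty+2=12\Gamma^{2}$, which yields the leading ODE $\tilde\phi_{x_1}\approx -2\Gamma\tilde\phi^{3/2}$ and, by successive differentiation, precisely the constants $c_0=1$, $c_1=-2\Gamma$, $c_2=6\Gamma^{2}$, $c_3=-24\Gamma^{3}$ listed in the lemma.

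There are two places where the argument as written does not close. First, part (i) is an \emph{if and only if} statement, and you only make the necessity of $V(\phi_b)\ge 0$ and $\phi_b\ge f(c_\infty)$ convincing. For sufficiency, you must show that $V(\tilde\phi)\ge 0$ for \emph{every} $\tilde\phi$ between $0$ and $\phi_b$ (not just at the endpoint $\phi_b$), and that $f^{-1}(\tilde\phi)$ remains on the correct branch over that whole interval; otherwise the orbit $\tilde\phi_{x_1}=-\mathrm{sgn}(\phi_b)\sqrt{2V(\tilde\phi)}$ is not even real-valued on the way down to $0$, and a trajectory on the stable manifold need not originate at $\phi_b$. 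This requires a global sign analysis of $V$ on the interval (for instance, showing $V'$ has a definite sign there, using that $f^{-1}(\phi)$ decays only algebraically while $e^{-\phi}$ decays exponentially in the supersonic branch), which you have not supplied; the local statement $V''(0)\ge 0$ does not by itself prevent $V$ from dipping negative somewhere in the interior. The boundary subcase $f'(1)=0$ (when $u_\infty^{2}=\gamma RT_\infty/m$, so $c_\infty=1$ and $f^{-1}$ has a square-root singularity at $0$) also requires separate treatment, since the formula $V''(0)=1/f'(1)+1$ is not available. Second, you acknowledge that upgrading the leading-order matched asymptotics to the uniform estimate \eqref{C13} with controlled errors up to third derivatives is the delicate part, but you leave it as a sketch; that gap is real, although you flag it honestly.
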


From the above lemma, it can be see that the condition that either
\begin{align}\label{C92}
\lvert\phi_b\rvert\ll1,\ \ u_\infty<-\sqrt{\frac{\gamma RT_\infty+1}{m}}
\end{align}
or
\begin{align}\label{C93}
0<\phi_b\ll 1,\ \ u_\infty=-\sqrt{\frac{\gamma RT_\infty+1}{m}}
\end{align}
is sufficient for the unique existence of the nontrivial monotone planar stationary solution.
Also, the condition
\begin{align}\label{C94}
\frac{\gamma RT_\infty+1}{m}\leq u_\infty^2,\quad u_\infty<0,
\end{align}
is called the Bohm criterion.

To study the asymptotic stability of the stationary solution $(\tilde{n}, \tilde{u}, 0,...,0, \tilde{T},\tilde{\phi})(x_1)$, it is convenient to employ unknown functions $v:=\log n$, $\tilde{v}:=\log \tilde{n}$ and perturbations
\begin{align*}
(\varphi,\psi,\zeta,\sigma)(t,x_1,x'):=(v,u,T,\phi)(t,x_1,x')-(\tilde{v},\tilde{U},\tilde{T},\tilde{\phi})(x_1),
\end{align*}
where $\tilde{U}:=(\tilde{u},0,...,0)$. Then we can reformulate the problem \eqref{C4}$-$\eqref{C7} in the framework of perturbations as follows:
\begin{equation}\label{C15}
\left\{
\begin{array}{l}
\varphi_t+u\cdot \nabla\varphi+{\rm div}\psi=-\psi_1\tilde{v}_{x_1},\\
m(\psi_t+u\cdot\nabla\psi)+RT\nabla\varphi+R\nabla\zeta=-m\psi_1\tilde{U}_{x_1}-R\zeta\nabla\tilde{v}+\nabla\sigma,\\
\zeta_t+u\cdot\nabla\zeta+(\gamma-1)T{\rm div}\psi=-\psi_1\tilde{T}_{x_1}-(\gamma-1)\zeta\tilde{u}_{x_1},\\
\Delta\sigma=e^{\varphi+\tilde{v}}-e^{\tilde{v}}-e^{-(\sigma+\tilde{\phi})}+e^{-\tilde{\phi}}.
\end{array}
\right.
\end{equation}
The initial and boundary conditions for $(\varphi,\psi,\zeta,\sigma)(t,x)$ follow from $\eqref{C5}-\eqref{C7}$ and \eqref{C9} as
\begin{align}\label{C16}
(\varphi,\psi,\zeta)(0,x)=(\varphi_0,\psi_0,\zeta_0)(x):=(\log n_0-\log\tilde{n},u_0-\tilde{U},T_0-\tilde{T}),
\end{align}
\begin{align}\label{C17}
\sigma(t,0,x')=0,\quad  x'\in \mathbb{R}^{N-1}.
\end{align}

If the perturbation $(\varphi,\psi,\zeta)(t,x)$ and $\lvert\phi_b\rvert$ are sufficiently small, all characteristics in the $x_1$-direction of hyperbolic system $\eqref{C15}_1$, $\eqref{C15}_2$ and $\eqref{C15}_3$ are negative owing to \eqref{C94}:
\begin{equation}\label{C18}
\left\{
\begin{array}{l}
\lambda_1[u_1,T]:=\frac{(m+1)u_1-\sqrt{(m-1)^2u_1^2+4\gamma RT}}{2}<0,\\
\lambda_2[u_1,T]:=u_1<0,\\
\lambda_3[u_1,T]:=\frac{(m+1)u_1+\sqrt{(m-1)^2u_1^2+4\gamma RT}}{2}<0,\\
\lambda_i[u_1,T]:=mu_1<0,\quad {\rm for}\ i=4,...,N+1.
\end{array}
\right.
\end{equation}
Hence, no boundary conditions for the hyperbolic system $\eqref{C15}_1$, $\eqref{C15}_2$ and $\eqref{C15}_3$ are necessary for the well-posedness of the initial boundary value problem \eqref{C15}$-$\eqref{C17}.

The asymptotic stability of the stationary solution $(\tilde{n}, \tilde{u}, 0,...,0, \tilde{T},\tilde{\phi})(x_1)$ is stated in the following theorems.
\begin{thm}\label{CThm1}{\rm(nondegenerate case)}.
Assume that the condition \eqref{C92} holds and let $s=[N/2]+2$, $N=1,2,3$.
\item {\rm(i)}\ \ The initial condition is supposed to satisfy
\begin{align*}
 (e^{\lambda x_1/2}\varphi_0,e^{\lambda x_1/2}\psi_0,e^{\lambda x_1/2}\zeta_0)\in(H^s(\mathbb{R}_+^N))^{N+2}
\end{align*}
for some positive constant $\lambda$, then there exists a positive constant $\delta$ such that if
\begin{align*}
\beta\in (0,\lambda] \ \ {\rm and} \ \ \beta+(\lvert\phi_b\rvert+\Vert(e^{\lambda x_1/2}\varphi_0,e^{\lambda x_1/2}\psi_0,e^{\lambda x_1/2}\zeta_0)\Vert_{H^s})\big{/}\beta\leq \delta,
\end{align*}
the initial boundary value problem \eqref{C15}$-$\eqref{C17} has a unique global solution $(\varphi,\psi,\zeta,\sigma)(t,x)$ satisfying
\begin{align*}
 (e^{\lambda x_1/2}\varphi,e^{\lambda x_1/2}\psi,e^{\lambda x_1/2}\zeta,e^{\lambda x_1/2}\sigma)\in (\mathscr{X}_s^0(\mathbb{R}_+))^{N+2}\times\mathscr{X}_s^2(\mathbb{R}_+).
\end{align*}
Moreover, the solution $(\varphi,\psi,\zeta,\sigma)(t,x)$ verifies the decay estimate
\begin{align*}
\Vert(e^{\lambda x_1/2}\varphi,e^{\lambda x_1/2}\psi,e^{\lambda x_1/2}\zeta)(t)\Vert_{H^s}^2+\Vert e^{\lambda x_1/2}\sigma(t)\Vert_{H^{s+2}}^2\leq C\Vert(e^{\lambda x_1/2}\varphi_0,e^{\lambda x_1/2}\psi_0,e^{\lambda x_1/2}\zeta_0)\Vert_{H^s}^2e^{-\mu t},
\end{align*}
where $C$ and $\mu$ are positive constants independent of $t$.
\item {\rm(ii)}\ \ Assume $\lambda\geq 2$ holds. The initial condition is supposed to satisfy
\begin{align*}
 ((1+\beta x_1)^{\lambda /2}\varphi_0,(1+\beta x_1)^{\lambda /2}\psi_0,(1+\beta x_1)^{\lambda /2}\zeta_0)\in(H^s(\mathbb{R}_+^N))^{N+2}
\end{align*}
for $\beta>0$, then there exists a positive constant $\delta$ such that if
\begin{align*}
\beta+(\lvert\phi_b\rvert+\Vert((1+\beta x_1)^{\lambda /2}\varphi_0,(1+\beta x_1)^{\lambda /2}\psi_0,(1+\beta x_1)^{\lambda /2}\zeta_0)\Vert_{H^s})\big{/}\beta\leq \delta,
\end{align*}
the initial boundary value problem \eqref{C15}$-$\eqref{C17} has a unique global solution $(\varphi,\psi,\zeta,\sigma)(t,x)$ satisfying
\begin{align*}
 ((1+\beta x_1)^{\varepsilon /2}\varphi,(1+\beta x_1)^{\varepsilon /2}\psi,(1+\beta x_1)^{\varepsilon /2}\zeta,(1+\beta x_1)^{\varepsilon /2}\sigma)\in (\mathscr{X}_s^0(\mathbb{R}_+))^{N+2}\times\mathscr{X}_s^2(\mathbb{R}_+),
\end{align*}
where $\varepsilon\in (0,\lambda]$.
Moreover, the solution $(\varphi,\psi,\zeta,\sigma)(t,x)$ verifies the decay estimate
\begin{align*}
\Vert((1+\beta x_1)^{\varepsilon /2}\varphi,&(1+\beta x_1)^{\varepsilon /2}\psi,(1+\beta x_1)^{\varepsilon /2}\zeta)(t)\Vert_{H^s}^2+\Vert (1+\beta x_1)^{\varepsilon /2}\sigma(t)\Vert_{H^{s+2}}^2\nonumber\\
&\leq C\Vert((1+\beta x_1)^{\lambda /2}\varphi_0,(1+\beta x_1)^{\lambda /2}\psi_0,(1+\beta x_1)^{\lambda /2}\zeta_0)\Vert_{H^s}^2(1+\beta t)^{-(\lambda-\varepsilon)},
\end{align*}
where $C$ is a positive constant independent of $t$.
\end{thm}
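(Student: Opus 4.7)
The plan is the standard scheme for boundary-layer stability: local existence by iteration, a uniform weighted a priori estimate, and a continuation argument. Local existence of $(\varphi,\psi,\zeta,\sigma)\in(\mathscr{X}_s^0)^{N+2}\times\mathscr{X}_s^2$ with the prescribed weight is obtained by iterating on a linearized problem in which the hyperbolic block $(\varphi,\psi,\zeta)$ is solved with no boundary condition at $x_1=0$ (every characteristic $\lambda_i$ is strictly negative by \eqref{C18} under \eqref{C94}), while $\sigma$ is recovered from the Poisson equation with Dirichlet data \eqref{C17}; the weight commutes with the iteration since the coefficients depend only on the previous iterate and on the stationary profile, whose exponential decay \eqref{C11} is guaranteed by \eqref{C92}. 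Once a uniform small a priori bound is available, global existence and the decay estimates follow in the usual way.

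For the a priori estimate I set $W=e^{\lambda x_1}$ in case (i) and $W=(1+\beta x_1)^{\lambda}$ in case (ii), and multiply $\eqref{C15}_1$ by $WRT\varphi$, $\eqref{C15}_2$ by $Wn\psi$, and $\eqref{C15}_3$ by $Wn\zeta/((\gamma-1)T)$ to symmetrize the hyperbolic block. Integrating by parts produces three effects: a boundary quadratic form on $\{x_1=0\}$ associated with the matrix $\mathrm{diag}(\lambda_1,\lambda_2,\lambda_3,\lambda_2,\dots,\lambda_2)$, which is negative definite by \eqref{C18} and therefore supplies a dissipative boundary term; a bulk contribution $-\tfrac12\int W_{x_1}u_1(\cdots)\,dx$ that is strictly positive since $W_{x_1}>0$ and $u_1<0$, giving coercivity of order $\lambda$ in case (i) and of order $\beta$ times a one-power-lower weighted norm in case (ii); and perturbative terms involving the stationary gradients $\tilde v_{x_1},\tilde u_{x_1},\tilde T_{x_1}$, which are absorbed via \eqref{C11} and the smallness of $|\phi_b|$. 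The electrostatic coupling is paired with the Poisson equation $\eqref{C15}_4$ through the identity
\begin{equation*}
\mathrm{div}(W\nabla\sigma)=W\bigl(e^{\varphi+\tilde v}-e^{\tilde v}-e^{-(\sigma+\tilde\phi)}+e^{-\tilde\phi}\bigr)+W_{x_1}\sigma_{x_1},
\end{equation*}
which yields a nonnegative energy density in $(\varphi,\sigma)$ up to cubic remainders.

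Higher derivatives are treated by first differentiating tangentially in $t$ and $x'$ (these directions preserve $\{x_1=0\}$) and running the same symmetric identity, then recovering $\partial_{x_1}$ derivatives algebraically from the hyperbolic equations, a step that works because the $x_1$-flux matrix is invertible under \eqref{C94}. The genuinely new difficulty, emphasised in the abstract, is the control of the trace $\partial_{x_1}\sigma|_{x_1=0}$, which appears both in the boundary term $W_{x_1}|_{x_1=0}\sigma\sigma_{x_1}$ and in the boundary integrals produced by higher normal derivatives when they are paired with the hyperbolic block. My plan is to differentiate $\eqref{C15}_4$ tangentially, use the homogeneous Dirichlet data \eqref{C17} to express $\partial_{x_1}^2\sigma|_{x_1=0}$ in terms of the nonlinear right-hand side and $\Delta_{x'}\sigma|_{x_1=0}$, and then estimate $\partial_{x_1}\sigma|_{x_1=0}$ by a one-dimensional trace/interpolation inequality in $x_1$; the resulting trace can then be absorbed by the strictly negative boundary quadratic form coming from the hyperbolic block. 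This is the main technical obstacle of the proof.

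Combining all estimates yields an inequality of the form $\tfrac{d}{dt}\mathcal{E}(t)+\mu\mathcal{D}(t)\le C(|\phi_b|+\sqrt{\mathcal{E}(t)})\mathcal{D}(t)$, with $\mathcal{E}$ equivalent to the weighted $H^s$ norm of $(\varphi,\psi,\zeta)$ plus the weighted $H^{s+2}$ norm of $\sigma$. In case (i) the dissipation $\mathcal{D}$ dominates $\mathcal{E}$ outright, and Gronwall gives the exponential rate $e^{-\mu t}$ directly. In case (ii) $\mathcal{D}$ only controls a norm one weight-power lower than $\mathcal{E}$, so I would propagate simultaneously the $\varepsilon$- and $\lambda$-weighted estimates and multiply the $\varepsilon$-one by the time weight $(1+\beta t)^{\lambda-\varepsilon}$; the time derivative of $(1+\beta t)^{\lambda-\varepsilon}\mathcal{E}_\varepsilon$ is then absorbed by $(1+\beta t)^{\lambda-\varepsilon}\mathcal{D}_\varepsilon$ using the uniform bound on $\mathcal{E}_\lambda$, producing the polynomial decay $(1+\beta t)^{-(\lambda-\varepsilon)}$ claimed in part (ii). A standard continuation argument combining local existence with this uniform bound then completes the proof.
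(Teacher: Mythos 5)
Your overall scheme (local existence by iteration on the characteristic structure, weighted symmetric-hyperbolic energy estimates, Poisson coupling, continuation argument, and the time-weight bootstrap $(1+\beta t)^{\lambda-\varepsilon}$ for part (ii)) matches the paper's architecture, and your identification of the boundary trace of $\partial_{x_1}\sigma$ as the genuinely new obstacle in the multi-dimensional case is correct — the authors single this out explicitly. However, your proposed resolution of that obstacle does not work, and it is here that your proof diverges from (and falls short of) the paper's.

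You propose to bound $\int_{\mathbb{R}^{N-1}}\sigma_{x_1}^2(t,0,x')\,dx'$ by evaluating $\sigma_{x_1x_1}|_{x_1=0}$ from the Poisson equation and then invoking a one-dimensional trace/interpolation inequality in $x_1$, absorbing the result into the negative-definite boundary quadratic form from the hyperbolic block. This has two gaps. First, a trace inequality of the type $\|\sigma_{x_1}(\cdot,0,\cdot)\|_{L^2(\mathbb{R}^{N-1})}^2 \lesssim \|\sigma_{x_1}\|_{L^2(\mathbb{R}_+^N)}\|\sigma_{x_1 x_1}\|_{L^2(\mathbb{R}_+^N)}$ produces bulk $L^2$ norms \emph{without} the factor $\varepsilon\beta$ that multiplies the weighted dissipation $\beta\|\nabla\sigma\|^2_{\varepsilon-1,\beta}$ in case (ii). Since $\beta$ is small (it appears in the smallness hypothesis $\beta+(\ldots)/\beta\le\delta$), the trace term cannot be absorbed by the dissipation, and the estimate fails to close at exactly the order where the boundary integral enters (it appears with coefficient $O(1)$ on the right-hand side of the first-order estimate, cf. \eqref{C103}/\eqref{C111}). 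Second, the hyperbolic boundary quadratic form involves $(\varphi,\psi,\zeta)$ and their tangential/time derivatives restricted to $\{x_1=0\}$, not $\sigma_{x_1}|_{x_1=0}$; the Poisson equation at the boundary relates $\sigma_{x_1x_1}|_{x_1=0}$ to $\varphi|_{x_1=0}$ (using $\sigma(t,0,x')=0$), but $\sigma_{x_1}|_{x_1=0}$ is a non-local quantity in $x_1$ and is not directly controlled by hyperbolic boundary data, so there is no sign-definite term in your framework to absorb it into.

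The paper's resolution is structurally different and this is the key lemma you are missing. Applying $e^{-v}\partial_t$ to $\eqref{C15}_4$, multiplying by $\sigma$, and then rewriting $\tilde u\,\sigma_{x_1}(e^\varphi-1)$ \emph{via the Poisson equation itself} as $\tilde u\,\sigma_{x_1}\bigl(e^{-\tilde v}\Delta\sigma+e^{-\tilde v-\tilde\phi}(e^{-\sigma}-1)\bigr)$, and integrating by parts using the Dirichlet condition \eqref{C17}, produces the identity \eqref{C65}, whose boundary contribution is $\int_{\mathbb{R}^{N-1}}\frac{-\tilde u e^{-\tilde v}}{2}\sigma_{x_1}^2\,dx'$ with \emph{positive} coefficient (since $\tilde u<0$). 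That is, the paper does not \emph{estimate} the trace, it exhibits it as a dissipative term that then appears on the good side of \eqref{C104}/\eqref{C112}, which can be used to cancel the same term appearing on the bad side of \eqref{C103}/\eqref{C111}. Without this algebraic observation the a priori estimate does not close in case (ii), and so the theorem does not follow from the plan as written; you should replace the trace-inequality step with the derivation of the sign-definite boundary identity along the lines of Lemma~\ref{Clem5}.
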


\begin{thm}\label{CThm2}{\rm(degenerate case)}. Assume that the condition \eqref{C93} holds and let $s=[N/2]+2$, $N=1,2,3$.
Let $4<\lambda_0<5.5693\cdots$ be the unique real solution to the equation
\begin{align}\label{C95}
 \lambda_0(\lambda_0-1)(\lambda_0-2)-12\left(\frac{2}{1+\gamma}\lambda_0+2\right)=0,
\end{align}
where $5.5693\cdots$ is the unique real solution to the equation
\begin{align}\label{C96}
  \lambda_0(\lambda_0-1)(\lambda_0-2)-12(\lambda_0+2)=0
\end{align}
and $\lambda\in [4,\lambda_0)$ is satisfied. For $\Gamma=\sqrt{\frac{(\gamma^2+\gamma)RT_\infty+2}{12}}$ and $\theta\in (0,1]$, there exists a positive constant $\delta$ such that if
$\phi_b \in (0,\delta]$, $\beta/(\Gamma \phi_b^{1/2})\in[\theta,1]$,
\begin{align*}
((1+\beta x_1)^{\lambda /2}\varphi_0,(1+\beta x_1)^{\lambda /2}\psi_0,(1+\beta x_1)^{\lambda /2}\zeta_0)\in(H^s(\mathbb{R}_+^N))^{N+2}
\end{align*}
and
\begin{align*}
\Vert((1+\beta x_1)^{\lambda /2}\varphi_0,(1+\beta x_1)^{\lambda /2}\psi_0,(1+\beta x_1)^{\lambda /2}\zeta_0)\Vert_{H^s})/\beta^3\leq\delta
\end{align*}
are satisfied, the initial boundary value problem \eqref{C15}$-$\eqref{C17} has a unique global solution $(\varphi,\psi,\zeta,\sigma)(t,x)$ satisfying
\begin{align*}
 ((1+\beta x_1)^{\varepsilon /2}\varphi,(1+\beta x_1)^{\varepsilon /2}\psi,(1+\beta x_1)^{\varepsilon /2}\zeta,(1+\beta x_1)^{\varepsilon /2}\sigma)\in (\mathscr{X}_s^0(\mathbb{R}_+))^{N+2}\times\mathscr{X}_s^2(\mathbb{R}_+),
\end{align*}
where $\varepsilon\in (0,\lambda]$. Moreover, the solution $(\varphi,\psi,\zeta,\sigma)(t,x)$ verifies the decay estimate
\begin{align*}
\Vert((1+\beta x_1)^{\varepsilon /2}\varphi,&(1+\beta x_1)^{\varepsilon /2}\psi,(1+\beta x_1)^{\varepsilon /2}\zeta)(t)\Vert_{H^s}^2+\Vert (1+\beta x_1)^{\varepsilon /2}\sigma(t)\Vert_{H^{s+2}}^2\nonumber\\
&\leq C\Vert((1+\beta x_1)^{\lambda /2}\varphi_0,(1+\beta x_1)^{\lambda /2}\psi_0,(1+\beta x_1)^{\lambda /2}\zeta_0)\Vert_{H^s}^2(1+\beta t)^{-(\lambda-\varepsilon)/3},
\end{align*}
where $C$ is a positive constant independent of $t$.
\end{thm}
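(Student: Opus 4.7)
My plan is to combine a standard local well-posedness result in the weighted Sobolev space with a uniform a priori estimate, and to close the argument by continuation. Because the Bohm criterion \eqref{C94} renders every $x_1$-characteristic in \eqref{C18} strictly negative, the hyperbolic subsystem $\eqref{C15}_{1,2,3}$ requires no boundary data, while the potential $\sigma$ is determined from the semilinear Poisson equation $\eqref{C15}_4$ under the Dirichlet condition \eqref{C17}. I work throughout with the algebraic weight $W_{\varepsilon,\beta}=(1+\beta x_1)^\varepsilon$ and exploit the equivalence $\|\cdot\|_{\varepsilon,\beta,i}\sim\|W_{\varepsilon,\beta}^{1/2}\cdot\|_{H^i}$.

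The zeroth-order a priori estimate rests on applying an appropriate positive-definite symmetrizer to the hyperbolic block of \eqref{C15} (weighted by the specific-heat factors so that its principal part becomes symmetric in $(\varphi,\psi,\zeta)$), multiplying by $W_{\varepsilon,\beta}$, and integrating over $\mathbb{R}_+^N$. Combined with testing $\eqref{C15}_4$ against $W_{\varepsilon,\beta}\sigma$, I expect an identity consisting of four structurally different ingredients: (i) the time derivative of a weighted energy $E_\varepsilon(t)\sim\|(\varphi,\psi,\zeta)\|_{\varepsilon,\beta}^2+\|\nabla\sigma\|_{\varepsilon,\beta}^2$; (ii) a coercive dissipation $D_\varepsilon(t)$ with weight $\beta W_{\varepsilon-1,\beta}$, produced by integration by parts against the negative characteristics, whose coefficient is $\sim|\lambda_{\min}[\tilde{u},\tilde{T}]|>0$; (iii) a harmful linear term generated by $(\tilde{v}_{x_1},\tilde{u}_{x_1},\tilde{T}_{x_1},\tilde{\phi}_{x_1})$, which by the degenerate estimate \eqref{C13} decays only like $G^{-3}(x_1)$ and therefore, under the scaling $\beta/(\Gamma\phi_b^{1/2})\in[\theta,1]$, is comparable to $\beta^3(1+\beta x_1)^{-3}$; (iv) cubic nonlinearities absorbable by smallness. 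Balancing (ii) against (iii) after three integrations by parts produces precisely the cubic polynomial inequality $\lambda(\lambda-1)(\lambda-2)>12\bigl(\tfrac{2}{1+\gamma}\lambda+2\bigr)$, i.e.\ the strict form of \eqref{C95}, and explains the sharp admissible range $\lambda\in[4,\lambda_0)$.

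Higher-order estimates would then be obtained by differentiating \eqref{C15} up to order $s=[N/2]+2$, handling the quasilinear terms by Moser-type commutator and composition inequalities in weighted Sobolev spaces, and invoking weighted elliptic regularity for the linearization $\Delta\sigma-(e^{\tilde{v}}+e^{-\tilde{\phi}})\sigma=F[\varphi,\sigma]$ to gain two derivatives on $\sigma$. A new technical ingredient compared with the one-dimensional work \cite{DYZ2021} is the boundary trace bound on $\partial_{x_1}\sigma|_{x_1=0}$, which I would derive by testing $\eqref{C15}_4$ against $W_{\varepsilon,\beta}\sigma_{x_1}$, integrating by parts, and using \eqref{C17} together with tangential Sobolev regularity to convert the arising boundary integral $\int_{\mathbb{R}^{N-1}}(\sigma_{x_1})^2(t,0,x')\,dx'$ into interior quantities controlled by $E_\varepsilon+D_\varepsilon$. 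Assembling these pieces yields a master inequality
\begin{align*}
\frac{d}{dt}E_\varepsilon(t)+\mu\beta D_\varepsilon(t)\le C\bigl(\beta+E_\varepsilon(t)^{1/2}\bigr)E_\varepsilon(t),
\end{align*}
with a positive dissipation constant $\mu$ precisely when $\lambda<\lambda_0$.

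The algebraic decay $(1+\beta t)^{-(\lambda-\varepsilon)/3}$ is recovered by a time-weighted induction on the weight exponent: multiply the master inequality for $E_\lambda$ by $(1+\beta t)^k$, integrate in time, and trade one unit of the spatial weight for one unit of the time weight via the pointwise comparison $W_{\lambda,\beta}\le(1+\beta t)^{-\kappa}W_{\lambda+\kappa,\beta}$ valid on the region of the half-space relevant to the dissipation. Iterating with unit increments in $k$ up to $k=(\lambda-\varepsilon)/3$ yields the advertised rate, the factor $1/3$ being forced by the cubic decay of the stationary profile in \eqref{C13} and by the scaling $\beta\sim\Gamma\phi_b^{1/2}$. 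The principal obstacle is exactly the calibration underlying step (iii): each integration by parts against the degenerate term $G^{-3}$ costs three units of the algebraic weight, so the whole scheme closes only under the strict cubic inequality \eqref{C95}, and it is this three-for-one trade-off that simultaneously forces both the smallness hypothesis $\|(\cdots)\|_{H^s}/\beta^3\le\delta$ on the initial data and the one-third factor in the time-decay exponent.
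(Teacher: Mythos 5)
Your overall architecture---local well-posedness in weighted spaces, a priori estimates by the weighted energy method, closure by continuation, and a space-time weighted induction to extract the algebraic decay rate---matches the paper's strategy of combining Lemma~\ref{Clem2} with Proposition~\ref{CProp1}. Your identification of the cubic $\lambda(\lambda-1)(\lambda-2)-12\bigl(\tfrac{2}{1+\gamma}\lambda+2\bigr)<0$ as the positive-definiteness condition for the zeroth-order quadratic form, arising from three integrations by parts of $(1+\beta x_1)^\varepsilon$, is exactly the content of the Claim \eqref{C40} in Lemma~\ref{Clem3}, and the three-for-one trade explaining the exponent $(\lambda-\varepsilon)/3$ is correctly attributed to the cubic space-decay of the degenerate profile \eqref{C13}. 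However, your handling of the boundary trace $\int_{\mathbb{R}^{N-1}}\sigma_{x_1}^2(t,0,x')\,dx'$ has a genuine gap, and it is precisely this term that the paper flags as the principal new difficulty over the one-dimensional result \cite{DYZ2021}.

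Testing $\eqref{C15}_4$ against $W_{\varepsilon,\beta}\sigma_{x_1}$, as you propose, produces on the right the term $\int_{\mathbb{R}_+^N}W_{\varepsilon,\beta}\tilde n\,\varphi\,\sigma_{x_1}\,dx$, which carries the full weight $W_{\varepsilon,\beta}$ on $\varphi$. The degenerate dissipation controls only $\beta^3\Vert\varphi\Vert_{\varepsilon-3,\beta}^2$ and $\beta\Vert\nabla\varphi\Vert_{\varepsilon-1,\beta}^2$; neither dominates $\Vert\varphi\Vert_{\varepsilon,\beta}^2$, and one further integration by parts only yields $\varepsilon\beta W_{\varepsilon-1,\beta}\varphi\sigma$ and $W_{\varepsilon,\beta}\varphi_{x_1}\sigma$, still off by powers of $\beta$ from what the dissipation provides. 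Trace inequalities also do not help, since they bound $\int_{x_1=0}\sigma_{x_1}^2$ by $\Vert\sigma\Vert_{H^2}^2\lesssim\Vert\varphi\Vert_{L^2}^2$ with constants that do not vanish as $\beta\to0$, whereas the continuity argument requires any such contribution to enter with a factor tunable by the smallness parameters. The paper's Lemma~\ref{Clem5} circumvents this by a qualitatively different maneuver: apply $e^{-v}\partial_t$ to $\eqref{C15}_4$ and pair the result with $\sigma$ (not with $\sigma_{x_1}$); the resulting $\varphi_t$ is eliminated via the continuity equation $\eqref{C15}_1$, converting the dangerous coupling into controllable flux and commutator terms, and the key identity \eqref{C65}---rewriting $\int W_{\varepsilon,\beta}\tilde u\sigma_{x_1}(e^\varphi-1)\,dx$ through the Poisson equation, integration by parts, and the boundary condition $\sigma(t,0,x')=0$---produces $\int_{x_1=0}\tfrac{-\tilde u e^{-\tilde v}}{2}\sigma_{x_1}^2\,dx'$ with a strictly positive coefficient (recall $\tilde u<0$) on the dissipative side of a differential inequality in time. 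Only because it appears there with the right sign can it absorb, after the $\epsilon$--$\epsilon^2$ linear combination of Lemmas~\ref{Clem3}--\ref{Clem5}, the bad boundary term that Lemma~\ref{Clem4} generates from the flux $\tilde n\,\mathrm{div}'\psi'\,\sigma_{x_1}$ at $x_1=0$. Without an analogue of this positivity your master inequality does not close for $N=2,3$.
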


The main concern of this paper is to study large-time behavior of the sheath to the full Euler-Poisson system.  So far, the existence and asymptotic stability of stationary solutions in one-dimensional half space to the full Euler-Poisson system have been proved in \cite{DYZ2021}. In the present paper, we extend the results in \cite{DYZ2021} to $N$-dimensional ($N$=1,2,3) half space.
By assuming that the velocity of the positive ion satisfies the Bohm criterion at
the far field, we establish the global unique existence and the large time asymptotic stability
of the sheath in some weighted Sobolev spaces by weighted
energy method. Moreover, the time-decay rates are also obtained. In fact, in comparison with the related one-dimensional results
in \cite{DYZ2021}, we need to make additional efforts to consider the effect of the spatial dimension $N=2$ or $N=3$ in the proof. Here, let us discuss more details of the difficulty and the strategies to
resolve as follows:

$\bullet$ The global existence for nonlinear hyperbolic systems usually can be obtained in the presence of dissipation terms or decay properties, but the Euler-Poisson system under consideration is not the case. Thus, the stability analysis for the Euler-Poisson system can not be handled by the standard methods and we need to borrow some ideas from \cite{NNY2007} which also studies an outflow problem for the Navier-Stokes equation. The main borrowing approach is that
we use two types of the weight functions as $(1+\beta x_1)^\lambda$ and $e^{\lambda x_1}$ to do our energy estimates.

$\bullet$ In the framework of the energy method, we first focus on the estimate up to the first order derivatives. It is easy to see that Lemma \ref{Clem3} is sufficient for the one-dimensional problem \cite{DYZ2021} since $\lvert {\rm div}\psi\rvert =\lvert \nabla\psi\rvert$. However, for the spatial dimension $N=2$ or $N=3$, Lemma \ref{Clem3} is insufficient to close the estimates due to the term $\int_0^t(1+\beta\tau)^\xi\beta\Vert\nabla\psi\Vert_{\varepsilon-1,\beta}^2d\tau$ appears on the right-hand side of  \eqref{C102}. Therefore, we then proceed to prove Lemma \ref{Clem4} which gives the estimates of $\int_0^t(1+\beta\tau)^\xi\beta\Vert\nabla\psi\Vert_{\varepsilon-1,\beta}^2d\tau$.

$\bullet$ For
the present problem (the spatial dimension $N=2$ or 3), the following boundary term $$\int_{\mathbb{R}^{N-1}}\sigma_{x_1}^2(t,0,x')dx'$$ (see \eqref{C60} for more details) appears on the right-hand side of \eqref{C103} in the derivation of Lemma \ref{Clem4}. The way of getting around the difficulty explained above is to make full use of the key observation \eqref{C65} to capture the positivity of the above boundary term, which is motivated by the works of \cite{NOS2012,MS2011}. More precisely, we first apply $e^{-v}\partial_t$ to $\eqref{C15}_4$ and multiply the resulting equation by $\sigma$ to obtain the key term $\tilde{u}\sigma_{x_1}(e^\varphi-1)$. Then by using $\eqref{C15}_4$ again, we can rewrite $$\tilde{u}\sigma_{x_1}(e^\varphi-1)=\tilde{u}\sigma_{x_1}\left\{e^{-\tilde{v}}\Delta\sigma+e^{-\tilde{v}-\tilde{\phi}}(e^{-\sigma}-1)\right\}.$$
The positivity of the boundary term can be derived by multiplying the above equality by a weighted function  and integrating the resultant over the $\mathbb{R}_+^N$, where we mainly utilize integration by parts and the boundary condition \eqref{C17}. For more details, we can see Lemma \ref{Clem5}.

$\bullet$ Moreover, it is apparent that we need also give a \emph{priori} estimates for higher orders. To this end, we simplify the estimates of higher orders by analyzing the structure of the equation \eqref{C15} and using Lemma \ref{Clem14}. Meanwhile, we are free to apply the same derivation as \eqref{C15} to \eqref{C77} since the boundary condition \eqref{C137} holds. Consequently, we only sketch the proof in Lemma \ref{Clem6}$-$Lemma \ref{Clem8}.

The rest of the paper is arranged as follows. Section 2 contains the energy estimates that will also give the decay estimate of the solution for the degenerate case. Such an argument would be more lengthy than the nondegenerate case and we make full use of the time-space weighted energy estimates to complete the proof of Theorem \ref{CThm2}. In section 3, we give the energy estimates for the nondegenerate case and complete the proof of Theorem \ref{CThm1}. In Appendix A, we record various analytic tools that are useful throughout the paper.

\section{Energy estimates for the degenerate case}

In this section, we study the stability of the planar stationary solution to \eqref{C4} for the degenerate case \eqref{C93}, where the Bohm criterion is marginally fulfilled. In what follows, we focus our attention on the condition that $\phi_b>0$ is suitably small to ensure the existence of a nontrivial monotone planar stationary solution to \eqref{C8}$-$\eqref{C9}.

Next, we summarize the local existence of the problem  \eqref{C15}$-$\eqref{C17} in the following lemma which is proved by a similar method as in \cite{MS2011} and \cite{NOS2012}. Here, the details are omitted for simplicity of presentation. Then the global existence of solution to the problem \eqref{C15}$-$\eqref{C17} will be obtained by combining the local existence result with a \emph{priori} estimates by the standard continuity argument.
\begin{lem}\label{Clem2}\rm{(Local existence).}
Let $0<\phi_b\ll 1$ and $u_\infty\leq-\sqrt{\frac{\gamma RT_\infty+1}{m}}$. Denote $W_\alpha=(1+\alpha x_1)^\lambda$ or $e^{\alpha x_1}$ with a constant $\alpha>0$ suitably small and a certain constant $\lambda>0$. Suppose the initial condition satisfies $(W_\alpha^{1/2}\varphi_0,W_\alpha^{1/2}\psi_0,W_\alpha^{1/2}\zeta_0)\in (H^s(\mathbb{R}_+^N))^{N+2}$ with $\inf_{\mathbb{R}_+^N}(\zeta_0+\tilde{T})>0$ and
\begin{align*}
 \lambda_3[(\psi_0+\tilde{u})(x),(\zeta_0+\tilde{T})(x)]<0,\ \ \forall x\in \mathbb{R}_+^N,
\end{align*}
where $\lambda_3[\cdot,\cdot]$ is defined in the third equation of \eqref{C18}. Let $0<\beta\leq\alpha$. Then, there exists a constant $T>0$ such that the initial boundary value problem  \eqref{C15}$-$\eqref{C17} admits a unique solution $(\varphi,\psi,\zeta,\sigma)(t,x)$ satisfying
\begin{align*}
(W_\beta^{1/2}\varphi,W_\beta^{1/2}\psi,W_\beta^{1/2}\zeta,W_\beta^{1/2}\sigma)\in (\mathscr{X}_s^0([0,T]))^{N+2}\times\mathscr{X}_s^2([0,T])
\end{align*}
as well as the condition $\eqref{C18}$ on $\{x_1=0\}$.
\end{lem}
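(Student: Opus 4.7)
The plan is to combine a standard Picard iteration with weighted $L^2$-based energy estimates, following the outflow-problem strategy of \cite{MS2011,NOS2012} and adapting it to the two admissible weight families $W_\alpha(x_1)=(1+\alpha x_1)^\lambda$ and $W_\alpha(x_1)=e^{\alpha x_1}$. Two structural facts drive the argument. First, both weights obey $|\partial_{x_1}W_\alpha|\le C\alpha W_\alpha$ on $\overline{\mathbb{R}}_+$, so commutators of $W_\beta$ with the convective derivatives produce a prefactor proportional to $\beta$ which can be absorbed for small $\beta$. Second, by \eqref{C18} the hyperbolic block $\eqref{C15}_1$--$\eqref{C15}_3$ is a pure outflow problem at $\{x_1=0\}$: no boundary condition is imposed on $(\varphi,\psi,\zeta)$, and the boundary fluxes produced by integration by parts in $x_1$ come with the correct sign.

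Starting from $(\varphi^0,\psi^0,\zeta^0,\sigma^0)\equiv 0$, I would iteratively construct $(\varphi^{k+1},\psi^{k+1},\zeta^{k+1})$ as the unique solution of the linear symmetric hyperbolic system obtained by freezing in \eqref{C15} the convective velocity as $u^k:=\psi^k+\tilde U$ and the temperature coefficient as $T^k:=\zeta^k+\tilde T$, without any boundary condition at $\{x_1=0\}$ since $\lambda_3[u^k,T^k]<0$ holds there for iterates close to the profile. The potential $\sigma^{k+1}$ is then obtained from the semilinear Dirichlet problem
\begin{equation*}
\Delta\sigma^{k+1}=e^{\varphi^{k+1}+\tilde v}-e^{\tilde v}-e^{-(\sigma^{k+1}+\tilde\phi)}+e^{-\tilde\phi}\quad\text{in }\mathbb{R}^N_+,\qquad \sigma^{k+1}|_{x_1=0}=0,
\end{equation*}
together with decay at infinity, which admits a unique weighted $H^{s+2}$ solution because the right-hand side is strictly decreasing in $\sigma^{k+1}$.

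I would then derive uniform weighted a priori estimates by applying tangential differentiation $\partial^{s'}$ with $|s'|\le s$ to the hyperbolic block, pairing the result against $W_\beta$ times the natural Euler symmetrizer, and integrating over $\mathbb{R}^N_+$. Integration by parts in $x_1$ produces (a) a boundary quadratic form at $\{x_1=0\}$ that is a positive combination of the $|\lambda_i^k|$'s times squared tangentially-differentiated unknowns, hence nonnegative; (b) interior commutator remainders of size $O(\beta)$ relative to the top-order weighted norm, coming from the term $W_\beta'/W_\beta=O(\beta)$; and (c) lower-order source terms involving $\tilde U_{x_1}$, $\tilde T_{x_1}$, $\tilde v_{x_1}$ and products of the unknowns, controlled via Gagliardo--Nirenberg at the regularity $s=[N/2]+2$. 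Normal derivatives $\partial_{x_1}^j$ with $j\ge 1$ are recovered algebraically from the equations, using invertibility of the $x_1$-principal part guaranteed by $\lambda_3<0$. Weighted elliptic regularity for $\Delta-e^{-\tilde\phi}$ with Dirichlet data yields the matching bound $\|W_\beta^{1/2}\sigma^{k+1}\|_{H^{s+2}}\le C\|W_\beta^{1/2}\varphi^{k+1}\|_{H^s}$, and a Gronwall argument produces a uniform bound for the iterates on some interval $[0,T]$ whose length depends only on the initial weighted norm and on $\alpha$.

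A standard contraction in the one-order-lower weighted norm (in which the quasilinear differences become linear, as in \cite{MS2011,NOS2012}) then extracts the unique limit $(\varphi,\psi,\zeta,\sigma)$, and persistence of both $\inf(\zeta+\tilde T)>0$ and $\lambda_3<0$ on $\{x_1=0\}$ follows from continuity in $t$ after possibly shrinking $T$. The main technical obstacle is the interaction of the weight with the $x_1$-transport: unlike the tangential directions, $W_\beta$ is not invariant under $\partial_{x_1}$, so weight-commutator remainders must be tracked at every level of differentiation and normal derivatives then recovered from the equations rather than directly estimated. The hypothesis $\beta\le\alpha$ is exactly what permits these remainders to be absorbed, since the initial data carries the strictly stronger weight $W_\alpha^{1/2}$ while $|\partial_{x_1}W_\beta|/W_\beta=O(\beta)$ on $\overline{\mathbb{R}}_+$.
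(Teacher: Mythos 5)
The paper gives no proof of this lemma; it simply says the local existence ``is proved by a similar method as in \cite{MS2011} and \cite{NOS2012}'' and omits the details. Your proposal is exactly a sketch of that method --- Picard iteration on the frozen-coefficient symmetric hyperbolic block with no boundary condition (valid because $\lambda_3<0$ makes $x_1=0$ a pure outflow boundary), a semilinear Dirichlet solve for $\sigma^{k+1}$, weighted energy estimates exploiting $|\partial_{x_1}W_\beta|/W_\beta=O(\beta)$, recovery of normal derivatives from the equations, and contraction in a lower-order norm --- so it is essentially the same approach the paper is referring to. One small correction: the right-hand side of the Poisson equation, $e^{\varphi+\tilde v}-e^{\tilde v}-e^{-(\sigma+\tilde\phi)}+e^{-\tilde\phi}$, is strictly \emph{increasing} in $\sigma$ (its $\sigma$-derivative is $e^{-(\sigma+\tilde\phi)}>0$), not decreasing as you wrote; it is precisely this monotonicity, combined with the Dirichlet condition $\sigma|_{x_1=0}=0$, that yields unique solvability, so the conclusion you draw is right even though the stated sign is flipped.
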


\subsection{A \emph{priori} estimates for the degenerate case}

The section is devoted to show a \emph{priori} estimates of Proposition \ref{CProp1}. To this end, we define the following notation for convenience:
\begin{align*}
\vartheta(t,x):=(\varphi,\psi,&\zeta)(t,x),\quad \vartheta_0(x):=(\varphi_0,\psi_0,\zeta_0)(x),\\
N_{\alpha,\beta}(T):&=\sup_{0\leq t\leq T}\Vert\vartheta(t)\Vert_{\alpha,\beta,[N/2]+2}.
\end{align*}

\begin{prop}\label{CProp1}
Assume the same conditions on $N,m,T_\infty,u_\infty, \lambda_0$, and $\lambda$ hold as in Theorem \ref{CThm2}. Let $(\vartheta,\sigma)(t,x)$ be a solution to \eqref{C15}$-$\eqref{C17} over a time interval $[0,T]$ for  $T>0$. For $\Gamma=\sqrt{\frac{(\gamma^2+\gamma)RT_\infty+2}{12}}$ and $\theta\in (0,1]$, there exist positive constants $\delta$ and $C$ independent of $T$ such that if all the following conditions
\begin{align}\label{C97}
\phi_b \in (0,\delta],
\end{align}
\begin{align}\label{C98}
\beta/(\Gamma \phi_b^{1/2})\in[\theta,1],
\end{align}

\begin{align}\label{C100}
((1+\beta x_1)^{\lambda /2}\vartheta,(1+\beta x_1)^{\lambda /2}\sigma)\in (\mathscr{X}_s^0([0,T]))^{N+2}\times\mathscr{X}_s^2([0,T]),
\end{align}
and
\begin{align}\label{C101}
N_{\lambda,\beta}(T)/\beta^3\leq\delta
\end{align}
are satisfied, then it holds for any $\varepsilon\in (0,\lambda]$ and $t\in [0,T]$ that
\begin{align}\label{C102-1}
\Vert\vartheta(t)\Vert_{\varepsilon,\beta,s}^2+\Vert\sigma(t)\Vert_{\varepsilon,\beta,s+2}^2\leq C\Vert\vartheta_0\Vert_{\lambda,\beta,s}^2(1+\beta t)^{-(\lambda-\varepsilon)/3}.
\end{align}
\end{prop}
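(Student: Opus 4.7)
The strategy is to combine weighted $L^2$ and $H^s$ energy estimates with the time--space weighting technique of \cite{NNY2007,NOS2012} that accounts for the algebraic (rather than exponential) decay of the stationary sheath in the degenerate regime. The ingredients that I expect the paper to assemble, and which I would assume here, are: the basic weighted estimate of Lemma \ref{Clem3}, the supplementary gradient estimate for $\psi$ of Lemma \ref{Clem4} needed in dimensions $N\ge 2$, the boundary positivity identity of Lemma \ref{Clem5}, and their higher-order counterparts in Lemmas \ref{Clem6}--\ref{Clem8}. I would carry these out in three phases, followed by an induction on a time-weight parameter that generates the rate $(1+\beta t)^{-(\lambda-\varepsilon)/3}$.

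First, at the $L^2$ level, I would test $\eqref{C15}_1$, $\eqref{C15}_2$, $\eqref{C15}_3$ by symmetrizing multipliers paired with the weight $W_{\varepsilon,\beta}$, and couple with $\eqref{C15}_4$ through integration by parts. Three positive contributions emerge: a boundary flux at $x_1=0$ produced by the negativity of $\lambda_i[u_1,T]$ from \eqref{C18}, an interior dissipative term $\beta\int W_{\varepsilon-1,\beta}|\vartheta|^2\,dx$ produced by differentiating the weight, and the standard $H^2$ control of $\sigma$ from the Poisson equation. Cross terms are majorized using the pointwise bounds $|\tilde\phi_{x_1}|+|\tilde U_{x_1}|+|\tilde T_{x_1}|\lesssim G^{-2}$ from \eqref{C13}, while nonlinear remainders are absorbed via the smallness hypotheses \eqref{C97}--\eqref{C101}. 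Because $|\nabla\psi|\ne|\textrm{div}\,\psi|$ for $N\ge 2$, Lemma \ref{Clem3} leaves uncontrolled a term $\int_0^t(1+\beta\tau)^\xi\beta\Vert\nabla\psi\Vert_{\varepsilon-1,\beta}^2\,d\tau$ on the right; this is closed by Lemma \ref{Clem4}, whose derivation generates the unsigned boundary trace $\int_{\mathbb{R}^{N-1}}\sigma_{x_1}^2(t,0,x')\,dx'$. The missing positivity is then supplied by Lemma \ref{Clem5}, which applies $e^{-v}\partial_t$ to $\eqref{C15}_4$, multiplies by $\sigma$, and exploits the boundary condition \eqref{C17} together with the rewriting $e^\varphi-1=e^{-\tilde v}\Delta\sigma+e^{-\tilde v-\tilde\phi}(e^{-\sigma}-1)$ from $\eqref{C15}_4$.

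Second, I would lift the estimate to the full $H^s$ scale with $s=[N/2]+2$. Tangential derivatives $\partial_{x'}^\alpha$ commute with the Dirichlet trace of $\sigma$ in \eqref{C17}, so the scheme of the first phase applies verbatim to $\partial_{x'}^\alpha(\varphi,\psi,\zeta,\sigma)$. Normal derivatives of $(\varphi,\psi,\zeta)$ are recovered algebraically from $\eqref{C15}_1$, $\eqref{C15}_2$, $\eqref{C15}_3$ by solving for $\partial_{x_1}$ in terms of $\partial_t$, $\partial_{x'}$, and the stationary profile, the matrix being invertible thanks to \eqref{C18}; normal derivatives of $\sigma$ come from $\eqref{C15}_4$, which trades $\partial_{x_1}^2\sigma$ for $\Delta_{x'}\sigma$ plus lower order. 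This is the content of Lemmas \ref{Clem6}--\ref{Clem8} and is where Lemma \ref{Clem14} streamlines the bookkeeping. The outcome is a single differential inequality for $\mathcal{E}_s(t):=\Vert\vartheta(t)\Vert_{\varepsilon,\beta,s}^2+\Vert\sigma(t)\Vert_{\varepsilon,\beta,s+2}^2$ whose right-hand side is absorbable by its dissipative counterpart after invoking \eqref{C101}.

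Third, to extract the algebraic factor $(1+\beta t)^{-(\lambda-\varepsilon)/3}$, I would multiply the above differential inequality by a continuously varying time weight $(1+\beta t)^\xi$ and integrate, as in \cite{NNY2007,NOS2012}. Because the dissipation is one power weaker in the spatial weight than the energy, each integration trades one power of $(1+\beta x_1)$ for one power of $(1+\beta t)$; with the degenerate scaling $\beta\sim\Gamma\phi_b^{1/2}$ and the stationary decay $G^{-2}$ from \eqref{C13}, this trade costs three levels of spatial weight per unit gain in $\xi$, which is the origin of the $1/3$ and is reflected in the cubic polynomial defining $\lambda_0$ in \eqref{C95}--\eqref{C96}. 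Iterating up to $\xi=(\lambda-\varepsilon)/3$, with the restriction $\lambda\in[4,\lambda_0)$ ensuring that the induction closes without producing divergent boundary or interior integrals, then yields \eqref{C102-1}. The main obstacle is the coupled management of the boundary terms throughout this induction: at each intermediate stage the gradient-estimate step reintroduces $\int_{\mathbb{R}^{N-1}}\sigma_{x_1}^2(t,0,x')\,dx'$, and one must verify that the sign-controlling mechanism of Lemma \ref{Clem5} remains effective when both the time weight and the higher derivatives are present, so that the sharp numerology of \eqref{C95}--\eqref{C96} balances all powers of $G$, $(1+\beta x_1)$, and $(1+\beta t)$ simultaneously at every level of the Sobolev hierarchy.
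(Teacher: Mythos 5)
Your proposal follows the paper's proof of Proposition \ref{CProp1} essentially verbatim: combine Lemmas \ref{Clem3}--\ref{Clem5} to close the first-order weighted estimate \eqref{C76} (with Lemma \ref{Clem4} generating the boundary trace $\int_{\mathbb{R}^{N-1}}\sigma_{x_1}^2\,dx'$ and Lemma \ref{Clem5} supplying its positivity via the rewriting of $\tilde u\sigma_{x_1}(e^\varphi-1)$), lift to $H^s$ by differentiating in $(t,x')$ through Lemmas \ref{Clem6}--\ref{Clem8} and recovering $x_1$-derivatives algebraically via Lemma \ref{Clem14}, then apply the time-weight induction of \cite{KM1985,MN1998} with $\xi=(\lambda-\varepsilon)/3+\kappa$. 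Two minor imprecisions are worth flagging, though they do not affect the validity of the assembly since you invoke the lemmas as black boxes: in the degenerate case the $\vartheta$-dissipation $\beta^3\Vert\vartheta\Vert_{\varepsilon-3,\beta}^2$ sits three powers of $(1+\beta x_1)$ below the energy, not one (it is this three-power gap, not a one-power gap compounded by other factors, that directly gives the exponent $(\lambda-\varepsilon)/3$); and the restriction $\lambda<\lambda_0$ is used \emph{inside} the proof of Lemma \ref{Clem3}, in the Claim \eqref{C40}, to make the discriminant bound \eqref{C46} for the quadratic form $Q(x)$ of \eqref{C43} strictly negative and hence the dissipation sign-definite, rather than to prevent divergence of the induction integrals.
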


The proof of Proposition \ref{CProp1} will be proved by following Lemma \ref{Clem3}$-$Lemma \ref{Clem8} at the end of this section.
\begin{lem}\label{Clem3}
 Under the same assumption as in Proposition \ref{CProp1}, it holds for any $\xi\geq0$ and $t\in [0,T]$ that
\begin{align}\label{C102}
(1+\beta t)^\xi&\Vert(\vartheta,\nabla\varphi,{\rm div} \psi,\nabla\zeta)(t)\Vert_{\varepsilon,\beta}^2\nonumber\\
&+\int_0^t(1+\beta\tau)^\xi\left\{\beta^3\Vert\vartheta(\tau)\Vert_{\varepsilon-3,\beta}^2+\beta\Vert(\nabla\varphi,{\rm div}\psi,\nabla\zeta,\nabla\sigma)(\tau)\Vert_{\varepsilon-1,\beta}^2\right\}d\tau\nonumber\\
\leq C\Vert\vartheta_0\Vert_{\varepsilon,\beta,1}^2&+C\xi\int_0^t(1+\beta\tau)^{\xi-1}\beta\Vert\vartheta(\tau)\Vert_{\varepsilon,\beta,1}^2d\tau+C\delta\int_0^t
(1+\beta\tau)^\xi\beta\Vert\nabla\psi(\tau)\Vert_{\varepsilon-1,\beta}^2d\tau,
\end{align}
where positive constants $\delta$ and $C$ are independent of $T$.
\end{lem}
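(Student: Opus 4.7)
\textbf{Proof plan for Lemma \ref{Clem3}.}

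The plan is to derive a weighted $L^2$ energy identity by symmetrizing the perturbation system \eqref{C15}, then couple it with the elliptic estimate coming from the Poisson equation, and finally multiply by a power of $(1+\beta t)$ to upgrade to the time-weighted form. The key dissipative mechanisms are (i) the weight-derivative term $\partial_{x_1}W_{\varepsilon,\beta}=\varepsilon\beta W_{\varepsilon-1,\beta}$ combined with the negativity of all $x_1$-characteristic speeds \eqref{C18}, and (ii) the elliptic gain $\|\nabla\sigma\|^2$ from multiplying \eqref{C15}$_4$ by $\sigma W_{\varepsilon,\beta}$, where the boundary term drops out by virtue of \eqref{C17}.

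First, I would multiply \eqref{C15}$_1$ by $RT\varphi W_{\varepsilon,\beta}$, \eqref{C15}$_2$ by $\psi\,W_{\varepsilon,\beta}$ and \eqref{C15}$_3$ by $\frac{R}{(\gamma-1)T}\zeta W_{\varepsilon,\beta}$ (the standard symmetrizer for the isentropic-like Euler part, corrected by the temperature factor coming from the third equation). Summing and integrating over $\mathbb{R}_+^N$, the two cross terms $\int RT\nabla\varphi\cdot\psi W$ and $\int R\nabla\zeta\cdot\psi W$ coming from \eqref{C15}$_2$ cancel the corresponding $\mathrm{div}\,\psi$ terms from \eqref{C15}$_1$ and \eqref{C15}$_3$ modulo integration-by-parts corrections. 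These corrections split into: a time derivative $\tfrac{d}{dt}\!\int W \Phi(\vartheta,\tilde T)\,dx$ of a pointwise-positive quadratic energy $\Phi\sim \varphi^2+|\psi|^2+\zeta^2$; a weight-derivative term $\varepsilon\beta\int W_{\varepsilon-1,\beta}\,\tilde u_1\,\Phi(\vartheta,\tilde T)\,dx$, which, because $\tilde u_1<0$ by \eqref{C94}, produces the $\beta\|\vartheta\|_{\varepsilon-1,\beta}^2$ dissipation; a boundary contribution at $x_1=0$ whose sign is dictated by the characteristic velocities in \eqref{C18} and hence is favorable; and error terms multiplied by $\tilde v_{x_1},\tilde u_{x_1},\tilde T_{x_1},\tilde\phi_{x_1}$, which by \eqref{C13}--\eqref{C14} are all pointwise $O(\phi_b(1+\beta x_1)^{-3})=O(\beta^2(1+\beta x_1)^{-3})$ under \eqref{C98}, giving exactly the $\beta^3\|\vartheta\|_{\varepsilon-3,\beta}^2$ dissipation after absorbing one power of $\beta$ into $W_{\varepsilon-3,\beta}$.

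Next, I would multiply \eqref{C15}$_4$ by $\sigma W_{\varepsilon,\beta}$, integrate by parts, use $\sigma|_{x_1=0}=0$ to eliminate the boundary term, and Taylor-expand the right-hand side as $e^{\tilde v}\varphi+e^{-\tilde\phi}\sigma+O(\varphi^2+\sigma^2)$ to extract $\|\nabla\sigma\|_{\varepsilon,\beta}^2$ together with a positive $\int W e^{-\tilde\phi}\sigma^2$ and a cross term $\int We^{\tilde v}\varphi\sigma$; the last term combines with the $\nabla\sigma\cdot\psi$ contribution from \eqref{C15}$_2$ (using \eqref{C15}$_1$ to trade $\psi$ for $\varphi$-type quantities) so that the full elliptic dissipation $\beta\|\nabla\sigma\|_{\varepsilon-1,\beta}^2$ appears in the integrated estimate. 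To pick up the derivative quantities $\nabla\varphi,\mathrm{div}\,\psi,\nabla\zeta$ on the left-hand side, I would read $\mathrm{div}\,\psi$ out of \eqref{C15}$_1$, $\nabla\varphi$ and $\nabla\zeta$ out of \eqref{C15}$_2$ combined with \eqref{C15}$_3$, expressing them as $\vartheta_t$ plus controlled remainders; estimating $\|\vartheta_t\|_{\varepsilon,\beta}^2$ via the equations and incorporating it into the energy functional then produces the required norm. In this step the nonlinear transport $u\cdot\nabla\psi$ from \eqref{C15}$_2$ leaves an irreducible remainder that is bounded by $\delta\beta\|\nabla\psi\|_{\varepsilon-1,\beta}^2$ (using $\|u\|_{L^\infty}\leq\|\tilde u\|_{L^\infty}+CN_{\lambda,\beta}\leq C\delta$), accounting for the last error term on the right-hand side.

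Finally, I would multiply the resulting differential inequality by $(1+\beta t)^\xi$; the product rule yields the correction $\xi\beta(1+\beta t)^{\xi-1}\int W_{\varepsilon,\beta}\Phi(\vartheta,\tilde T)\,dx$, which is moved to the right-hand side and bounded by $C\xi(1+\beta\tau)^{\xi-1}\beta\|\vartheta\|_{\varepsilon,\beta,1}^2$. Integrating in time from $0$ to $t$ closes the estimate. The main technical obstacle is \emph{the boundary handling at $x_1=0$}: one has to confirm that after all the integrations by parts (both from the hyperbolic part and from the Poisson multiplier) the remaining boundary terms at $x_1=0$ have the correct sign or are dominated by the interior dissipation, which relies crucially on \eqref{C18} and on the fact that $\sigma$ vanishes on the boundary by \eqref{C17}. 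A secondary difficulty is the bookkeeping that ensures the $\tilde v_{x_1},\tilde u_{x_1},\tilde T_{x_1},\tilde\phi_{x_1}$ contributions give exactly the $\beta^3\|\vartheta\|_{\varepsilon-3,\beta}^2$ weight (and not a weaker one), which is where the degenerate profile decay \eqref{C13}--\eqref{C14} and the calibration $\beta\sim\Gamma\phi_b^{1/2}$ from \eqref{C98} are indispensable.
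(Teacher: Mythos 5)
Your plan captures the broad scaffolding (zero-order weighted energy, an elliptic input from the Poisson equation, time-weighting and integration), but there are two concrete structural gaps that would make it fail in the degenerate case the lemma is concerned with.

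First, the weight-derivative term does \emph{not} produce $\beta\Vert\vartheta\Vert_{\varepsilon-1,\beta}^2$ dissipation as you claim; the conclusion of Lemma~\ref{Clem3} only gives the much weaker $\beta^3\Vert\vartheta\Vert_{\varepsilon-3,\beta}^2$, and this is exactly because the Bohm criterion is marginal (\eqref{C93}). After the Cauchy--Schwarz elimination of $\sigma$ in the zero-order energy, the quadratic form in $(\varphi,\psi_1,\zeta)$ coming from the weight-derivative contribution $I_1$ is \emph{not} uniformly positive definite. The paper must combine $I_1$ with the profile-decay terms collected in $I_5$ (whose coefficients decay like $G(x_1)^{-3}$ with $G=\Gamma x_1+\phi_b^{-1/2}$) into the quantity $I_6$, and then analyze the resulting quadratic form $Q(x)$ with coefficients $q_1,\dots,q_5$ in \eqref{C43}. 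Its Sylvester-type determinant is only marginally negative, at order $B(x_1)^{-2}=(x_1+\beta^{-1})^{-2}$, and the sign depends on $\varepsilon\leq\lambda<\lambda_0$ with $\lambda_0$ solving the cubic \eqref{C95}. This is where the $\beta^3\Vert\vartheta\Vert_{\varepsilon-3,\beta}^2$ rate actually originates. In your plan the profile derivatives are treated as absorbable error terms, but they are in fact part of the leading order of the estimate; this inversion is the decisive misattribution.

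Second, the statement to be proved contains the derivative quantities $\nabla\varphi,\,\mathrm{div}\,\psi,\,\nabla\zeta$ in the energy on the left, and your mechanism for generating them (reading them off the equations as $\vartheta_t$ plus remainders) is circular without a separate energy estimate for $\vartheta_t$, which you do not set up. The paper instead applies $\nabla$ to the first and third equations and $\mathrm{div}$ to the second, multiplies by $\tilde n\nabla\varphi$, $\tilde n\,\mathrm{div}\,\psi$, $\tilde n\nabla\zeta$ to build the first-order energy $\mathcal{E}_1$ in \eqref{C25}. This is not merely bookkeeping: the zero-order estimate produces the sign-indefinite coupling $\tilde n\,\mathrm{div}\,\psi\,\sigma$, and the first-order estimate produces $-\tilde n\,\mathrm{div}\,\psi\,\Delta\sigma$; multiplying the zero-order identity by $e^{-\tilde\phi}$ and adding, then substituting $\Delta\sigma$ via the Poisson equation \eqref{C28} and the continuity equation, turns the sum of these couplings into the time derivative of $\tfrac12\tilde n^2\varphi^2$ plus controllable remainders (identity \eqref{C29}). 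This cancellation is indispensable; without it the $\sigma\,\mathrm{div}\,\psi$ term cannot be absorbed, because in the degenerate case there is no $\beta\Vert\vartheta\Vert_{\varepsilon-1,\beta}^2$ dissipation available for absorption. Your plan makes no mention of this device, nor of the multiplier $-\varepsilon\beta\sigma W_{\varepsilon-1,\beta}$ applied to \eqref{C28} in \eqref{C41} that removes the bad $\sigma^2$ term from $I_6$ and produces the $\beta\Vert\nabla\sigma\Vert_{\varepsilon-1,\beta}^2$ dissipation stated in the lemma.
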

\begin{proof}
Multiplying $RT\eqref{C15}_1, \eqref{C15}_2$, and $\frac{R}{(\gamma-1)T}\eqref{C15}_3$ by $\tilde{n}\varphi$, $\tilde{n}\psi$, and $\tilde{n}\zeta$ respectively, using $\tilde{v}_{x_1}=\frac{\tilde{n}_{x_1}}{\tilde{n}}$, we get
\begin{align}\label{C19}
(\mathcal{E}_0)_t+{\rm div}\mathcal{H}_0+\mathcal{D}_0+\tilde{n}{\rm div}\psi\sigma=\mathcal{R}_0,
\end{align}
where we have denoted
\begin{align}\label{C20}
\mathcal{E}_0=\frac{\tilde{n}}{2}RT\varphi^2+\frac{\tilde{n}}{2}m\psi^2+\frac{\tilde{n}R}{2(\gamma-1)T}\zeta^2,
\end{align}
\begin{align}\label{C21}
\mathcal{H}_0=\frac{\tilde{n}}{2}RTu\varphi^2+\tilde{n}RT\varphi\psi+\frac{\tilde{n}}{2}mu\psi^2+R\tilde{n}\psi\zeta+\frac{R\tilde{n}u}{2(\gamma-1)T}\zeta^2
-\tilde{n}\sigma\psi,
\end{align}
\begin{align}
\mathcal{D}_0=&-\left(\frac{RTu_1}{2}\tilde{n}_{x_1}+\frac{R\tilde{n}u_1}{2}\tilde{T}_{x_1}+\frac{R\tilde{n}T}{2}\tilde{u}_{x_1}\right)\varphi^2-\tilde{n}R\tilde{T}_{x_1}
\varphi\psi_1-(\frac{m\tilde{n}}{2}\tilde{u}_{x_1}+\frac{mu_1}{2}\tilde{n}_{x_1})\psi^2+m\tilde{n}\tilde{u}_{x_1}\psi_1^2\nonumber\\
&+\frac{R\tilde{n}}{(\gamma-1)T}\tilde{T}_{x_1}\psi_1\zeta+\tilde{n}_{x_1}\psi_1\sigma+\left(\frac{R\tilde{n}\tilde{u}_{x_1}}{T}-\frac{Ru_1\tilde{n}_{x_1}
+R\tilde{n}\tilde{u}_{x_1}}{2(\gamma-1)T}+\frac{R\tilde{n}u_1\tilde{T}_{x_1}}{2(\gamma-1)T^2}\right)\zeta^2\label{C22}
\end{align}
and
\begin{align}\label{C23}
\mathcal{R}_0=&\left(\frac{R\tilde{n}\zeta_t}{2}+\frac{R\tilde{n}T{\rm div\psi}}{2}+\frac{R\tilde{n}u\cdot\nabla\zeta}{2}\right)\varphi^2+R\tilde{n}\varphi
 \psi\cdot\nabla\zeta\nonumber\\
&+\frac{m\tilde{n}{\rm div}\psi}{2}\psi^2+\left(\frac{R\tilde{n}{\rm div}\psi}{2(\gamma-1)T}-\frac{R\tilde{n}u\cdot\nabla\zeta}{2(\gamma-1)T^2}-
\frac{R\tilde{n}\zeta_t}{2(\gamma-1)T^2}\right)\zeta^2.
\end{align}

Now, let's focus on the strategy of handing the term $\tilde{n}{\rm div}\psi\sigma$.  Firstly, we multiply $\nabla\left(RT\eqref{C15}_1\right)$, ${\rm div}\eqref{C15}_2$, and $\nabla\left(\frac{R}{(\gamma-1)T}\eqref{C15}_3\right)$ by $\tilde{n}\nabla\varphi$, $\tilde{n}{\rm div}\psi$, and $\tilde{n}\nabla\zeta$ respectively, using $\tilde{v}_{x_1}=\frac{\tilde{n}_{x_1}}{\tilde{n}}$ again, one has
\begin{align}\label{C24}
(\mathcal{E}_1)_t+{\rm div}\mathcal{H}_1-\tilde{n}{\rm div}\psi\Delta\sigma=\mathcal{R}_1
\end{align}
where we also have denoted
\begin{align}\label{C25}
\mathcal{E}_1=\frac{\tilde{n}}{2}RT(\nabla\varphi)^2+\frac{\tilde{n}}{2}m({\rm div}\psi)^2+\frac{\tilde{n}R}{2(\gamma-1)T}\lvert\nabla\zeta\rvert^2,
\end{align}
\begin{align}\label{C26}
\mathcal{H}_1=\frac{\tilde{n}}{2}RTu\lvert\nabla\varphi\rvert^2+\tilde{n}RT\nabla\varphi{\rm div}\psi+\frac{\tilde{n}}{2}mu({\rm div}\psi)^2+R\tilde{n}{\rm div}\psi\nabla\zeta+\frac{R\tilde{n}u}{2(\gamma-1)T}\lvert\nabla\zeta\rvert^2,
\end{align}
and
\begin{align}\label{C27}
\mathcal{R}_1=&\left[\frac{\tilde{n}R}{2}\zeta_t+\frac{RTu_1}{2}\tilde{n}_{x_1}+\frac{\tilde{n}R}{2}{\rm div}(Tu)\right]\lvert\nabla\varphi\rvert^2
+\left[\frac{mu_1}{2}\tilde{n}_{x_1}+\frac{m\tilde{n}}{2}{\rm div}u\right]({\rm div}\psi)^2\nonumber\\
&+\left[\frac{R\tilde{n}}{2(\gamma-1)T}{\rm div}u+\frac{Ru_1}{2(\gamma-1)T}\tilde{n}_{x_1}-\frac{R\tilde{n}u}{2(\gamma-1)T^2}\nabla T-
\frac{R\tilde{n}}{2(\gamma-1)T^2}\zeta_t-\frac{R\tilde{n}}{T}\tilde{u}_{x_1}\right]\lvert\nabla\zeta\rvert^2\nonumber\\
&-\tilde{n}\tilde{u}_{x_1}\left[RT(\partial_{x_1}\varphi)^2+2m\partial_{x_1}\psi_1{\rm div}\psi+\frac{R}{(\gamma-1)T}(\partial_{x_1}\zeta)^2\right]+RT\tilde{n}_{x_1}(\partial_{x_1}\varphi
{\rm div}\psi-\nabla\varphi\cdot\nabla\psi_1)\nonumber\\
&-\tilde{n}\tilde{T}_{x_1}\left[Ru\partial_{x_1}\varphi\cdot\nabla\varphi+\frac{R}{(\gamma-1)T}\nabla\psi_1\cdot\nabla\zeta-\frac{Ru}{(\gamma-1)T^2}
\partial_{x_1}\zeta\cdot\nabla\zeta\right]\nonumber\\
&-R\tilde{n}T\sum_{i,j=1}^3\partial_{x_i}\varphi\partial_{x_j}\varphi\partial_{x_i}\psi_j-R\tilde{n}(u\cdot\nabla\varphi)(\nabla\varphi\cdot\nabla\zeta)
-m\tilde{n}{\rm div}\psi\sum_{i,j=1}^3\partial_{x_i}\psi_j\partial_{x_j}\psi_i\nonumber\\
&-\frac{R\tilde{n}}{(\gamma-1)T}\nabla\zeta\cdot\nabla\psi\cdot\nabla\zeta+\frac{R\tilde{n}}{(\gamma-1)T^2}u\cdot\nabla\zeta\lvert\nabla\zeta\rvert^2-R\tilde{n}\nabla T({\rm div}\psi+\varphi_t)\nabla\varphi\nonumber\\
&+\frac{R\tilde{n}}{(\gamma-1)T^2}\nabla T\cdot\nabla\zeta\zeta_t-R\tilde{n}_{x_1}\psi_1\nabla\varphi\cdot
\nabla T+\frac{R\tilde{n}\tilde{T}_{x_1}}{(\gamma-1)T^2}\psi_1\nabla \zeta\cdot\nabla T+\frac{R\tilde{n}\tilde{u}_{x_1}}{T^2}\zeta\nabla \zeta\cdot\nabla T\nonumber\\
&-R\tilde{n}\tilde{v}_{x_1x_1}(T\psi_1\partial_{x_1}\varphi+\zeta{\rm div}\psi)-\tilde{n}\tilde{u}_{x_1x_1}\left(m\psi_1{\rm div}\psi+\frac{R\tilde{n}}{T}\zeta
\partial_{x_1}\zeta\right)-\frac{R\tilde{n}}{(\gamma-1)T}\tilde{T}_{x_1x_1}\psi_1\partial_{x_1}\zeta.
\end{align}

Secondly, we can using Taylor expansion to rewrite $\eqref{C15}_4$, namely,
\begin{align}\label{C28}
\Delta\sigma=\tilde{n}\left(\varphi+\frac{1}{2}e^{\theta_1\varphi}\varphi^2\right)-e^{-\tilde{\phi}}\left(-\sigma+\frac{1}{2}e^{-\theta_2\sigma}\sigma^2\right),\quad \theta_1,\theta_2\in(0,1),
\end{align}
then using $\eqref{C15}_1$, one has
\begin{align}\label{C29}
\tilde{n}(\sigma e^{-\tilde{\phi}}-\Delta\sigma){\rm div}\psi=&\left(\frac{1}{2}\tilde{n}^2\varphi^2\right)_t+{\rm div}\left(\frac{\tilde{n}^2u\varphi^2}{2}\right)
-\frac{1}{2}\tilde{n}^2{\rm div}\psi\varphi^2-\frac{1}{2}\tilde{n}^2\tilde{u}_{x_1}\varphi^2-\tilde{n}u_1\tilde{n}_{x_1}\varphi^2\nonumber\\
&+\tilde{n}^2\tilde{v}_{x_1}\varphi\psi_1-\frac{\tilde{n}}{2}\left(\tilde{n}e^{\theta_1\varphi}\varphi^2-e^{-(\theta_2\sigma+\tilde{\phi})}\sigma^2\right){\rm div}\psi.
\end{align}

Finally, we multiply \eqref{C19} by $e^{-\tilde{\phi}}$ and add the result to \eqref{C24} with the help of \eqref{C29} to get
\begin{align}\label{C30}
&\left(e^{-\tilde{\phi}}\mathcal{E}_0+\mathcal{E}_1+\frac{1}{2}\tilde{n}^2\varphi^2\right)_t+{\rm div}\left(e^{-\tilde{\phi}}\mathcal{H}_0+\mathcal{H}_1+\frac{\tilde{n}^2u\varphi^2}{2}\right)+e^{-\tilde{\phi}}\tilde{\phi}_{x_1}(\mathcal{H}_0)_1
+e^{-\tilde{\phi}}\mathcal{D}_0\nonumber\\
&-\left(\frac{1}{2}\tilde{n}^2\tilde{u}_{x_1}+\tilde{n}u_1\tilde{n}_{x_1}\right)\varphi^2+\tilde{n}^2\tilde{v}_{x_1}\varphi\psi_1\nonumber\\
=&e^{-\tilde{\phi}}\mathcal{R}_0+\mathcal{R}_1+\frac{1}{2}\tilde{n}^2{\rm div}\psi\varphi^2+\frac{\tilde{n}}{2}\left(\tilde{n}e^{\theta_1\varphi}\varphi^2
-e^{-(\theta_2\sigma+\tilde{\phi})}\sigma^2\right){\rm div}\psi
:=\mathcal{N}_1,
\end{align}
where $(\mathcal{H}_0)_1$ denotes the first component of $\mathcal{H}_0$.

Multiply \eqref{C30} by $W_{\varepsilon,\beta}=(1+\beta x_1)^\varepsilon$ and integrating the resulting equation over $\mathbb{R}_+^N$ to obtain
\begin{align}\label{C31}
\frac{d}{dt}&\int_{\mathbb{R}_+^N}W_{\varepsilon,\beta}\left(e^{-\tilde{\phi}}\mathcal{E}_0+\mathcal{E}_1+\frac{1}{2}\tilde{n}^2\varphi^2\right)dx\nonumber\\
&+\underbrace{\int_{\mathbb{R}_+^N}\varepsilon\beta W_{\varepsilon-1,\beta}\left(-e^{-\tilde{\phi}}(\mathcal{H}_0)_1-\frac{1}{2}\tilde{n}^2u_1\varphi^2\right)dx}_{I_1}
+\underbrace{\int_{x_1=0}\left(-e^{-\tilde{\phi}}(\mathcal{H}_0)_1-\frac{1}{2}\tilde{n}^2u_1\varphi^2\right)dx'}_{I_2}\nonumber\\
&+\underbrace{\int_{\mathbb{R}_+^N}\varepsilon\beta W_{\varepsilon-1,\beta}(-\mathcal{H}_1)_1dx}_{I_3}+\underbrace{\int_{x_1=0}(-\mathcal{H}_1)_1dx'}_{I_4}\nonumber\\
&+\underbrace{\int_{\mathbb{R}_+^N}W_{\varepsilon,\beta}\left[e^{-\tilde{\phi}}\tilde{\phi}_{x_1}(\mathcal{H}_0)_1
+e^{-\tilde{\phi}}\mathcal{D}_0-\left(\frac{1}{2}\tilde{n}^2\tilde{u}_{x_1}
+\tilde{n}u_1\tilde{n}_{x_1}\right)\varphi^2+\tilde{n}^2\tilde{v}_{x_1}\varphi\psi_1\right]dx}_{I_5}\nonumber\\
=&\int_{\mathbb{R}_+^N}W_{\varepsilon,\beta}\mathcal{N}_1dx,
\end{align}
where $(\mathcal{H}_1)_1$ also denotes the first component of $\mathcal{H}_1$.

Firstly, we estimate three terms $I_2$, $I_3$, and $I_4$ on the left-hand side of $\eqref{C31}$. Due to the equality $u_1=\psi_1+(\tilde{u}-u_\infty)+u_\infty$, $T=\zeta+(\tilde{T}-T_\infty)+T_\infty$ and the fact of \eqref{C13} and Lemma \ref{Clem12}, then we can see, under the conditions \eqref{C93} and \eqref{C97}$-$\eqref{C101}, that
\begin{align}\label{C32}
\frac{(-\mathcal{H}_1)_1}{\tilde{n}}&=\frac{-RTu_1}{2}\lvert\nabla\varphi\rvert^2-RT{\rm div}\psi\partial_{x_1}\varphi+\frac{-mu_1}{2}({\rm div}\psi)^2-R{\rm div}\psi\partial_{x_1}\zeta+\frac{-Ru_1}{2(\gamma-1)T}\lvert\nabla\zeta\rvert^2\nonumber\\
\geq &\frac{-RT_\infty u_\infty}{2}\lvert\nabla\varphi\rvert^2-RT_\infty{\rm div}\psi\partial_{x_1}\varphi+\frac{-mu_\infty}{2}({\rm div}\psi)^2-R{\rm div}\psi\partial_{x_1}\zeta+\frac{-Ru_\infty}{2(\gamma-1)T_\infty}\lvert\nabla\zeta\rvert^2\nonumber\\
&-C(N_{\lambda,\beta(T)}+\phi_b)(\lvert\nabla\varphi\rvert^2+\lvert{\rm div}\psi\rvert^2+\lvert\nabla\zeta\rvert^2)\nonumber\\
\geq &(c-C\delta)(\lvert\nabla\varphi\rvert^2+\lvert{\rm div}\psi\rvert^2+\lvert\nabla\zeta\rvert^2).
\end{align}
Recall \eqref{C21} and \eqref{C26} for the definition of $\mathcal{H}_0$ and $\mathcal{H}_1$. In the same way, we have
\begin{align}
 I_2&\geq 0,\label{C33}\\
I_3&\geq c\beta\Vert(\nabla\varphi,{\rm div}\psi,\nabla\zeta)\Vert_{\varepsilon-1,\beta}^2, \label{C34}\\
 I_4&\geq 0,\label{C35}
\end{align}
where we let $\delta$ small enough and use the boundary condition $\sigma(t,0)=0$.

Next, we estimate $I_1$ and $I_5$.  Note that the estimates of $I_1$ and $I_5$ will be more complex where we take full advantage of properties of the stationary solution in \eqref{C13} to capture the full energy dissipation of all the zero-order components with positive coefficient. Using Lemma \ref{Clem12}, \eqref{C13}, \eqref{C10c} and the identity $\tilde{n}\tilde{u}\equiv u_\infty$, we get
\begin{align}\label{C36}
I_1\geq \int_{\mathbb{R}_+^N}\varepsilon\beta W_{\varepsilon-1,\beta}&\left\{\frac{1-G(x_1)^{-2}}{2}(1+RT_\infty)\lvert u_\infty\rvert\varphi^2-(1-2G(x_1)^{-2})RT_\infty\varphi\psi_1+\frac{1-G(x_1)^{-2}}{2}m\lvert
u_\infty\rvert\psi_1^2\right.\nonumber\\
&\left.-R(1-2G(x_1)^{-2})\psi_1\zeta+(1-G(x_1)^{-2})\frac{R\lvert u_\infty\rvert}{2(\gamma-1)T_\infty}\zeta^2+(1-2G(x_1)^{-2})\psi_1\sigma\right\}dx\nonumber\\
&+c\beta\Vert\psi'\Vert_{\varepsilon-1,\beta}^2\nonumber\\
&-CN_{\lambda,\beta}(T)\Vert(\vartheta,\sigma)\Vert_{\varepsilon-3,\beta}^2-C\phi_b\int_{\mathbb{R}_+^N}\beta W_{\varepsilon-1,\beta}G(x_1)^{-2}(\lvert\vartheta\rvert^2+\sigma^2)dx,
\end{align}
where $\psi':=(\psi_2,...,\psi_N)$ and $G(x_1)$ is defined in \eqref{C14}. At the same time, we can obtain
\begin{align}\label{C37}
I_5\geq \int_{\mathbb{R}_+^N} W_{\varepsilon,\beta}G(x_1)^{-3}\Gamma\lvert u_\infty\rvert &\left\{(1+R\gamma T_\infty)\varphi^2+\frac{2(1-R\gamma T_\infty)}{\lvert u_\infty\rvert}\varphi\psi_1+3m\psi_1^2+\frac{R\gamma}{(\gamma-1)T_\infty}\zeta^2\right.\nonumber\\
&\left.+\frac{4}{\lvert u_\infty\rvert}\sigma\psi_1+m\lvert\psi'\rvert^2\right\}dx\nonumber\\
&-C(N_{\lambda,\beta}(T)+\phi_b)\int_{\mathbb{R}_+^N}W_{\varepsilon,\beta}G(x_1)^{-3}(\lvert\vartheta\rvert^2+\sigma^2)dx,
\end{align}
where $\Gamma$ is defined in \eqref{C13}. Adding \eqref{C36} to \eqref{C37} together and using the Cauchy-Schwarz inequality
$$\sigma\psi_1\geq-\left(\frac{\lvert u_\infty \rvert}{2}\lvert\sigma\rvert^2+\frac{1}{2\lvert u_\infty \rvert}\psi_1^2\right),$$
one has
\begin{align}\label{C38}
I_1+I_5\geq& I_6+c\beta\Vert \psi'\Vert_{\varepsilon-1,\beta}^2-CN_{\lambda,\beta}(T)\Vert(\vartheta,\sigma)\Vert_{\varepsilon-3,\beta}^2\nonumber\\
&-C\phi_b\int_{\mathbb{R}_+^N}\beta W_{\varepsilon-1,\beta}G(x_1)^{-2}(\lvert \vartheta \rvert^2+\sigma^2)dx-C(N_{\lambda,\beta}(T)+\phi_b)
\int_{\mathbb{R}_+^N}W_{\varepsilon,\beta}G(x_1)^{-3}(\lvert \vartheta \rvert^2+\sigma^2)dx,
\end{align}
where
\begin{align}\label{C39}
I_6=&\int_{\mathbb{R}_+^N}\left\{\frac{\varepsilon\beta}{2}W_{\varepsilon-1,\beta}(1-G(x_1)^{-2})(1+RT_\infty)\lvert u_\infty\rvert+
\Gamma\lvert u_\infty\rvert(1+R\gamma T_\infty)W_{\varepsilon,\beta}G(x_1)^{-3}\right\}\varphi^2dx\nonumber\\
&+\int_{\mathbb{R}_+^N}\left\{-RT_\infty\varepsilon\beta W_{\varepsilon-1,\beta}(1-2G(x_1)^{-2})+2\Gamma(1-R\gamma T_\infty)W_{\varepsilon,\beta}G(x_1)^{-3}\right\}\varphi\psi_1dx\nonumber\\
&+\frac{1}{\lvert u_\infty\rvert}\int_{\mathbb{R}_+^N}\left\{\frac{\varepsilon\beta}{2}W_{\varepsilon-1,\beta}
\left[\gamma RT_\infty+(1-\gamma RT_\infty)G(x_1)^{-2}\right]+
\Gamma(1+3\gamma RT_\infty)W_{\varepsilon,\beta}G(x_1)^{-3}\right\}\psi_1^2dx\nonumber\\
&-\frac{\lvert u_\infty\rvert}{2}\int_{\mathbb{R}_+^N}\left\{\varepsilon\beta W_{\varepsilon-1,\beta}(1-2G(x_1)^{-2})+4\Gamma W_{\varepsilon,\beta}G(x_1)^{-3}\right\}\sigma^2dx\nonumber\\
&+\int_{\mathbb{R}_+^N}\left\{\frac{\varepsilon\beta}{2}W_{\varepsilon-1,\beta}\frac{R\lvert u_\infty\rvert}{(\gamma-1)T_\infty}(1-G(x_1)^{-2})
+\frac{\Gamma \gamma R\lvert u_\infty\rvert}{(\gamma-1)T_\infty}W_{\varepsilon,\beta}G(x_1)^{-3}\right\}\zeta^2dx\nonumber\\
&-\int_{\mathbb{R}_+^N}\varepsilon\beta W_{\varepsilon-1,\beta}R(1-2G(x_1)^{-2})\psi_1\zeta dx.
\end{align}

$Claim$
\begin{align}\label{C40}
I_6\geq c\beta^3\Vert(\varphi,\psi_1,\zeta)\Vert_{\varepsilon-3,\beta}^2+c\beta\Vert\nabla\sigma\Vert_{\varepsilon-1,\beta}^2.
\end{align}
In fact, we multiply $\eqref{C28}$ by $-\varepsilon\beta\sigma W_{\varepsilon-1,\beta}$ and integrate the result over $\mathbb{R}_+^N$ with the help of Lemma \ref{Clem11}, \eqref{C13} and the Schwarz inequality to get
\begin{align}\label{C41}
\int_{\mathbb{R}_+^N} &\varepsilon\beta W_{\varepsilon-1,\beta}\left\{\lvert\nabla\sigma\rvert^2+\frac{1}{2}(1-2G(x_1)^{-2})\sigma^2\right\}dx\nonumber\\
\leq &\int_{\mathbb{R}_+^N}\frac{\varepsilon\beta}{2}W_{\varepsilon-1,\beta}(1-2G(x_1)^{-2})\varphi^2dx+\int_{\mathbb{R}_+^N}\frac{1}{2}\varepsilon
(\varepsilon-1)(\varepsilon-2)\beta^3W_{\varepsilon-3,\beta}\varphi^2dx\nonumber\\
&+C(\phi_b+\beta^2+N_{\lambda,\beta}(T)\beta^{-2})\beta^3\Vert\varphi\Vert_{\varepsilon-3,\beta}^2.
\end{align}
Applying the above estimate into \eqref{C39}, we have
\begin{align}\label{C42}
I_6\geq\int_{\mathbb{R}_+^N}\beta W_{\varepsilon-1,\beta}Q(x)dx+\lvert u_\infty\rvert\varepsilon\beta\Vert\nabla\sigma\Vert_{\varepsilon-1,\beta}^2
-C(\phi_b+\beta^2+N_{\lambda,\beta}(T)\beta^{-2})\beta^3\Vert\varphi\Vert_{\varepsilon-3,\beta}^2,
\end{align}
where the quadratic form of $\varphi$, $\psi_1$, and $\zeta$ is defined by
\begin{align}\label{C43}
Q(x)=\lvert u_\infty\rvert q_1(x_1)\varphi^2+q_2(x_1)\varphi\psi_1+\frac{1}{\lvert u_\infty\rvert}q_3(x_1)\psi_1^2+\lvert u_\infty\rvert q_4(x_1)\zeta^2+q_5(x_1)\zeta\psi_1
\end{align}
with
\begin{align*}
q_1(x_1)&=\frac{\varepsilon}{2}RT_\infty+B(x_1)^{-2}\Gamma^{-2}\left\{\frac{(1-RT_\infty)\varepsilon}{2}S(x_1)^2-\frac{\Gamma^2}{2}\varepsilon(\varepsilon-1)(\varepsilon
-2)+(\gamma RT_\infty-1)S(x_1)^3\right\},\\
q_2(x_1)&=-RT_\infty\varepsilon+B(x_1)^{-2}\Gamma^{-2}\left\{2\varepsilon RT_\infty S(x_1)^2+2(1-\gamma RT_\infty) S(x_1)^3\right\},\\
q_3(x_1)&=\frac{\varepsilon}{2}\gamma RT_\infty+B(x_1)^{-2}\Gamma^{-2}\left\{\frac{(1-RT_\infty)\varepsilon}{2}S(x_1)^2+(3\gamma RT_\infty+3)S(x_1)^3\right\},\\
q_4(x_1)&=\frac{\varepsilon R}{2(\gamma-1)T_\infty}+B(x_1)^{-2}\Gamma^{-2}\left\{-\frac{\varepsilon R}{2(\gamma-1)T_\infty}S(x_1)^2+\frac{\gamma R}{(\gamma-1)T_\infty}S(x_1)^3\right\},\\
q_5(x_1)&=-\varepsilon R+2\varepsilon RB(x_1)^{-2}\Gamma^{-2}S(x_1)^2,\\
B(x_1)&=x_1+\beta^{-1}, \ \ \ S(x_1)=(x_1+\beta^{-1})/(x_1+\Gamma^{-1}\phi_b^{-1/2}).
\end{align*}
From \eqref{C97}$-$\eqref{C101}, it holds that
\begin{align*}
S(x_1)\geq1,\quad B(x_1)^{-2}\leq \beta^2\leq C\phi_b \leq C\delta.
\end{align*}
Then direct computations and the above estimate indicate that
\begin{align}\label{C44}
q_1(x_1)>0,\quad q_3(x_1)>0,\quad q_4(x_1)>0,
\end{align}
\begin{align}\label{C45}
q_2(x_1)^2-4q_1(x_1)q_3(x_1)<0,\quad q_5(x_1)^2-4q_3(x_1)q_4(x_1)<0,
\end{align}
and
\begin{align}\label{C46}
&q_1(x_1)q_5(x_1)^2+q_4(x_1)q_2(x_1)^2-4q_1(x_1)q_3(x_1)q_4(x_1)\nonumber\\
\leq&\frac{\varepsilon^2R^2}{2(\gamma-1)}B(x_1)^{-2}\left\{\varepsilon(\varepsilon-1)(\varepsilon-2)-12\left(\frac{2\varepsilon}{\gamma+1}+2\right)+C\beta^2\right\}\nonumber\\
\leq& -cB(x_1)^{-2},
\end{align}
where we have used the definition of $\lambda_0$, the inequality $\varepsilon \leq \lambda <\lambda_0$, and the smallness of $\delta$.  Thus, combining $\eqref{C44}$ together with $\eqref{C45}$ and $\eqref{C46}$, it holds that
\begin{align}\label{C47}
\int_{\mathbb{R}_+^N}\beta W_{\varepsilon-1,\beta}Q(x)dx \geq c\beta\int_{\mathbb{R}_+^N}W_{\varepsilon-1,\beta}B(x_1)^{-2}(\varphi^2+\psi_1^2+\zeta^2)dx=c\beta^3\Vert(\varphi,\psi_1,\zeta)\Vert_{\varepsilon-3,\beta}^2.
\end{align}
Substituting \eqref{C47} into \eqref{C42}, then \eqref{C40} is an easy consequence of letting $\beta^2\leq C\phi_b\leq C\delta$ and $N_{\lambda,\beta}(T)/\beta^3\leq \delta$ for $\delta >0$ small enough.

For the last three terms on the right of \eqref{C38}, it holds that
\begin{align}\label{C48}
C&N_{\lambda,\beta}(T)\Vert(\vartheta,\sigma)\Vert_{\varepsilon-3,\beta}^2+C\phi_b\int_{\mathbb{R}_+^N}\beta W_{\varepsilon-1,\beta}G(x_1)^{-2}(\lvert \vartheta \rvert^2+\sigma^2)dx\nonumber\\
&+C(N_{\lambda,\beta}(T)+\phi_b)
\int_{\mathbb{R}_+^N}W_{\varepsilon,\beta}G(x_1)^{-3}(\lvert \vartheta \rvert^2+\sigma^2)dx
\leq C\delta\beta^3\Vert\vartheta\Vert_{\varepsilon-3,\beta}^2.
\end{align}
By substituting \eqref{C48} and \eqref{C40} into \eqref{C38}, we have
\begin{align}\label{C49}
I_1+I_5\geq (c-C\delta)(\beta^3\Vert\vartheta\Vert_{\varepsilon-3,\beta}^2+\beta\Vert\nabla\sigma\Vert_{\varepsilon-1,\beta}^2).
\end{align}

Finally, we estimate the last term of \eqref{C31}. Recall \eqref{C30}, \eqref{C23}, and \eqref{C27} for the definition of $\mathcal{N}_1$, $\mathcal{R}_0$, and $\mathcal{R}_1$ respectively. From \eqref{C13}, the Sobolev inequality, the Cauchy-Schwarz inequality and the elliptic estimate in Lemma \ref{Clem11}, we can easily get
\begin{align}\label{C50}
&\left\lvert\int_{\mathbb{R}_+^N}W_{\varepsilon,\beta}e^{-\tilde{\phi}}\mathcal{R}_0dx\right\rvert\nonumber\\
\leq &C\Vert(1+\beta x_1)^2\vartheta\Vert_{L^\infty}\int_{\mathbb{R}_+^N}W_{\varepsilon-2,\beta}\lvert\vartheta\rvert \lvert\nabla\vartheta\rvert dx+C\Vert\vartheta\Vert_{L^\infty}\int_{\mathbb{R}_+^N}W_{\varepsilon,\beta}G(x_1)^{-3}\lvert\vartheta\rvert^2dx\nonumber\\
\leq &CN_{\lambda,\beta}(T)(\Vert\vartheta\Vert_{\varepsilon-3,\beta}^2+\Vert\nabla\vartheta\Vert_{\varepsilon-1,\beta}^2)\nonumber\\
\leq &C\delta(\beta^3\Vert\vartheta\Vert_{\varepsilon-3,\beta}^2+\beta\Vert\nabla\vartheta\Vert_{\varepsilon-1,\beta}^2)
\end{align}
and
\begin{align}\label{C51}
&\left\lvert\int_{\mathbb{R}_+^N}W_{\varepsilon,\beta}\mathcal{R}_1dx\right\rvert\nonumber\\
\leq &C\Vert(1+\beta x_1)\nabla\vartheta\Vert_{L^\infty}\int_{\mathbb{R}_+^N}W_{\varepsilon-1,\beta}\lvert\nabla\vartheta\rvert^2 dx
+C\int_{\mathbb{R}_+^N}W_{\varepsilon,\beta}G(x_1)^{-3}\lvert\nabla\vartheta\rvert^2dx
+C\int_{\mathbb{R}_+^N}W_{\varepsilon,\beta}G(x_1)^{-4}\lvert\nabla\vartheta\rvert\lvert\vartheta\rvert dx\nonumber\\
\leq &C(N_{\lambda,\beta}(T)+\phi_b\beta)\Vert\nabla\vartheta\Vert_{\varepsilon-1,\beta}^2+C\phi_b\beta^3\Vert\vartheta\Vert_{\varepsilon-3,\beta}^2\nonumber\\
\leq &C\delta(\beta^3\Vert\vartheta\Vert_{\varepsilon-3,\beta}^2+\beta\Vert\nabla\vartheta\Vert_{\varepsilon-1,\beta}^2),
\end{align}
where we have used the following two facts:
\begin{align*}
\int_{\mathbb{R}_+^N}W_{\varepsilon,\beta}G(x_1)^{-3}\lvert\nabla\vartheta\rvert^2dx
&\leq C\phi_b\int_{\mathbb{R}_+^N}W_{\varepsilon,\beta}G(x_1)^{-1}\lvert\nabla\vartheta\rvert^2dx\\
&\leq C\phi_b\beta\Vert\nabla\vartheta\Vert_{\varepsilon-1,\beta}^2
\end{align*}
and
\begin{align*}
\int_{\mathbb{R}_+^N}W_{\varepsilon,\beta}G(x_1)^{-4}\lvert\nabla\vartheta\rvert\lvert\vartheta\rvert dx
&\leq C\phi_b\int_{\mathbb{R}_+^N}W_{\varepsilon,\beta}G(x_1)^{-2}\lvert\nabla\vartheta\rvert\lvert\vartheta\rvert dx\\
&\leq C\phi_b\int_{\mathbb{R}_+^N}W_{\varepsilon,\beta}G(x_1)^{-3}\lvert\vartheta\rvert dx+C\phi_b\int_{\mathbb{R}_+^N}W_{\varepsilon,\beta}G(x_1)^{-1}\lvert\nabla\vartheta\rvert dx\\
&\leq C\phi_b\beta^3\Vert\vartheta\Vert_{\varepsilon-3,\beta}^2+C\phi_b\beta\Vert\nabla\vartheta\Vert_{\varepsilon-1,\beta}^2.
\end{align*}
Other terms are similar to $\eqref{C50}-\eqref{C51}$. Consequently, we estimate the last term of \eqref{C31}, that is
\begin{align}\label{C52}
\left\lvert\int_{\mathbb{R}_+^N}W_{\varepsilon,\beta}\mathcal{N}_1dx\right\rvert
\leq C\delta(\beta^3\Vert\vartheta\Vert_{\varepsilon-3,\beta}^2+\beta\Vert\nabla\vartheta\Vert_{\varepsilon-1,\beta}^2).
\end{align}

Substituting $\eqref{C33}-\eqref{C35}$, $\eqref{C49}$, and $\eqref{C52}$ into $\eqref{C31}$, we have
\begin{align}\label{C53}
\frac{d}{dt}\int_{\mathbb{R}_+^N}W_{\varepsilon,\beta}\left(e^{-\tilde{\phi}}\mathcal{E}_0+\mathcal{E}_1+\frac{1}{2}\tilde{n}^2\varphi^2\right)dx+\beta^3\Vert\vartheta\Vert_{\varepsilon-1,\beta}^2+\beta\Vert(\nabla\varphi, {\rm div}\psi, \nabla\zeta, \nabla\sigma)\Vert_{\varepsilon-1,\beta}^2
\leq C\delta\beta\Vert\nabla\psi\Vert_{\varepsilon-1,\beta}^2,
\end{align}
provided that $\delta>0$ is sufficiently small, where $\mathcal{E}_0$ and $\mathcal{E}_1$ are defined in \eqref{C20} and \eqref{C25} respectively. Multiply \eqref{C53} by $(1+\beta\tau)^\xi$ and integrate over $(0,t)$ to complete the proof of Lemma \ref{Clem3}.
\end{proof}

\begin{lem}\label{Clem4}
Under the same assumption as in Proposition \ref{CProp1}, it holds for any $\xi\geq0$ and $t\in [0,T]$ that
\begin{align}\label{C103}
(1+&\beta t)^\xi\Vert\vartheta(t)\Vert_{\varepsilon,\beta,1}^2+\int_0^t(1+\beta\tau)^\xi\left\{\beta^3\Vert\vartheta(\tau)\Vert_{\varepsilon-3,\beta}^2
+\beta\Vert\nabla\vartheta(\tau)\Vert_{\varepsilon-1,\beta}^2\right\}d\tau\nonumber\\
\leq &C\Vert\vartheta_0\Vert_{\varepsilon,\beta,1}^2+C\xi\int_0^t(1+\beta\tau)^{\xi-1}\beta\Vert\vartheta(\tau)\Vert_{\varepsilon,\beta,1}^2d\tau\nonumber\\
&+C\int_0^t(1+\beta\tau)^\xi\beta\Vert\nabla\sigma(\tau)\Vert_{\varepsilon-1,\beta}^2d\tau+C\int_0^t(1+\beta \tau)^\xi\int_{\mathbb{R}^{N-1}}\sigma_{x_1}^2(\tau,0,x')dx'd\tau,
\end{align}
where positive constants $\delta$ and $C$ are independent of $T$.
\end{lem}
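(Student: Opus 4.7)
The strategy is to repeat the weighted energy method of Lemma~\ref{Clem3} on the once-differentiated system, in order to recover the full first-order dissipation $\Vert\nabla\vartheta\Vert_{\varepsilon-1,\beta}^2$ (the non-divergence part of $\nabla\psi$, in particular, is not controlled by \eqref{C102}). We accept as right-hand-side errors the bulk $\beta\Vert\nabla\sigma\Vert_{\varepsilon-1,\beta}^2$ (which is itself produced by Lemma~\ref{Clem3} and hence innocuous) and the boundary contribution $\int_{\mathbb{R}^{N-1}}\sigma_{x_1}^2(t,0,x')\,dx'$ generated by the electrostatic coupling at $\{x_1=0\}$; the latter is absorbed subsequently in Lemma~\ref{Clem5}.

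First, I would apply $\partial_{x_i}$ to $\eqref{C15}_1$--$\eqref{C15}_3$ for each $i=1,\dots,N$, multiply the three differentiated equations respectively by $RT\tilde n\,\partial_{x_i}\varphi$, $\tilde n\,\partial_{x_i}\psi$ and $\frac{R}{(\gamma-1)T}\tilde n\,\partial_{x_i}\zeta$, and sum over $i$. By the symmetrization used in the derivation of \eqref{C19}, \eqref{C24} and \eqref{C30}, this yields
\[
(\mathcal E^{(1)})_t + {\rm div}\,\mathcal H^{(1)} + \mathcal D^{(1)} - \tilde n\,\partial_{x_i}\psi\cdot\nabla\partial_{x_i}\sigma = \mathcal R^{(1)},
\]
with energy density $\mathcal E^{(1)}\sim \tilde n\bigl(RT|\nabla\varphi|^2 + m|\nabla\psi|^2 + \tfrac{R}{(\gamma-1)T}|\nabla\zeta|^2\bigr)$. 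Multiplying by $W_{\varepsilon,\beta}$ and integrating over $\mathbb{R}_+^N$, the derivative of the weight produces the dissipation $c\beta\Vert\nabla\vartheta\Vert_{\varepsilon-1,\beta}^2$, whose positivity is supplied by the Bohm criterion exactly as in the bound for $I_3$, while the corresponding boundary integral at $\{x_1=0\}$ is nonnegative by Bohm and in particular dominates $\int_{x_1=0}|\nabla\psi(t,0,x')|^2\,dx'$ up to $O(\delta)$. The stationary-profile contribution, combined with the Sagdeev identity \eqref{C28} exactly as in the derivation of $I_6$, supplies the additional dissipation $c\beta^3\Vert\vartheta\Vert_{\varepsilon-3,\beta}^2$, and the nonlinear remainder $\mathcal R^{(1)}$ is bounded by $C\delta\bigl(\beta^3\Vert\vartheta\Vert_{\varepsilon-3,\beta}^2 + \beta\Vert\nabla\vartheta\Vert_{\varepsilon-1,\beta}^2\bigr)$ along the lines of \eqref{C50}--\eqref{C52}, and absorbed.

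The Poisson coupling $\int W_{\varepsilon,\beta}\tilde n\,\partial_{x_i}\psi\cdot\nabla\partial_{x_i}\sigma\,dx$ is then treated by integration by parts. Its bulk residue is controlled by Young's inequality by $\eta\,\beta\Vert\nabla\vartheta\Vert_{\varepsilon-1,\beta}^2 + C_\eta\,\beta\Vert\nabla\sigma\Vert_{\varepsilon-1,\beta}^2$, the first piece absorbed and the second producing the $\Vert\nabla\sigma\Vert$-term on the right of \eqref{C103}. The surface contribution at $\{x_1=0\}$ vanishes for $i\geq 2$ because $\sigma(t,0,x')=0$ forces $\partial_{x_i}\sigma|_{x_1=0}=0$ for such $i$; only $i=1$ survives and equals $-\int_{x_1=0}\tilde n(0)\sigma_{x_1}\partial_{x_1}\psi_1\,dx'$. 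Splitting via Young's inequality, $|\tilde n(0)\sigma_{x_1}\partial_{x_1}\psi_1|\leq \eta(\partial_{x_1}\psi_1)^2 + C_\eta\sigma_{x_1}^2$, the $\eta$-piece is absorbed into the positive Bohm-flux at the boundary and the $\sigma_{x_1}^2$-piece is retained, giving the last term on the right of \eqref{C103}. Multiplying the resulting differential inequality by $(1+\beta\tau)^\xi$ and integrating in $\tau\in[0,t]$ produces the factor $C\xi\int_0^t(1+\beta\tau)^{\xi-1}\beta\Vert\vartheta\Vert_{\varepsilon,\beta,1}^2\,d\tau$ from the time-weight derivative; combining with Lemma~\ref{Clem3} restores the zero-order component on the left of \eqref{C103}.

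\textbf{Main obstacle.} The delicate step is the boundary cross-term $\int_{x_1=0}\tilde n\sigma_{x_1}\partial_{x_1}\psi_1\,dx'$ produced by the $\nabla\sigma$ coupling after differentiation: it cannot be closed inside the present lemma. Its $(\partial_{x_1}\psi_1)^2$-half must be absorbed into the positive boundary flux granted by the Bohm criterion—which requires a careful tracking of the boundary structure of the symmetrized first-derivative energy along the lines of $I_4$—while the $\sigma_{x_1}^2$-half is irreducibly retained as the final right-hand-side term in \eqref{C103}, to be controlled only later, in Lemma~\ref{Clem5}, via the $e^{-v}\partial_t$-differentiation of $\eqref{C15}_4$ anticipated near \eqref{C65}.
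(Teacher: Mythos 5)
Your proposal is correct and follows essentially the same route as the paper's proof. The paper forms $\nabla(RT\,\eqref{C15}_1)$, $\nabla\,\eqref{C15}_2$, $\nabla(\tfrac{R}{(\gamma-1)T}\eqref{C15}_3)$ paired against $\tilde n\nabla\varphi$, $\tilde n\nabla\psi$, $\tilde n\nabla\zeta$ (you differentiate first and then multiply by the symmetrizing coefficients — a lower-order commutator difference only), adds the $e^{-\tilde\phi}$-weighted zero-order identity from Lemma~\ref{Clem3} to recover the $\beta^3\Vert\vartheta\Vert_{\varepsilon-3,\beta}^2$ dissipation via the Sagdeev identity, obtains the full $\beta\Vert\nabla\vartheta\Vert_{\varepsilon-1,\beta}^2$ dissipation from the weight-derivative/Bohm structure, and isolates exactly the two error terms you identify: the bulk $\beta\Vert\nabla\sigma\Vert_{\varepsilon-1,\beta}^2$ from the Poisson coupling and the boundary $\int_{\mathbb{R}^{N-1}}\sigma_{x_1}^2(t,0,x')\,dx'$ coming from the cross-term in the flux $\widetilde{\mathcal H}_1$ (only the $i=1$ piece survives at $x_1=0$, by the Dirichlet condition on $\sigma$, exactly as you argue), the latter then deferred to Lemma~\ref{Clem5}.
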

\begin{proof}
Multiplying $\nabla(RT\eqref{C15}_1)$, $\nabla\eqref{C15}_2$, and $\nabla\left(\frac{R}{(\gamma-1)T}\eqref{C15}_3\right)$ by $\tilde{n}\nabla\varphi$, $\tilde{n}\nabla\psi$, and $\tilde{n}\nabla\zeta$ respectively, one has that
\begin{align}\label{C54}
(\widetilde{\mathcal{E}}_1)_t+{\rm div}\widetilde{\mathcal{H}}_1-\tilde{n}{\rm div}\psi\Delta\sigma=\widetilde{\mathcal{R}}_1,
\end{align}
where we have denoted
\begin{align}\label{C55}
\widetilde{\mathcal{E}}_1=\frac{\tilde{n}}{2}RT\lvert\nabla\varphi\rvert^2+\frac{\tilde{n}}{2}m\lvert\nabla\psi\rvert^2+\frac{\tilde{n}R}{2(\gamma-1)T}\lvert\nabla\zeta\rvert^2,
\end{align}
\begin{align}\label{C56}
\widetilde{\mathcal{H}}_1=\frac{\tilde{n}}{2}RTu\lvert\nabla\varphi\rvert^2+\tilde{n}RT\nabla\varphi\cdot\nabla\psi+\frac{\tilde{n}}{2}mu\lvert \nabla\psi\rvert^2+R\tilde{n}\nabla\psi\cdot\nabla\zeta+\frac{R\tilde{n}u}{2(\gamma-1)T}\lvert\nabla\zeta\rvert^2+\tilde{n}{\rm div}\psi\nabla\sigma-\tilde{n}\nabla\psi\cdot\nabla\sigma,
\end{align}
and
\begin{align}\label{C57}
\widetilde{\mathcal{R}}_1=&\left[\frac{\tilde{n}R}{2}\zeta_t+\frac{RTu_1}{2}\tilde{n}_{x_1}+\frac{\tilde{n}R}{2}{\rm div}(Tu)\right]\lvert\nabla\varphi\rvert^2
+\left[\frac{mu_1}{2}\tilde{n}_{x_1}+\frac{m\tilde{n}}{2}{\rm div}u\right]\lvert\nabla\psi\rvert^2\nonumber\\
&+\left[\frac{R\tilde{n}}{2(\gamma-1)T}{\rm div}u+\frac{Ru_1}{2(\gamma-1)T}\tilde{n}_{x_1}-\frac{R\tilde{n}u}{2(\gamma-1)T^2}\nabla T-
\frac{R\tilde{n}}{2(\gamma-1)T^2}\zeta_t-\frac{R\tilde{n}}{T}\tilde{u}_{x_1}\right]\lvert\nabla\zeta\rvert^2\nonumber\\
&-\tilde{n}\tilde{u}_{x_1}\left[RT(\partial_{x_1}\varphi)^2+m\lvert\nabla\psi_1\rvert^2+m(\partial_{x_1}\psi)^2+\frac{R}{(\gamma-1)T}\lvert\partial_{x_1}\zeta\rvert^2+
\frac{R}{T}\lvert\nabla\zeta\rvert^2\right]\nonumber\\
&+\tilde{n}_{x_1}(\partial_{x_1}\sigma{\rm div}\psi-\nabla\psi_1\cdot\nabla\sigma)\nonumber\\
&-\tilde{n}\tilde{T}_{x_1}\left[Ru\partial_{x_1}\varphi\cdot\nabla\varphi-R\nabla\psi_1\cdot\nabla\varphi+R\nabla\varphi\cdot\partial_{x_1}\psi-\frac{Ru}{(\gamma-1)T^2}
\partial_{x_1}\zeta\cdot\nabla\zeta+\frac{R}{(\gamma-1)T}\nabla\psi_1\cdot\nabla\zeta\right]\nonumber\\
&-R\tilde{n}T\sum_{i,j=1}^3\partial_{x_i}\varphi\partial_{x_j}\varphi\partial_{x_i}\psi_j-R\tilde{n}(u\cdot\nabla\varphi)(\nabla\varphi\cdot\nabla\zeta)
-m\tilde{n}\sum_{i,j,l=1}^3\partial_{x_j}\psi_i\partial_{x_i}\psi_l\partial_{x_j}\psi_l+R\tilde{n}\nabla\varphi\cdot\nabla\psi\cdot\nabla\zeta\nonumber\\
&-R\tilde{n}\sum_{i,j=1}^3\partial_{x_j}\zeta\partial_{x_i}\varphi\partial_{x_j}\zeta
-\frac{R\tilde{n}}{(\gamma-1)T}\nabla\zeta\cdot\nabla\psi\cdot\nabla\zeta+\frac{R\tilde{n}}{(\gamma-1)T^2}u\cdot\nabla\zeta\lvert\nabla\zeta\rvert^2-R\tilde{n}\nabla T({\rm div}\psi+\varphi_t)\nabla\varphi\nonumber\\
&+\frac{R\tilde{n}}{(\gamma-1)T^2}\nabla T\cdot\nabla\zeta\zeta_t-R\tilde{n}_{x_1}\psi_1\nabla\varphi\cdot
\nabla T+\frac{R\tilde{n}\tilde{T}_{x_1}}{(\gamma-1)T^2}\psi_1\nabla \zeta\cdot\nabla T+\frac{R\tilde{n}\tilde{u}_{x_1}}{(\gamma-1)T^2}\zeta\nabla \zeta\cdot\nabla T\nonumber\\
&-R\tilde{n}\tilde{v}_{x_1x_1}(T\psi_1\partial_{x_1}\varphi+\zeta\partial_{x_1}\psi_1)-\tilde{n}\tilde{u}_{x_1x_1}\left(m\psi_1\partial_{x_1}\psi_1+\frac{R\tilde{n}}{T}\zeta
\partial_{x_1}\zeta\right)-\frac{R}{(\gamma-1)T}\tilde{T}_{x_1x_1}\psi_1\partial_{x_1}\zeta.
\end{align}
Then we multiply \eqref{C19} by $e^{-\tilde{\phi}}$ and add the result to \eqref{C54} with the help of \eqref{C29} to get
\begin{align}\label{C58}
&\left(e^{-\tilde{\phi}}\mathcal{E}_0+\widetilde{\mathcal{E}}_1+\frac{1}{2}\tilde{n}^2\varphi^2\right)_t+{\rm div}\left(e^{-\tilde{\phi}}\mathcal{H}_0+\widetilde{\mathcal{H}}_1+\frac{\tilde{n}^2u\varphi^2}{2}\right)+e^{-\tilde{\phi}}\tilde{\phi}_{x_1}(\mathcal{H}_0)_1
+e^{-\tilde{\phi}}\mathcal{D}_0\nonumber\\
&-\left(\frac{1}{2}\tilde{n}^2\tilde{u}_{x_1}+\tilde{n}u_1\tilde{n}_{x_1}\right)\varphi^2+\tilde{n}^2\tilde{v}_{x_1}\varphi\psi_1\nonumber\\
=&e^{-\tilde{\phi}}\mathcal{R}_0+\widetilde{\mathcal{R}}_1+\frac{1}{2}\tilde{n}^2{\rm div}\psi\varphi^2+\frac{\tilde{n}}{2}\left(\tilde{n}e^{\theta_1\varphi}\varphi^2
-e^{-(\theta_2\sigma+\tilde{\phi})}\sigma^2\right){\rm div}\psi
:=\widetilde{\mathcal{N}}_1.
\end{align}

Multiply \eqref{C58} by $W_{\varepsilon,\beta}=(1+\beta x_1)^\varepsilon$ and integrating the resulting equation over $\mathbb{R}_+^N$ to obtain
\begin{align}\label{C59}
\frac{d}{dt}&\int_{\mathbb{R}_+^N}W_{\varepsilon,\beta}\left(e^{-\tilde{\phi}}\mathcal{E}_0+\widetilde{\mathcal{E}}_1+\frac{1}{2}\tilde{n}^2\varphi^2\right)dx\nonumber\\
&+I_1+I_2+I_5
+\underbrace{\int_{\mathbb{R}_+^N}\varepsilon\beta W_{\varepsilon-1,\beta}(-\widetilde{\mathcal{H}}_1)_1dx}_{I_7}+\underbrace{\int_{x_1=0}(-\widetilde{\mathcal{H}}_1)_1dx'}_{I_8}\nonumber\\
=&\int_{\mathbb{R}_+^N}W_{\varepsilon,\beta}\widetilde{\mathcal{N}}_1dx,
\end{align}
where $(\widetilde{\mathcal{H}}_1)_1$ also denotes the first component of $\widetilde{\mathcal{H}}_1$ and $I_1, I_2, I_5$ are defined in  \eqref{C31}.

It follows from the same computations in \eqref{C32} and the Cauchy-Schwarz inequality that
\begin{align}\label{C60}
I_7+I_8\geq c\beta\Vert\nabla\vartheta\Vert_{\varepsilon-1,\beta}^2-C\beta\Vert\nabla\sigma\Vert_{\varepsilon-1,\beta}^2-C\int_{\mathbb{R}^{N-1}}\sigma_{x_1}^2(t,0,x')dx'.
\end{align}
In a manner similar to the derivation of \eqref{C52}, we get
\begin{align}\label{C61}
\left\lvert\int_{\mathbb{R}_+^N}W_{\varepsilon,\beta}\widetilde{\mathcal{N}}_1dx\right\rvert
\leq C\delta(\beta^3\Vert\vartheta\Vert_{\varepsilon-3,\beta}^2+\beta\Vert(\nabla\vartheta, \nabla\sigma)\Vert_{\varepsilon-1,\beta}^2).
\end{align}
Substituting $\eqref{C33}$, $\eqref{C49}$, $\eqref{C60}$, and $\eqref{C61}$  into $\eqref{C59}$, we have
\begin{align}\label{C62}
&\frac{d}{dt}\int_{\mathbb{R}_+^N}W_{\varepsilon,\beta}\left(e^{-\tilde{\phi}}\mathcal{E}_0+\widetilde{\mathcal{E}}_1+\frac{1}{2}\tilde{n}^2\varphi^2\right)dx
+\beta^3\Vert\vartheta\Vert_{\varepsilon-3,\beta}^2+\beta\Vert\nabla\vartheta\Vert_{\varepsilon-1,\beta}^2\nonumber\\
\leq& C\delta\beta\Vert\nabla\sigma\Vert_{\varepsilon-1,\beta}^2+C\int_{\mathbb{R}^{N-1}}\sigma_{x_1}^2(t,0,x')dx',
\end{align}
provided that $\delta>0$ is sufficiently small, where $\mathcal{E}_0$ and $\widetilde{\mathcal{E}}_1$ are defined in \eqref{C20} and \eqref{C55} respectively. Multiply \eqref{C62} by $(1+\beta\tau)^\xi$ and integrate over $(0,t)$ to give the desired estimate \eqref{C103}.
\end{proof}

\begin{lem}\label{Clem5}
Under the same assumption as in Proposition \ref{CProp1}, it holds for any $\xi\geq0$ and $t\in [0,T]$ that
\begin{align}\label{C104}
(1+&\beta t)^\xi\Vert\vartheta(t)\Vert_{\varepsilon,\beta}^2+\int_0^t(1+\beta \tau)^\xi\int_{\mathbb{R}^{N-1}}\sigma_{x_1}^2(\tau,0,x')dx'd\tau\nonumber\\
\leq &C\Vert\vartheta_0\Vert_{\varepsilon,\beta,1}^2+C\xi\int_0^t(1+\beta\tau)^{\xi-1}\beta\Vert\vartheta(\tau)\Vert_{\varepsilon,\beta,1}^2d\tau\nonumber\\
&+C\int_0^t(1+\beta\tau)^\xi\left\{\beta^3\Vert\vartheta(\tau)\Vert_{\varepsilon-3,\beta}^2+\beta\Vert(\nabla\varphi,{\rm div}\psi,\nabla\zeta,\nabla\sigma)(\tau)\Vert_{\varepsilon-1,\beta}^2\right\}d\tau,
\end{align}
where positive constants $\delta$ and $C$ are independent of $T$.
\end{lem}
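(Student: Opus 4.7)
The plan is to combine two energy identities. The first is the weighted zeroth-order $L^2$ energy identity for $\vartheta$, obtained exactly as in the derivation of Lemma \ref{Clem3} but without applying a spatial derivative: I would multiply $RT\eqref{C15}_1$, $\eqref{C15}_2$, and $\frac{R}{(\gamma-1)T}\eqref{C15}_3$ by $\tilde n\varphi$, $\tilde n\psi$, $\tilde n\zeta$, assemble the identity \eqref{C19}, weight by $e^{-\tilde\phi}W_{\varepsilon,\beta}$, and integrate over $\mathbb R_+^N$. Estimates \eqref{C32}--\eqref{C49} of Lemma \ref{Clem3} (positive boundary contribution $I_2\ge 0$, the coercive quadratic form $I_6$ from \eqref{C39}--\eqref{C47}, and the cubic remainder \eqref{C50}) are reused verbatim: they produce $\frac{d}{dt}\|\vartheta\|_{\varepsilon,\beta}^2$ controlled above by $C\delta(\beta^3\|\vartheta\|_{\varepsilon-3,\beta}^2+\beta\|\nabla\psi\|_{\varepsilon-1,\beta}^2)$ plus the surviving problematic coupling $\int W_{\varepsilon,\beta}\tilde n\,\sigma\,\mathrm{div}\,\psi\,dx$ from the $\tilde n\,{\rm div}\,\psi\,\sigma$ term in \eqref{C19}. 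Unlike Lemmas \ref{Clem3}--\ref{Clem4}, no $\nabla$ has been applied, so this coupling cannot be converted into $\nabla\sigma$ dissipation.

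To close the identity and, at the same time, capture the boundary term, I would carry out the key observation sketched in the introduction. Apply $e^{-v}\partial_t$ to $\eqref{C15}_4$ to get $e^{-v}\partial_t\Delta\sigma=\varphi_t+e^{-v-\tilde\phi-\sigma}\sigma_t$, substitute $\varphi_t$ from $\eqref{C15}_1$, then multiply by $W_{\varepsilon,\beta}\tilde n\sigma$ and integrate over $\mathbb R_+^N$. The $u\cdot\nabla\varphi$ piece, after integration by parts, produces the critical term $\int W_{\varepsilon,\beta}\tilde n\sigma\,\tilde u\,(e^{\varphi}-1)_{x_1}dx$. Rewriting, via $\eqref{C15}_4$,
\[
\tilde u\,\sigma_{x_1}(e^{\varphi}-1)=\tilde u\,\sigma_{x_1}\bigl\{e^{-\tilde v}\Delta\sigma+e^{-\tilde v-\tilde\phi}(e^{-\sigma}-1)\bigr\},
\]
and integrating by parts one more time in $x_1$, the $\Delta\sigma$ contribution is split into a positive interior quadratic in $\sigma_{x_1}$ (absorbable by $\beta\|\nabla\sigma\|_{\varepsilon-1,\beta}^2$) and, because $\sigma(t,0,x')=0$, the clean boundary term $-\tfrac12\tilde u(0)\tilde n(0)e^{-\tilde v(0)}\int_{\mathbb R^{N-1}}\sigma_{x_1}^2(t,0,x')dx'$, strictly positive by the Bohm criterion \eqref{C94}. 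The $\mathrm{div}\,\psi$ piece coming from $\eqref{C15}_1$ in this second identity coincides, up to lower order corrections handled by \eqref{C13} and Lemma \ref{Clem14}, with the problematic coupling of the first identity and cancels it.

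Adding both identities and absorbing cubic remainders by the smallness of $\delta$ and $\phi_b$ through \eqref{C97}--\eqref{C101}, I obtain a differential inequality of the form
\[
\frac{d}{dt}\mathcal{E}(t)+c\int_{\mathbb R^{N-1}}\sigma_{x_1}^2(t,0,x')\,dx'\le C\bigl(\beta^3\|\vartheta\|_{\varepsilon-3,\beta}^2+\beta\|(\nabla\varphi,\mathrm{div}\,\psi,\nabla\zeta,\nabla\sigma)\|_{\varepsilon-1,\beta}^2\bigr),
\]
with $\mathcal{E}(t)$ equivalent to $\|\vartheta(t)\|_{\varepsilon,\beta}^2$. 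Multiplying by $(1+\beta\tau)^\xi$, integrating on $[0,t]$, and handling the weight derivative via $\frac{d}{d\tau}[(1+\beta\tau)^\xi\mathcal{E}]=(1+\beta\tau)^\xi\mathcal{E}'+\xi\beta(1+\beta\tau)^{\xi-1}\mathcal{E}$ produces the factor $\xi\beta\|\vartheta\|_{\varepsilon,\beta,1}^2$ on the right-hand side of \eqref{C104}, completing the proof.

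The main obstacle is the sign bookkeeping for the boundary contribution and the verification that the $\sigma\,\mathrm{div}\,\psi$ couplings from the two identities cancel rather than compound. This relies on the correct choice of multiplier $e^{-v}\partial_t$ (not merely $\partial_t$), on the monotonicity and decay \eqref{C13} of the planar stationary solution, and on the strict inequality $\tilde u(0)<0$ guaranteed by \eqref{C94}; the strategy mirrors \cite{MS2011,NOS2012} but must be adapted here to the full system in $N$ variables.
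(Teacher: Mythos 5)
Your proposal captures essentially the same strategy the paper uses: apply $e^{-v}\partial_t$ to $\eqref{C15}_4$, pair the resulting identity (multiplied by $\sigma$ and weighted by $W_{\varepsilon,\beta}$) with a zeroth-order hyperbolic $\mathcal E_0$-type $L^2$ energy so that the $\sigma\,\mathrm{div}\,\psi$ couplings cancel, and recover the positive boundary dissipation from the term $\tilde u\sigma_{x_1}(e^\varphi-1)$ by re-inserting $\eqref{C15}_4$ and integrating by parts in $x_1$ (this is precisely the paper's key identity \eqref{C65}, leading to \eqref{C66}). The one step your sketch leaves implicit is the verification, carried out in the paper's estimate \eqref{C73} by completing squares under the degenerate Bohm relation $m u_\infty^2=\gamma RT_\infty+1$, that the residual zeroth-order \emph{quadratic} (not cubic) form in $(\varphi,\psi_1,\zeta,\sigma)$ inside $I_9$ is nonnegative and hence drops out; this is what allows the interior weight terms in this lemma merely to be bounded by the right-hand side of \eqref{C104} rather than coerced from below as in Lemma \ref{Clem3}.
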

\begin{proof}
Multiplying $e^{-v}\eqref{C15}_{4t}$ by $\sigma$, using $\eqref{C15}_1-\eqref{C15}_3$, one has
\begin{align}\label{C63}
&\left(\frac{1}{2}e^{-\phi-v}\lvert\sigma\rvert^2+\frac{1}{2}e^{-v}\lvert\nabla\sigma\rvert^2+\frac{\mathcal{E}_0}{\tilde{n}}\right)_{t}
+{\rm div}\left(\frac{\mathcal{H}_0}{\tilde{n}}-e^{-v}\nabla\sigma_t\sigma-\varphi\psi\sigma-\tilde{U}\varphi\sigma\right)\nonumber\\
&=\frac{\mathcal{R}_0}{\tilde{n}}-\frac{e^{-v}}{2}\left[\varphi_t\lvert\nabla\sigma\rvert^2+e^{-\phi}(\varphi_t+\sigma_t)\sigma^2\right]
+e^{-v}\sigma\nabla\varphi\cdot\nabla\sigma_t-{\rm div}\psi\sigma\varphi-\varphi\psi\cdot\nabla\sigma\nonumber\\
&\quad+\tilde{v}_{x_1}(e^{-v}\sigma\sigma_{tx_1}-R\zeta\psi_1-RT\varphi\psi_1+\psi_1\sigma)+\tilde{u}_{x_1}\left[\frac{RT}{2}\varphi^2
+\left(\frac{R}{2(\gamma-1)T}-\frac{R}{T}\right)\zeta^2+\frac{m}{2}
\psi^2-m\psi_1^2-\sigma\varphi\right]\nonumber\\
&\quad+\tilde{T}_{x_1}\left[\frac{Ru_1}{2}\varphi^2+R\varphi\psi_1-\frac{R}{(\gamma-1)T}\psi_1\zeta-\frac{Ru_1}{(\gamma-1)T^2}\zeta^2\right]
+\tilde{u}\sigma_{x_1}(e^{\varphi}-1-\varphi)-\tilde{u}\sigma_{x_1}(e^{\varphi}-1)\nonumber\\
&=:\mathcal{N}_2-\tilde{u}\sigma_{x_1}(e^{\varphi}-1),
\end{align}
where $\mathcal{E}_0$, $\mathcal{H}_0$, and $\mathcal{R}_0$ are defined in \eqref{C20}, \eqref{C21}, and \eqref{C23} respectively.
Then, we multiply \eqref{C63} by $W_{\varepsilon,\beta}=(1+\beta x_1)^\varepsilon$ and integrating the resulting equation over $\mathbb{R}_+^N$ to get
\begin{align}\label{C64}
\frac{d}{dt}&\int_{\mathbb{R}_+^N}W_{\varepsilon,\beta}\left(\frac{1}{2}e^{-\phi-v}\lvert\sigma\rvert^2+\frac{1}{2}e^{-v}\lvert\nabla\sigma\rvert^2+
\frac{\mathcal{E}_0}{\tilde{n}}\right)dx\nonumber\\
&+\underbrace{\int_{\mathbb{R}_+^N}\varepsilon\beta W_{\varepsilon-1,\beta}\left\{\frac{-R\tilde{T}\tilde{u}}{2}\varphi^2-R\tilde{T}\varphi\psi_1+\frac{-m\tilde{u}}{2}\lvert\psi\rvert^2-R\psi_1\zeta+\frac{-R\tilde{u}}{2(\gamma-1)T}\zeta^2
+\sigma\psi_1-\frac{-\tilde{u}}{2}e^{-\tilde{v}-\tilde{\phi}}\sigma^2\right.}_{I_9}\nonumber\\
&\underbrace{\left.+\tilde{u}\varphi\sigma+e^{-v}\sigma\sigma_{tx_1}+\frac{-\tilde{u}e^{-\tilde{v}}}{2}\left(2\sigma_{x_1}^2-\lvert\nabla\sigma\rvert^2\right)\right\}dx}_{I_9}
+\underbrace{\int_{x_1=0}\left\{\left(\frac{-\mathcal{H}_0}{\tilde{n}}\right)_1+\frac{-\tilde{u}e^{-\tilde{v}}}{2}\sigma_{x_1}^2\right\}dx'}_{I_{10}}\nonumber\\
=&\int_{\mathbb{R}_+^N}W_{\varepsilon,\beta}\left\{\mathcal{N}_2-\frac{\partial_{x_1}(\tilde{u}e^{-\tilde{v}})}{2}(\lvert\nabla\sigma\rvert^2-2\sigma_{x_1}^2)
-\tilde{u}\sigma_{x_1}\frac{e^{-\tilde{\phi}}}{\tilde{n}}(e^{-\sigma}-1+\sigma)-\frac{\partial_{x_1}(\tilde{u}e^{-\tilde{v}-\tilde{\phi}})}{2}
\sigma^2\right\}dx\nonumber\\
&+\int_{\mathbb{R}_+^N}\varepsilon\beta W_{\varepsilon-1,\beta}\left\{\frac{R(Tu_1-\tilde{T}\tilde{u})}{2}\varphi^2+R\zeta\varphi\psi_1+
\frac{m\psi_1}{2}\psi^2+\frac{R\zeta^2}{2(\gamma-1)}\left(\frac{u_1}{T}-\frac{\tilde{u}}{\tilde{T}}\right)+\varphi\psi_1\sigma\right\}dx,
\end{align}
where we have used the boundary condition $\sigma(t,0)=0$ and the following key fact:
\begin{align}\label{C65}
\int_{\mathbb{R}_+^N}&W_{\varepsilon,\beta}\tilde{u}\sigma_{x_1}(e^{\varphi}-1)dx\nonumber\\
=&\int_{\mathbb{R}^{N-1}}\frac{-\tilde{u}e^{-\tilde{v}}}{2}\sigma_{x_1}^2dx'+\int_{\mathbb{R}_+^N}\varepsilon\beta W_{\varepsilon-1,\beta}\left\{\frac{-\tilde{u}e^{-\tilde{v}}}{2}\left(2\sigma_{x_1}^2-\lvert\nabla\sigma\rvert^2\right)-\frac{-\tilde{u}}{2}e^{-\tilde{v}-\tilde{\phi}}\sigma^2
\right\}dx\nonumber\\
&+\int_{\mathbb{R}_+^N}W_{\varepsilon,\beta}\left\{\frac{\partial_{x_1}(\tilde{u}e^{-\tilde{v}})}{2}\left(\lvert\nabla\sigma\rvert^2-2\sigma_{x_1}^2\right)
+\tilde{u}\sigma_{x_1}\frac{e^{-\tilde{\phi}}}{\tilde{n}}(e^{-\sigma}-1+\sigma)+\frac{\partial_{x_1}(\tilde{u}e^{-\tilde{v}-\tilde{\phi}})}{2}\sigma^2\right\}dx.
\end{align}
We note that the derivation of the equality \eqref{C65} mainly  uses $\eqref{C15}_4$ and integration by parts.

Similarly to \eqref{C32}, $I_{10}$ can be estimated as
\begin{align}\label{C66}
I_{10}\geq c\int_{\mathbb{R}^{N-1}}\sigma_{x_1}^2(t,0,x')dx'.
\end{align}

Now let's focus on the estimate of $I_9$. From \eqref{C28}, \eqref{C13},  integration by parts, the eighth term in $I_9$ is given by
\begin{align}\label{C67}
\int_{\mathbb{R}^N_+}\varepsilon\beta W_{\varepsilon-1,\beta}\tilde{u}\varphi\sigma dx&\geq\int_{\mathbb{R}^N_+}\varepsilon\beta W_{\varepsilon-1,\beta}\frac{-\tilde{u}}{\tilde{n}}
\left(-\Delta\sigma+e^{-\phi}\sigma\right)\sigma dx-CN_{\lambda,\beta}(T)\Vert(\varphi,\sigma)\Vert_{\varepsilon-3,\beta}^2\nonumber\\
&\geq \int_{\mathbb{R}^N_+}\varepsilon\beta W_{\varepsilon-1,\beta}\frac{-\tilde{u}}{\tilde{n}}
\left(e^{-\tilde{\phi}}\sigma^2+\lvert\nabla\sigma\rvert^2\right) dx-C(N_{\lambda,\beta}(T)+\beta^3)\Vert\varphi\Vert_{\varepsilon-3,\beta}^2.
\end{align}
Also the ninth term in $I_9$ can be rewritten as
\begin{align}\label{C68}
\int_{\mathbb{R}^N_+}\varepsilon\beta W_{\varepsilon-1,\beta}e^{v}\sigma \sigma_{x_1t}dx
=&-\int_{\mathbb{R}^N_+}\varepsilon\beta W_{\varepsilon-1,\beta}e^{-v}\sigma_t\sigma_{x_1}dx-\int_{\mathbb{R}^N_+}\varepsilon(\varepsilon-1)\beta^2 W_{\varepsilon-2,\beta}e^{-v}\sigma_{t}\sigma dx\nonumber\\
&+\int_{\mathbb{R}^N_+}\varepsilon\beta W_{\varepsilon-1,\beta}v_xe^{-v}\sigma\sigma_tdx,
\end{align}
then utilizing the Sobolev inequality, the Cauchy-Schwarz inequality and \eqref{C13}, we get
\begin{align}\label{C69}
\left\lvert\int_{\mathbb{R}^N_+}\varepsilon\beta W_{\varepsilon-1,\beta}e^{v}\sigma_{x_1t}\sigma dx\right\rvert\leq C\beta\Vert(\sigma_t,\sigma_{x_1})\Vert_{\varepsilon-1,\beta}^2+C\beta^3\Vert\sigma\Vert_{\varepsilon-3,\beta}^2.
\end{align}

Substituting \eqref{C67} and \eqref{C68} into $I_9$ with help of \eqref{C69} and \eqref{C93},  it follows that
\begin{align}\label{C73}
I_9\geq&\int_{\mathbb{R}_+^N}\varepsilon\beta W_{\varepsilon-1,\beta}\left\{\frac{-R\tilde{T}\tilde{u}}{2}\varphi^2-R\tilde{T}\varphi\psi_1+\frac{-m\tilde{u}}{2}\lvert\psi\rvert^2-R\psi_1\zeta+\frac{-R\tilde{u}}{2(\gamma-1)T}\zeta^2
+\sigma\psi_1+\frac{-\tilde{u}}{2}e^{-\tilde{v}-\tilde{\phi}}\sigma^2\right\}dx\nonumber\\
&+\int_{\mathbb{R}_+^N}\varepsilon\beta W_{\varepsilon-1,\beta}\left\{\frac{-\tilde{u}}{2\tilde{n}}\left(2\sigma_{x_1}^2+\lvert\nabla\sigma\rvert^2\right)\right\}dx
-C\beta\Vert(\sigma_t,\sigma_{x_1})\Vert_{\varepsilon-1,\beta}^2-C\beta^3\Vert\sigma\Vert_{\varepsilon-3,\beta}^2\nonumber\\
\geq&\int_{\mathbb{R}_+^N}\varepsilon\beta W_{\varepsilon-1,\beta}\left\{\frac{RT_\infty\lvert u_\infty\rvert}{2}\varphi^2-RT_\infty\varphi\psi_1+\frac{m\lvert u_\infty\rvert}{2}\psi^2-R\psi_1\zeta+\frac{R\lvert u_\infty\rvert}{2(\gamma-1)T_\infty}\zeta^2
+\sigma\psi_1+\frac{\lvert u_\infty\rvert}{2}\sigma^2\right\}dx\nonumber\\
&-C\beta\Vert(\sigma_t,\sigma_{x_1})\Vert_{\varepsilon-1,\beta}^2-C\beta^3\Vert(\vartheta,\sigma)\Vert_{\varepsilon-3,\beta}^2\nonumber\\
=&\int_{\mathbb{R}_+^N}\varepsilon\beta W_{\varepsilon-1,\beta}\Big\{\frac{RT_{\infty}|u_{\infty}|}{2}(\varphi-\frac{1}{|u_{\infty}|}\psi_1)^{2}
+\frac{(\gamma-1)RT_{\infty}}{2|u_{\infty}|}(\psi_1-\frac{|u_{\infty}|}{(\gamma-1)T_{\infty}}\zeta)^{2}+\frac{|u_{\infty}|}{2}(\sigma+\frac{1}{|u_{\infty}|}\psi_1)^{2}\nonumber\\&+\frac{m\lvert u_\infty\rvert}{2}\lvert\psi'\rvert^2\Big\}dx-C\beta\Vert(\sigma_t,\sigma_{x_1})\Vert_{\varepsilon-1,\beta}^2-C\beta^3\Vert(\vartheta,\sigma)\Vert_{\varepsilon-3,\beta}^2\nonumber\\
\geq& -C\beta^3\Vert(\vartheta,\sigma)\Vert_{\varepsilon-3,\beta}^2-C\beta\Vert(\sigma_t,\sigma_{x_1})\Vert_{\varepsilon-1,\beta}^2\nonumber\\
\geq& -C\beta^3\Vert\vartheta\Vert_{\varepsilon-3,\beta}^2-C\beta\Vert(\nabla\varphi,{\rm div}\psi,\nabla\sigma)\Vert_{\varepsilon-1,\beta}^2,
\end{align}
where we have used the elliptic estimate in Lemma \ref{Clem11} and $\eqref{C15}_1$ in the last inequality.

For the term on the right-hand side of \eqref{C64}, similar to \eqref{C52}, it holds that
\begin{align}\label{C74}
\left\lvert\text{the right-hand side of \eqref{C64}}\right\rvert\leq C(\beta^3+N_{\lambda,\beta}(T))\Vert\vartheta\Vert_{\varepsilon-3,\beta}^2+C(\beta+N_{\lambda,\beta}(T)
)\Vert(\nabla\varphi,{\rm div}\psi,\nabla\zeta,\nabla\sigma)\Vert_{\varepsilon-1,\beta}^2.
\end{align}

Substituting $\eqref{C66}$, $\eqref{C73}$, and $\eqref{C74}$  into $\eqref{C64}$, we have
\begin{align}\label{C75}
&\frac{d}{dt}\int_{\mathbb{R}_+^N}W_{\varepsilon,\beta}\left(\frac{1}{2}e^{-\phi-v}\lvert\sigma\rvert^2+\frac{1}{2}e^{-v}\lvert\nabla\sigma\rvert^2+
\frac{\mathcal{E}_0}{\tilde{n}}\right)dx+\int_{\mathbb{R}^{N-1}}\sigma_{x_1}^2(t,0,x')dx'\nonumber\\
\leq &C\beta^3\Vert\vartheta\Vert_{\varepsilon-3,\beta}^2+C\beta\Vert(\nabla\varphi,{\rm div}\psi,\nabla\zeta,\nabla\sigma)\Vert_{\varepsilon-1,\beta}^2
\end{align}
provided that $\delta>0$ is sufficiently small, where $\mathcal{E}_0$ is defined in \eqref{C20}. The desired estimate \eqref{C104} is obtained by multiplying \eqref{C75} by $(1+\beta\tau)^\xi$ and integrating the resulting inequality over $(0,t)$.
\end{proof}

Combining the results of Lemma \ref{Clem3}$-$Lemma \ref{Clem5}. In other words, multiply \eqref{C103} by $\epsilon^2$ and \eqref{C104} by $\epsilon(>0)$, respectively, and sum up these results and \eqref{C102}. Let $\epsilon$ and $\delta$ suitably small,  we have the following estimate:
\begin{align}\label{C76}
(1+\beta t)^\xi&\Vert\vartheta(t)\Vert_{\varepsilon,\beta,1}^2+\int_0^t(1+\beta\tau)^\xi\left\{\beta^3\Vert\vartheta(\tau)\Vert_{\varepsilon-3,\beta}^2+\beta
\Vert(\nabla\vartheta,\nabla\sigma)(\tau)\Vert_{\varepsilon-1,\beta}^2\right\}d\tau \nonumber\\
&\leq C\Vert\vartheta(0)\Vert_{\varepsilon,\beta,1}^2+C\xi\int_0^t(1+\beta\tau)^{\xi-1}\beta\Vert\vartheta(\tau)\Vert_{\varepsilon,\beta,1}^2d\tau.
\end{align}
Obviously, the equality $\lvert{\rm div}\psi\rvert=\lvert\nabla\psi\rvert$ holds true in the case of $N=1$. We get \eqref{C76} only from Lemma \ref{Clem3}.

Based on the energy method, we need to give the energy estimate of $\Vert\nabla^2\vartheta\Vert_{\varepsilon,\beta,1}$ in the case of $N=2,3$ and the energy estimate of $\Vert\vartheta_{x_1x_1}\Vert_{\varepsilon,\beta}$ in the case of $N=1$ respectively, where $\vartheta=(\varphi,\psi,\zeta)$. In fact, Lemma \ref{Clem14} ensures that we just estimate  $\Vert\partial_{yz}\vartheta\Vert_{\varepsilon,\beta,1}$, where $y$,$z$ are either the time variable $t$ or the spatial variables other than $x_1$ in the case of $N=2,3$ and estimate $\Vert\partial_t(\vartheta,\vartheta_{x_1})\Vert_{\varepsilon,\beta}$ in the case of $N=1$ respectively.

Regarding the case of N=1, we can employ the derivation of  \eqref{C102} to obtain the energy estimate of $\Vert\partial_t(\vartheta,\vartheta_x)\Vert_{\varepsilon,\beta}$. For the sake of brevity, we omit details and only state the results:
\begin{align}\label{C125}
(1+&\beta t)^\xi\Vert\vartheta_t\Vert_{\varepsilon,\beta,1}^2+\int_0^t(1+\beta\tau)^\xi\left\{\beta^3\Vert\vartheta_t(\tau)\Vert_{\varepsilon-3,\beta}^2
+\beta\Vert(\vartheta_{x_1t},\sigma_{x_1t})(\tau)\Vert_{\varepsilon-1,\beta}^2\right\}d\tau\nonumber\\
\leq &C\Vert\vartheta_t(0)\Vert_{\varepsilon,\beta,1}^2+C\xi\int_0^t(1+\beta\tau)^{\xi-1}\Vert\vartheta_t\Vert_{\varepsilon,\beta,1}^2d\tau\nonumber\\
&+C\delta\int_0^t(1+\beta\tau)^\xi\left\{\beta^3\Vert(\vartheta,\vartheta_{x_1x_1})(\tau)\Vert_{\varepsilon-3,\beta}^2
+\beta\Vert(\vartheta_{x_1},\vartheta_{t{x_1}})(\tau)\Vert_{\varepsilon-1,\beta}^2\right\}d\tau.
\end{align}

In what follows, we only focus on the case of $N=2,3$. In order to use the structure of \eqref{C15}$-$\eqref{C17} to drive the estimate of $\Vert\partial_{yz}\vartheta\Vert_{\varepsilon,\beta,1}$, one applies $\partial_{yz}$ to $\eqref{C15}_{1-3}$ and \eqref{C28} to find
\begin{equation}\label{C77}
\left\{
\begin{array}{l}
\partial_{yz}\varphi_t+u\cdot \nabla\partial_{yz}\varphi+{\rm div}\partial_{yz}\psi=-\partial_{yz}\psi_1\tilde{v}_{x_1}-G_1,\\
m(\partial_{yz}\psi_t+u\cdot\nabla\partial_{yz}\psi)+RT\nabla\partial_{yz}\varphi+R\nabla\partial_{yz}\zeta=-m\partial_{yz}\psi_1\tilde{U}_{x_1}-R\partial_{yz}\zeta\nabla\tilde{v}
+\nabla\partial_{yz}\sigma-G_2,\\
\partial_{yz}\zeta_t+u\cdot\nabla\partial_{yz}\zeta+(\gamma-1)T{\rm div}\partial_{yz}\psi=-\partial_{yz}\psi_1\tilde{T}_{x_1}-(\gamma-1)\partial_{yz}\zeta\tilde{u}_{x_1}-G_3,\\
\Delta\partial_{yz}\sigma=\tilde{n}\left[\partial_{yz}\varphi+\partial_{yz}\left(\frac{1}{2}e^{\theta_1\varphi}\varphi^2\right)\right]-e^{-\tilde{\phi}}
\left[-\partial_{yz}\sigma+\partial_{yz}\left(\frac{1}{2}e^{-\theta_2\sigma}\sigma^2\right)\right],\ \ \ \theta_1, \theta_2\in(0,1),
\end{array}
\right.
\end{equation}
where
\begin{align}\label{C78}
&G_1=\partial_{yz}\psi\cdot\nabla\varphi+\partial_z\psi\cdot\nabla\partial_y\varphi+\partial_y\psi\cdot\nabla\partial_z\varphi,\nonumber\\
&G_2=m(\partial_{yz}\psi\cdot\nabla\psi+\partial_z\psi\cdot\nabla\partial_y\psi+\partial_y\psi\cdot\nabla\partial_z\psi)
+R(\partial_{yz}\zeta\cdot\nabla\varphi+\partial_z\zeta\cdot\nabla\partial_y\varphi+\partial_y\zeta\cdot\nabla\partial_z\varphi),\nonumber\\
&G_3=\partial_{yz}\psi\cdot\nabla\zeta+\partial_z\psi\cdot\nabla\partial_y\zeta+\partial_y\psi\cdot\nabla\partial_z\zeta,
\end{align}
and
\begin{align}\label{C137}
\partial_{yz}\sigma(t,0,x')=0,\quad  x'\in \mathbb{R}^{N-1}.
\end{align}

Since the system of \eqref{C77} has the same structure as \eqref{C15}, the energy estimates in Lemma \ref{Clem3}$-$Lemma \ref{Clem5} performed there may be easily modified to yield the desired estimates. For the sake of brevity, we define $\mathcal{E}_0^{yz}$, $\mathcal{E}_1^{yz}$, $\widetilde{\mathcal{E}}_0^{yz}$, $\mathcal{H}_0^{yz}$, $\mathcal{H}_1^{yz}$, $\widetilde{\mathcal{H}}_1^{yz}$, $I_k^{yz}(k=1,...,5)$, and $I_l^{yz}(l=7,...,10)$ by modifying $\mathcal{E}_0$, $\mathcal{E}_1$, $\widetilde{\mathcal{E}}_0$, $\mathcal{H}_0$, $\mathcal{H}_1$, $\widetilde{\mathcal{H}}_1$, $I_k(k=1,...,5)$, and $I_l(l=7,...,10)$ so that $\varphi$, $\psi$, $\zeta$, $\sigma$ therein are replaced with $\partial_{yz}\varphi$, $\partial_{yz}\psi$, $\partial_{yz}\zeta$, $\partial_{yz}\sigma$ respectively. Moreover, we use a notation for convenience:
\begin{align}\label{C79}
\Lambda(t):=\beta\Vert(\nabla\vartheta,\nabla\sigma,\nabla^3\vartheta,\nabla\sigma_{tt})\Vert_{\varepsilon-1,\beta}^2
+\beta^3\Vert(\vartheta,\nabla^2\vartheta)\Vert_{\varepsilon-3,\beta}^2.
\end{align}
\begin{lem}\label{Clem6}
Under the same assumption as in Proposition \ref{CProp1}, it holds for any $\xi\geq0$ and $t\in [0,T]$ that
\begin{align}\label{C105}
(1+\beta t)^\xi&\Vert\partial_{yz}(\vartheta,\nabla\varphi,{\rm div} \psi,\nabla\zeta)(t)\Vert_{\varepsilon,\beta}^2\nonumber\\
&+\int_0^t(1+\beta\tau)^\xi\left\{\beta^3\Vert\partial_{yz}\vartheta(\tau)\Vert_{\varepsilon-3,\beta}^2+\beta\Vert\partial_{yz}(\nabla\varphi,{\rm div}\psi,\nabla\zeta,\nabla\sigma)(\tau)\Vert_{\varepsilon-1,\beta}^2\right\}d\tau\nonumber\\
\leq &C\Vert\partial_{yz}(\vartheta,\nabla\varphi,{\rm div} \psi,\nabla\zeta)(0)\Vert_{\varepsilon,\beta}^2+C\xi\int_0^t(1+\beta\tau)^{\xi-1}\beta\Vert\partial_{yz}(\vartheta,\nabla\varphi,{\rm div}\psi,\nabla\zeta)(\tau)\Vert_{\varepsilon,\beta}^2d\tau\nonumber\\
&+C\delta\int_0^t
(1+\beta\tau)^\xi\Lambda (\tau)d\tau,
\end{align}
where positive constants $\delta$ and $C$ are independent of $T$.
\end{lem}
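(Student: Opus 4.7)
The plan is to mimic exactly the proof of Lemma \ref{Clem3}, now applied to the differentiated system \eqref{C77} in place of \eqref{C15}. The crucial structural observation is that \eqref{C77} is identical to \eqref{C15} up to: (i) the linear source terms $-G_i$ $(i=1,2,3)$ arising from commuting $\partial_{yz}$ with the convective nonlinearities, and (ii) the quadratic source in $\eqref{C77}_4$. Moreover, because $y,z\in\{t,x_2,\ldots,x_N\}$ do not differentiate $x_1=0$ out of existence, the boundary condition \eqref{C137} still reads $\partial_{yz}\sigma|_{x_1=0}=0$, which is all that was used on the boundary in Lemma \ref{Clem3} (via $\sigma$ appearing in $\mathcal H_0$ and in the Taylor expansion of $\eqref{C15}_4$).

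First, I would multiply $RT\eqref{C77}_1,\ \eqref{C77}_2,\ \frac{R}{(\gamma-1)T}\eqref{C77}_3$ by $\tilde n\partial_{yz}\varphi,\ \tilde n\partial_{yz}\psi,\ \tilde n\partial_{yz}\zeta$ respectively, and analogously $\nabla(RT\eqref{C77}_1),\ {\rm div}\eqref{C77}_2,\ \nabla\bigl(\tfrac{R}{(\gamma-1)T}\eqref{C77}_3\bigr)$ by $\tilde n\nabla\partial_{yz}\varphi,\ \tilde n\,{\rm div}\partial_{yz}\psi,\ \tilde n\nabla\partial_{yz}\zeta$. Summing gives an identity of the form \eqref{C30} with $\varphi,\psi,\zeta,\sigma$ replaced by $\partial_{yz}\varphi,\ldots,\partial_{yz}\sigma$, plus linear inner products involving $G_1,G_2,G_3$ and the Taylor remainders in $\eqref{C77}_4$. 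Multiplying by $W_{\varepsilon,\beta}$ and integrating over $\mathbb{R}_+^N$ yields the analogue of \eqref{C31}, in which the quantities $I_1^{yz},\ldots,I_5^{yz}$ defined at the end of the excerpt appear on the left-hand side.

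The linear bookkeeping is then identical to the steps leading to \eqref{C33}--\eqref{C49}: the boundary contributions $I_2^{yz},I_4^{yz}$ are nonnegative thanks to \eqref{C137} and the sign condition \eqref{C18}, $I_3^{yz}$ produces the dissipation $\beta\|\partial_{yz}(\nabla\varphi,{\rm div}\psi,\nabla\zeta)\|_{\varepsilon-1,\beta}^2$, and the key combination $I_1^{yz}+I_5^{yz}$ is handled by the quadratic-form argument \eqref{C42}--\eqref{C47}: one tests the differentiated Poisson equation $\eqref{C77}_4$ against $-\varepsilon\beta W_{\varepsilon-1,\beta}\partial_{yz}\sigma$, uses the explicit asymptotics \eqref{C13} of $\tilde\phi,\tilde n$, and invokes the root-condition bound $\varepsilon\le\lambda<\lambda_0$ through \eqref{C95}--\eqref{C96}. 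The outcome is the coercive lower bound
\begin{equation*}
I_1^{yz}+I_5^{yz}\ \ge\ (c-C\delta)\bigl(\beta^3\|\partial_{yz}\vartheta\|_{\varepsilon-3,\beta}^2+\beta\|\nabla\partial_{yz}\sigma\|_{\varepsilon-1,\beta}^2\bigr).
\end{equation*}

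The main obstacle, and the one genuinely new point, is the estimate of the right-hand side $\int W_{\varepsilon,\beta}\mathcal N_1^{yz}\,dx$, which now contains the commutator terms inherited from $G_1,G_2,G_3$ and from $\partial_{yz}(e^{\theta_1\varphi}\varphi^2)$, $\partial_{yz}(e^{-\theta_2\sigma}\sigma^2)$. These are cubic expressions involving up to two $\partial_{yz}$ derivatives and one extra spatial gradient, e.g.\ $\partial_{yz}\psi\!\cdot\!\nabla\varphi$, $\partial_z\psi\!\cdot\!\nabla\partial_y\varphi$, and mixed terms of second-order derivatives of $\vartheta$. I would place the factor carrying the most derivatives in $L^2$ with the weight $W_{\varepsilon,\beta}$, the middle factor in $L^2$ with $W_{\varepsilon-1,\beta}$ or $W_{\varepsilon-3,\beta}$, and the remaining one in $L^\infty$, controlled by $N_{\lambda,\beta}(T)\le\delta\beta^3$ via Lemma \ref{Clem14} and the Sobolev inequality (as in \eqref{C50}--\eqref{C51}). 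Each term is then bounded by $C\delta\,\Lambda(t)$ with $\Lambda$ as in \eqref{C79}; the point is that up to two full derivatives of $\vartheta$ (and one extra from $\nabla$) appear, which is precisely why $\Lambda$ is tailored to include $\|\nabla^3\vartheta\|_{\varepsilon-1,\beta}^2$ and $\|\nabla^2\vartheta\|_{\varepsilon-3,\beta}^2$. After these estimates, the differential inequality analogous to \eqref{C53} reads
\begin{equation*}
\tfrac{d}{dt}\!\!\int_{\mathbb{R}_+^N}\!\!W_{\varepsilon,\beta}\bigl(e^{-\tilde\phi}\mathcal E_0^{yz}+\mathcal E_1^{yz}+\tfrac12\tilde n^2|\partial_{yz}\varphi|^2\bigr)dx+\beta^3\|\partial_{yz}\vartheta\|_{\varepsilon-3,\beta}^2+\beta\|\partial_{yz}(\nabla\varphi,{\rm div}\psi,\nabla\zeta,\nabla\sigma)\|_{\varepsilon-1,\beta}^2\le C\delta\,\Lambda(t).
\end{equation*}
Multiplying by $(1+\beta\tau)^\xi$, integrating over $(0,t)$, and using $-\xi\beta(1+\beta\tau)^{\xi-1}$ on the time-weighted energy (which produces the $\xi$-term on the right of \eqref{C105} in the standard way) yields the claimed inequality \eqref{C105}.
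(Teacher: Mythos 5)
The proposal is correct and follows essentially the same route as the paper's own proof: both apply the Lemma~\ref{Clem3} energy identity to the differentiated system \eqref{C77}, use the boundary condition \eqref{C137} in place of \eqref{C17}, reuse the coercivity of $I_1^{yz}+\cdots+I_5^{yz}$, and absorb the commutator terms $G_1,G_2,G_3$ and the differentiated Taylor remainders into $C\delta\,\Lambda(t)$ via Lemmas~\ref{Clem13}--\ref{Clem15} before multiplying by $(1+\beta\tau)^\xi$ and integrating. Your fleshing out of why $\Lambda$ is tailored to absorb the third-order derivative terms is a correct reading of what the paper leaves implicit.
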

\begin{proof}
We may argue as the derivation of \eqref{C31} in Lemma \ref{Clem3}. More precisely, multiplying $RT\eqref{C77}_1$, $\eqref{C77}_2$, and $\frac{R}{(\gamma-1)T}\eqref{C77}_3$ by $e^{-\tilde{\phi}}\tilde{n}\partial_{yz}\varphi$, $e^{-\tilde{\phi}}\tilde{n}\partial_{yz}\psi$, and $e^{-\tilde{\phi}}\tilde{n}\partial_{yz}\zeta$ respectively. Then we multiply $\nabla\left(RT\eqref{C77}_1\right)$, ${\rm div}\eqref{C77}_2$, and $\nabla\left(\frac{R}{(\gamma-1)T}\eqref{C77}_3\right)$ by $\tilde{n}\nabla\partial_{yz}\varphi$, $\tilde{n}{\rm div}\partial_{yz}\psi$, and $\tilde{n}\nabla\partial_{yz}\zeta$ respectively. Sum them making use of $\eqref{C77}_4$ and $\eqref{C77}_1$. Multiply the result by $W_{\varepsilon,\beta}=(1+\beta x_1)^\varepsilon$ and integrate over $\mathbb{R}_N^+$. Using Lemma \ref{Clem13}$-$Lemma \ref{Clem15}, we get
\begin{align}\label{C80}
\frac{d}{dt}\int_{\mathbb{R}_+^N}W_{\varepsilon,\beta}\left[e^{-\tilde{\phi}}\mathcal{E}_0^{yz}+\mathcal{E}_1^{yz}
+\frac{1}{2}\tilde{n}^2(\partial_{yz}\varphi)^2\right]dx+\sum_{i=1}^5I_i^{yz}\leq C\delta\Lambda(t).
\end{align}
Recall the estimates of $I_i(i=1,...,5)$ in Lemma \ref{Clem3}, one has
\begin{align}\label{C81}
\sum_{i=1}^5I_i^{yz}\geq c\beta^3\Vert\partial_{yz}\vartheta\Vert_{\varepsilon-3,\beta}^2+c\beta\Vert\partial_{yz}(\nabla\varphi,{\rm div}\psi,\nabla\zeta,\nabla\sigma)\Vert_{\varepsilon-1,\beta}^2-C\delta\Lambda(t).
\end{align}

Combining these, we see that
\begin{align}\label{C82}
\frac{d}{dt}&\int_{\mathbb{R}_+^N}W_{\varepsilon,\beta}\left[e^{-\tilde{\phi}}\mathcal{E}_0^{yz}+\mathcal{E}_1^{yz}
+\frac{1}{2}\tilde{n}^2(\partial_{yz}\varphi)^2\right]dx\nonumber\\
&+\beta^3\Vert\partial_{yz}\vartheta\Vert_{\varepsilon-3,\beta}^2+\beta\Vert\partial_{yz}(\nabla\varphi,{\rm div}\psi,\nabla\zeta,\nabla\sigma)\Vert_{\varepsilon-1,\beta}^2\leq C\delta\Lambda(t).
\end{align}

Multiply \eqref{C82} by $(1+\beta\tau)^\xi$ and integrate over $(0,t)$ to give the desired estimate \eqref{C105}.
\end{proof}
\begin{lem}\label{Clem7}
Under the same assumption as in Proposition \ref{CProp1}, it holds for any $\xi\geq0$ and $t\in [0,T]$ that
\begin{align}\label{C106}
(1+&\beta t)^\xi\Vert\partial_{yz}\vartheta(t)\Vert_{\varepsilon,\beta,1}^2+\int_0^t(1+\beta\tau)^\xi\left\{\beta^3\Vert\partial_{yz}\vartheta(\tau)\Vert_{\varepsilon-3,\beta}^2
+\beta\Vert\partial_{yz}\nabla\vartheta(\tau)\Vert_{\varepsilon-1,\beta}^2\right\}d\tau\nonumber\\
\leq &C\Vert\partial_{yz}\vartheta(0)\Vert_{\varepsilon,\beta,1}^2+C\xi\int_0^t(1+\beta\tau)^{\xi-1}\beta\Vert\partial_{yz}\vartheta(\tau)\Vert_{\varepsilon,\beta,1}^2d\tau\nonumber\\
&+C\int_0^t(1+\beta\tau)^\xi\beta\Vert\partial_{yz}\nabla\sigma(\tau)\Vert_{\varepsilon-1,\beta}^2d\tau+C\int_0^t(1+\beta \tau)^\xi\int_{\mathbb{R}^{N-1}}(\partial_{yz}\sigma_{x_1})^2(\tau,0,x')dx'd\tau\nonumber\\
&+C\delta\int_0^t(1+\beta\tau)^\xi\Lambda (\tau)d\tau,
\end{align}
where positive constants $\delta$ and $C$ are independent of $T$.
\end{lem}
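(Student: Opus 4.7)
The plan is to repeat the derivation of Lemma \ref{Clem4}, but starting from the differentiated system \eqref{C77} in place of \eqref{C15}, so that the roles of $(\varphi,\psi,\zeta,\sigma)$ are played by $(\partial_{yz}\varphi,\partial_{yz}\psi,\partial_{yz}\zeta,\partial_{yz}\sigma)$. Concretely, I would multiply $\nabla(RT\,\eqref{C77}_1)$, $\nabla\eqref{C77}_2$, and $\nabla\!\left(\tfrac{R}{(\gamma-1)T}\eqref{C77}_3\right)$ by $\tilde{n}\nabla\partial_{yz}\varphi$, $\tilde{n}\nabla\partial_{yz}\psi$, and $\tilde{n}\nabla\partial_{yz}\zeta$, respectively, which produces an analog of \eqref{C54}, namely
\begin{equation*}
(\widetilde{\mathcal{E}}_1^{yz})_t+\mathrm{div}\,\widetilde{\mathcal{H}}_1^{yz}-\tilde{n}\,\mathrm{div}\,\partial_{yz}\psi\,\Delta\partial_{yz}\sigma=\widetilde{\mathcal{R}}_1^{yz}+(\text{commutators from }G_1,G_2,G_3).
\end{equation*}
Independently, from $RT\,\eqref{C77}_1$, $\eqref{C77}_2$, $\tfrac{R}{(\gamma-1)T}\eqref{C77}_3$ tested against $e^{-\tilde{\phi}}\tilde{n}\partial_{yz}\varphi$, $e^{-\tilde{\phi}}\tilde{n}\partial_{yz}\psi$, $e^{-\tilde{\phi}}\tilde{n}\partial_{yz}\zeta$ one gets the analog of \eqref{C19} at the $\partial_{yz}$ level, and the same Poisson identity leading to \eqref{C29} applies, since $\eqref{C77}_4$ has exactly the structure of \eqref{C28} plus higher-order nonlinearities. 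Adding the two balances and applying $e^{-\tilde{\phi}}$ to the zero-order one, then weighting by $W_{\varepsilon,\beta}$ and integrating over $\mathbb{R}_+^N$, yields the $(yz)$-analog of \eqref{C59}, whose left-hand side contains the terms $I_1^{yz},I_2^{yz},I_5^{yz},I_7^{yz},I_8^{yz}$.

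For the dissipative terms I would invoke Lemma \ref{Clem6}, whose bound \eqref{C81} already gives
\begin{equation*}
I_1^{yz}+I_5^{yz}\ge (c-C\delta)\bigl(\beta^3\|\partial_{yz}\vartheta\|_{\varepsilon-3,\beta}^2+\beta\|\partial_{yz}\nabla\sigma\|_{\varepsilon-1,\beta}^2\bigr),
\end{equation*}
and $I_2^{yz}\ge 0$ by the boundary condition \eqref{C137} $\partial_{yz}\sigma|_{x_1=0}=0$, exactly as in \eqref{C33}. The crucial new term is $I_7^{yz}+I_8^{yz}$: reproducing the chain \eqref{C32}--\eqref{C60} with Cauchy--Schwarz to absorb the mixed $\nabla\partial_{yz}\psi\cdot\nabla\partial_{yz}\sigma$ contribution, one gets
\begin{equation*}
I_7^{yz}+I_8^{yz}\ \ge\ c\beta\|\partial_{yz}\nabla\vartheta\|_{\varepsilon-1,\beta}^2-C\beta\|\partial_{yz}\nabla\sigma\|_{\varepsilon-1,\beta}^2-C\!\int_{\mathbb{R}^{N-1}}(\partial_{yz}\sigma_{x_1})^2(t,0,x')\,dx',
\end{equation*}
which is the reason the boundary term appears on the right-hand side of \eqref{C106}, to be absorbed later by a $\partial_{yz}$-analog of Lemma \ref{Clem5}.

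For the right-hand side nonlinear residue $\widetilde{\mathcal{N}}_1^{yz}$ together with the commutators $G_1,G_2,G_3$ from \eqref{C78}, I would bound each factor in $L^\infty$ by $\|\cdot\|_{L^\infty}\lesssim \|\vartheta\|_{\varepsilon,\beta,s}$ using the weighted Sobolev embedding (Lemma \ref{Clem14}, $s=[N/2]+2$), keep the remaining two factors in a weighted $L^2$ norm, and then read off that every term is dominated by $C\delta\,\Lambda(t)$ with $\Lambda$ as in \eqref{C79}, using the smallness $N_{\lambda,\beta}(T)/\beta^3\le\delta$ from \eqref{C101} together with the stationary-solution estimates \eqref{C13}; this is exactly the mechanism of \eqref{C51}--\eqref{C52} at one derivative higher, so I would only sketch it as in the authors' convention. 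Collecting these bounds gives a pointwise-in-time differential inequality
\begin{equation*}
\tfrac{d}{dt}\!\int_{\mathbb{R}_+^N}W_{\varepsilon,\beta}\bigl[e^{-\tilde{\phi}}\mathcal{E}_0^{yz}+\widetilde{\mathcal{E}}_1^{yz}+\tfrac{1}{2}\tilde{n}^2(\partial_{yz}\varphi)^2\bigr]dx+\beta^3\|\partial_{yz}\vartheta\|_{\varepsilon-3,\beta}^2+\beta\|\partial_{yz}\nabla\vartheta\|_{\varepsilon-1,\beta}^2
\end{equation*}
$\le C\delta\Lambda(t)+C\beta\|\partial_{yz}\nabla\sigma\|_{\varepsilon-1,\beta}^2+C\!\int_{\mathbb{R}^{N-1}}(\partial_{yz}\sigma_{x_1})^2(t,0,x')\,dx'$. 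Finally, multiplying by $(1+\beta\tau)^\xi$ and integrating on $[0,t]$ (with the usual time-weight trick producing the $\xi\!\int_0^t(1+\beta\tau)^{\xi-1}\beta\|\partial_{yz}\vartheta\|_{\varepsilon,\beta,1}^2\,d\tau$ term) gives exactly \eqref{C106}.

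The main obstacle is the boundary contribution from $I_8^{yz}$: after integration by parts in the $x_1$-derivatives inside $\widetilde{\mathcal{H}}_1^{yz}$, one inevitably picks up $(\partial_{yz}\sigma_{x_1})^2(t,0,x')$, and unlike the tangential and temporal traces of $\partial_{yz}\sigma$ (which vanish by \eqref{C137}), this normal-derivative trace cannot be controlled here; it must be deferred to the $\partial_{yz}$-version of the key estimate \eqref{C65} of Lemma \ref{Clem5}. Everything else is routine once one verifies that the commutators $G_1,G_2,G_3$ are genuinely $\delta$-small cubic/quartic terms, so that they can be hidden on the left after summing this lemma with its counterparts at the end of the section.
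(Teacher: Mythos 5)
Your proposal is correct and follows essentially the same route as the paper: differentiate the system to obtain \eqref{C77}, apply the argument of Lemma \ref{Clem4} to the $\partial_{yz}$-level quantities to obtain the analogs of \eqref{C58}--\eqref{C59}, control $I_1^{yz}+I_2^{yz}+I_5^{yz}$ exactly as $I_1+I_2+I_5$ (via the analog of \eqref{C49}), estimate $I_7^{yz}+I_8^{yz}$ as in \eqref{C60} to expose the normal-trace boundary term, absorb the commutators $G_1,G_2,G_3$ and the nonlinear residue into $C\delta\Lambda$ using Lemma \ref{Clem13}--Lemma \ref{Clem15}, and then integrate in time with weight $(1+\beta\tau)^\xi$. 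The paper records this as \eqref{C83}--\eqref{C85}; your write-up merely fills in the same sketch with the expected details.
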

\begin{proof}
Apply the same argument as in Lemma \ref{Clem4} to $\eqref{C77}-\eqref{C78}$, with the help of Lemma \ref{Clem13}$-$Lemma \ref{Clem15}, it holds that
\begin{align}\label{C83}
\frac{d}{dt}\int_{\mathbb{R}_+^N}W_{\varepsilon,\beta}\left[e^{-\tilde{\phi}}\mathcal{E}_0^{yz}+\widetilde{\mathcal{E}}_1^{yz}
+\frac{1}{2}\tilde{n}^2(\partial_{yz}\varphi)^2\right]dx+I_1^{yz}+I_2^{yz}+I_5^{yz}+I_7^{yz}+I_8^{yz}\leq C\delta\Lambda(t).
\end{align}
Referring to the proof of Lemma \ref{Clem4}, we have
\begin{align}\label{C84}
I_1^{yz}+I_2^{yz}+I_5^{yz}+I_7^{yz}+I_8^{yz}\geq &c\beta^3\Vert\partial_{yz}\vartheta\Vert_{\varepsilon-3,\beta}^2+c\beta\Vert\nabla\partial_{yz}\vartheta\Vert_{\varepsilon-1,\beta}^2\nonumber\\
&-C\beta\Vert\nabla\partial_{yz}\sigma\Vert_{\varepsilon-1,\beta}^2-C\int_{\mathbb{R}^{N-1}}\sigma_{yzx_1}^2(t,0,x')dx'-C\delta\Lambda(t).
\end{align}
In light of \eqref{C83} and \eqref{C84}, we obtain
\begin{align}\label{C85}
 &\frac{d}{dt}\int_{\mathbb{R}_+^N}W_{\varepsilon,\beta}\left[e^{-\tilde{\phi}}\mathcal{E}_0^{yz}+\widetilde{\mathcal{E}}_1^{yz}
+\frac{1}{2}\tilde{n}^2(\partial_{yz}\varphi)^2\right]dx+\beta^3\Vert\partial_{yz}\vartheta\Vert_{\varepsilon-3,\beta}^2
+\beta\Vert\nabla\partial_{yz}\vartheta\Vert_{\varepsilon-1,\beta}^2\nonumber\\
\leq& C\beta\Vert\nabla\partial_{yz}\sigma\Vert_{\varepsilon-1,\beta}^2+C\int_{\mathbb{R}^{N-1}}(\partial_{yz}\sigma_{x_1})^2(t,0,x')dx'+C\delta\Lambda(t).
\end{align}

We multiply this inequality by $(1+\beta\tau)^\xi$ and integrate over $(0,t)$ to get \eqref{C106}.
\end{proof}
\begin{lem}\label{Clem8}
Under the same assumption as in Proposition \ref{CProp1}, it holds for any $\xi\geq0$ and $t\in [0,T]$ that
\begin{align}\label{C107}
(1+&\beta t)^\xi\Vert\partial_{yz}\vartheta(t)\Vert_{\varepsilon,\beta}^2+\int_0^t(1+\beta \tau)^\xi\int_{\mathbb{R}^{N-1}}(\partial_{yz}\sigma_{x_1})^2(\tau,0,x')dx'd\tau\nonumber\\
\leq &C\Vert\partial_{yz}\vartheta(0)\Vert_{\varepsilon,\beta}^2+C\Vert\vartheta_0\Vert_{\varepsilon,\beta,1}^2
+C\xi\int_0^t(1+\beta\tau)^{\xi-1}\beta\Vert\partial_{yz}\vartheta(\tau)\Vert_{\varepsilon,\beta}^2d\tau\nonumber\\
&+C\int_0^t(1+\beta\tau)^\xi\left\{\beta^3\Vert\partial_{yz}\vartheta(\tau)\Vert_{\varepsilon-3,\beta}^2+\beta\Vert\partial_{yz}(\nabla\varphi,{\rm div}\psi,\nabla\zeta,\nabla\sigma)(\tau)\Vert_{\varepsilon-1,\beta}^2\right\}d\tau\nonumber\\
&+C\delta\int_0^t(1+\beta\tau)^\xi\Lambda (\tau)d\tau,
\end{align}
where positive constants $\delta$ and $C$ are independent of $T$.
\end{lem}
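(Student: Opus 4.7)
The plan is to mimic the proof of Lemma \ref{Clem5} verbatim with $\partial_{yz}\vartheta$ and $\partial_{yz}\sigma$ in place of $\vartheta$ and $\sigma$. This substitution is legitimate because the differentiated system \eqref{C77} has the same principal linear structure as \eqref{C15}, and crucially the homogeneous Dirichlet boundary condition \eqref{C137} $\partial_{yz}\sigma(t,0,x')=0$ is preserved. This last fact is exactly what makes the boundary integration-by-parts in the pivotal identity \eqref{C65} continue to work, and hence what allows the positive boundary contribution $\int_{\mathbb{R}^{N-1}}(\partial_{yz}\sigma_{x_1})^2(t,0,x')dx'$ to be extracted.

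Concretely, I would apply $e^{-v}\partial_t$ to $\eqref{C77}_4$, multiply the resulting equation by $\partial_{yz}\sigma$, and then use $\eqref{C77}_1$--$\eqref{C77}_3$ to eliminate the time derivatives of $\partial_{yz}\varphi$ and $\partial_{yz}\sigma$ exactly as in the derivation of \eqref{C63}. This yields a pointwise identity of the form $(\widetilde{\mathcal{E}})_t+\mathrm{div}\,\widetilde{\mathcal{H}} = \widetilde{\mathcal{N}} - \tilde{u}\partial_{yz}\sigma_{x_1}(e^{\partial_{yz}\varphi}-1)_{\text{lin}}$, paralleling \eqref{C63}. Multiplying by $W_{\varepsilon,\beta}$ and integrating over $\mathbb{R}_+^N$, the analog of \eqref{C65} produces the positive boundary term $\int_{\mathbb{R}^{N-1}}\frac{-\tilde{u}e^{-\tilde v}}{2}(\partial_{yz}\sigma_{x_1})^2\,dx'$, bounded below by $c\int_{\mathbb{R}^{N-1}}(\partial_{yz}\sigma_{x_1})^2\,dx'$ thanks to $-\tilde u\geq c>0$; the zeroth-order bulk quadratic form is completed into squares just as in \eqref{C73}, giving a lower bound of the form $-C\beta^3\Vert\partial_{yz}\vartheta\Vert_{\varepsilon-3,\beta}^2-C\beta\Vert\partial_{yz}(\nabla\varphi,\mathrm{div}\,\psi,\nabla\zeta,\nabla\sigma)\Vert_{\varepsilon-1,\beta}^2$, both of which are already present on the right of \eqref{C107}. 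The term analogous to \eqref{C68}--\eqref{C69}, namely $\int\varepsilon\beta W_{\varepsilon-1,\beta}e^{-v}\partial_{yz}\sigma\,\partial_{yz}\sigma_{x_1 t}\,dx$, is handled by integration by parts in $x_1$ and then by using $\eqref{C77}_4$ to trade $\partial_{yz}\sigma_t$ for spatial derivatives of $\partial_{yz}\vartheta$, with the smallness coming from \eqref{C93}.

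The cubic commutator terms $G_1,G_2,G_3$ from \eqref{C78} and the quadratic Taylor remainders in $\eqref{C77}_4$ are absorbed into $C\delta\Lambda(t)$ using the Sobolev-type estimates in Lemma \ref{Clem13} together with the smallness assumption $N_{\lambda,\beta}(T)/\beta^3\leq\delta$. The initial data term $\Vert\partial_{yz}\vartheta(0)\Vert_{\varepsilon,\beta}^2$ is natural, while the additional contribution $\Vert\vartheta_0\Vert_{\varepsilon,\beta,1}^2$ arises when evaluating $\partial_{yz}\sigma(0)$ and $\nabla\partial_{yz}\sigma(0)$ through the elliptic equation $\eqref{C77}_4$ at $t=0$; Lemma \ref{Clem11} bounds these by one spatial derivative of $\vartheta_0$.

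The main obstacle I anticipate is the careful bookkeeping of the commutators: the cubic terms in \eqref{C78} contain products of three first-order derivatives such as $\partial_z\psi\cdot\nabla\partial_y\varphi$, and to absorb them into $C\delta\Lambda(t)$ one must decide which factor receives the $L^\infty$ estimate (Lemma \ref{Clem13}) and which is placed in the weighted dissipation norm, with weights $W_{\varepsilon-1,\beta}$ vs.\ $W_{\varepsilon,\beta}$ tracked correctly. Once this is organized, multiplying the resulting differential inequality by $(1+\beta\tau)^\xi$ and integrating over $\tau\in(0,t)$ produces \eqref{C107}, completing the proof.
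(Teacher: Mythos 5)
Your proposal matches the paper's proof essentially verbatim: the paper likewise applies $e^{-v}\partial_t$ to $\partial_{yz}$ of $\eqref{C15}_4$, multiplies by $\partial_{yz}\sigma$, uses $\eqref{C77}_1$--$\eqref{C77}_3$ and the weight $W_{\varepsilon,\beta}$ to arrive at a differential inequality with $I_9^{yz}+I_{10}^{yz}$, extracts the boundary positivity $c\int_{\mathbb{R}^{N-1}}\sigma_{yzx_1}^2\,dx'$ exactly as in \eqref{C66} and \eqref{C73} (hinging on \eqref{C137}), absorbs the commutators into $C\delta\Lambda(t)$ via Lemmas \ref{Clem13}--\ref{Clem15}, and handles the initial-data contribution $\Vert\vartheta_0\Vert_{\varepsilon,\beta,1}^2$ through the elliptic estimate $\Vert\partial_{yz}(\sigma,\nabla\sigma)(0)\Vert_{\varepsilon,\beta}\leq C\Vert\varphi_0\Vert_{\varepsilon,\beta,1}$ of Lemma \ref{Clem11}. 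You correctly identified every key ingredient; no gaps.
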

\begin{proof}
Now we follow the proof of Lemma \ref{Clem5}. We apply $e^{-v}\partial_t$ to $\partial_{yz}\eqref{C15}_{4}$, then multiply the resultant equality by $\partial_{yz}\sigma$. Using $\eqref{C77}_1-\eqref{C77}_3$ and multiplying the result by $W_{\varepsilon,\beta}=(1+\beta x_1)^\varepsilon$, one has
\begin{align}\label{C86}
\frac{d}{dt}&\int_{\mathbb{R}_+^N}W_{\varepsilon,\beta}\left(\frac{1}{2}e^{-\phi-v}\lvert\partial_{yz}\sigma\rvert^2+\frac{1}{2}e^{-v}\lvert\nabla\partial_{yz}\sigma\rvert^2+
\frac{\mathcal{E}_0^{yz}}{\tilde{n}}\right)dx+I_9^{yz}+I_{10}^{yz}\leq C\delta\Lambda(t),
\end{align}
where we also have used  Lemma \ref{Clem13}$-$Lemma \ref{Clem15}.
Similar to \eqref{C66} and \eqref{C73}, one obtains
\begin{align}\label{C87}
I_9^{yz}+I_{10}^{yz}\geq c\int_{\mathbb{R}^{N-1}}\sigma_{yzx_1}^2(t,0,x')dx'-C\beta^3\Vert\partial_{yz}\vartheta\Vert_{\varepsilon-3,\beta}^2-C\beta\Vert\partial_{yz}(\nabla\varphi,{\rm div}\psi,\nabla\sigma)\Vert_{\varepsilon-1,\beta}^2-C\delta\Lambda(t).
\end{align}
Combining \eqref{C86}$-$\eqref{C87} gives
\begin{align}\label{C88}
\frac{d}{dt}&\int_{\mathbb{R}_+^N}W_{\varepsilon,\beta}\left(\frac{1}{2}e^{-\phi-v}\lvert\partial_{yz}\sigma\rvert^2+\frac{1}{2}e^{-v}\lvert\nabla\partial_{yz}\sigma\rvert^2+
\frac{\mathcal{E}_0^{yz}}{\tilde{n}}\right)dx+\int_{\mathbb{R}^{N-1}}\sigma_{yzx_1}^2(t,0,x')dx'\nonumber\\
&\leq C\beta^3\Vert\partial_{yz}\vartheta\Vert_{\varepsilon-3,\beta}^2+C\beta\Vert\partial_{yz}(\nabla\varphi,{\rm div}\psi,\nabla\sigma)\Vert_{\varepsilon-1,\beta}^2+C\delta\Lambda(t).
\end{align}

With the help of the elliptic estimate in Lemma \ref{Clem11} $\Vert\partial_{yz}(\sigma,\nabla\sigma)(0)\Vert_{\varepsilon,\beta}\leq C\Vert\varphi_0\Vert_{\varepsilon,\beta,1}$, the desired estimate \eqref{C107} is obtained by multiplying \eqref{C88} by $(1+\beta\tau)^\xi$ and integrating the resulting inequality over $(0,t)$.
\end{proof}

We now present the proof of the Proposition \ref{CProp1}.

For the case of $N=2,3$, by the same procedure as in deriving \eqref{C76} from Lemma \ref{Clem6}$-$Lemma \ref{Clem8}, one has
\begin{align}\label{C89}
(1+\beta t)^\xi&\Vert\partial_{yz}\vartheta(t)\Vert_{\varepsilon,\beta,1}^2+\int_0^t(1+\beta\tau)^\xi\left\{\beta^3\Vert\partial_{yz}\vartheta(\tau)
\Vert_{\varepsilon-3,\beta}^2+\beta
\Vert\partial_{yz}(\nabla\vartheta,\nabla\sigma)(\tau)\Vert_{\varepsilon-1,\beta}^2\right\}d\tau \nonumber\\
\leq &C\Vert\partial_{yz}\vartheta(0)\Vert_{\varepsilon,\beta,1}^2+C\Vert\vartheta(0)\Vert_{\varepsilon,\beta,1}^2
+C\xi\int_0^t(1+\beta\tau)^{\xi-1}\beta\Vert\partial_{yz}\vartheta(\tau)\Vert_{\varepsilon,\beta,1}^2d\tau \nonumber\\
&+C\delta\int_0^t(1+\beta t)^\xi\Lambda(\tau)d\tau.
\end{align}
We add the sum of \eqref{C89} for the entire combination of $(y,z)$ to \eqref{C76}, apply Lemma \ref{Clem13}$-$Lemma \ref{Clem14}, and then choose $\delta$ small enough, which derives
\begin{align}\label{C90}
(1+\beta t)^\xi&\Vert\vartheta(t)\Vert_{\varepsilon,\beta,3}^2+\int_0^t\beta^3(1+\beta\tau)^\xi\Vert\vartheta(\tau)\Vert_{\varepsilon-3,\beta,3}^2d\tau\nonumber\\
&\leq C\Vert\vartheta(0)\Vert_{\varepsilon,\beta,3}^2+C\xi\int_0^t(1+\beta\tau)^{\xi-1}\beta\Vert\vartheta(\tau)\Vert_{\varepsilon,\beta,3}^2d\tau.
\end{align}
By applying induction argument in \cite{KM1985} and \cite{MN1998} with $\xi=(\lambda-\varepsilon)/3+\kappa(\kappa>0)$ and the elliptic estimates Lemma \ref{Clem11} yields
\begin{align}\label{C91}
(1+\beta t)&^{(\lambda-\varepsilon)/3+\kappa}\left(\Vert\vartheta(t)\Vert_{\varepsilon,\beta,s}^2+\Vert\sigma(t)\Vert_{\varepsilon,\beta,s+2}^2\right)\nonumber\\
&+\int_0^t\beta^3(1+\beta \tau)^{(\lambda-\varepsilon)/3+\kappa}\left(\Vert\vartheta(\tau)\Vert_{\varepsilon-3,\beta,s}^2+\Vert\sigma(\tau)\Vert_{\varepsilon-3,\beta,s+2}^2\right)d\tau\leq C(1+\beta t)^\kappa\Vert\vartheta_0\Vert_{\lambda,\beta,s}^2.
\end{align}

For the case of $N=1$, by \eqref{C76} and \eqref{C125}, we obtain from Lemma\ref{Clem13} and Lemma\ref{Clem14} that
\begin{align}\label{C126}
(1+\beta t)^\xi&\Vert\vartheta(t)\Vert_{\varepsilon,\beta,2}^2+\int_0^t\beta^3(1+\beta\tau)^\xi\Vert\vartheta(\tau)\Vert_{\varepsilon-3,\beta,2}^2d\tau\nonumber\\
&\leq C\Vert\vartheta(0)\Vert_{\varepsilon,\beta,2}^2+C\xi\int_0^t(1+\beta\tau)^{\xi-1}\beta\Vert\vartheta(\tau)\Vert_{\varepsilon,\beta,2}^2d\tau,
\end{align}
which corresponds to \eqref{C90}. With the above estimate and \eqref{C91} in hand, we complete the proof of Proposition \ref{CProp1}.

\section{Energy estimates for the nondegenerate case}

In this section, we study the stability of the planar stationary solution to \eqref{C4} for the nondegenerate case \eqref{C92}. From the local existence result in Lemma \ref{Clem2} and a \emph{priori} estimates in Proposition \ref{CProp2}, we can obtain the global existence of solution by the standard continuity argument. Hence, we only focus on the proof of Proposition \ref{CProp2}. Due to the different properties of the stationary solution in \eqref{C11} and \eqref{C13}, the proof of the a \emph{priori} estimates for the nondegenerate problem is easier than that for the degenerate problem. We omit further details.

\subsection{A \emph{priori} estimates for the nondegenerate case}
The section is devoted to show a \emph{priori} estimates of Proposition \ref{CProp2}. To this end, we define the following notation for convenience:
\begin{align*}
\vartheta(t,x):=(\varphi,\psi,&\zeta)(t,x),\quad \vartheta_0(x):=(\varphi_0,\psi_0,\zeta_0)(x),\\
N_\lambda(T):&=\sup_{0\leq t\leq T}\Vert e^{\lambda x_1 /2}\vartheta(t)\Vert_{H^s}.
\end{align*}
\begin{prop}\label{CProp2}
Assume the same conditions on $N,m,T_\infty,u_\infty, \lambda_0$, and $\lambda$ hold as in Theorem \ref{CThm1}.
\item {\rm(i)}\ \ Let $(\vartheta,\sigma)(x,t)$ be a solution to \eqref{C15}$-$\eqref{C17} which satisfies
\begin{align*}
(e^{\lambda x_1 /2}\vartheta,e^{\lambda x_1 /2}\sigma)\in (\mathscr{X}_s^0([0,T]))^{N+2}\times\mathscr{X}_s^2([0,T]),
\end{align*}
over a time interval $[0,T]$ for  $T>0$.
Then there exist positive constants $\delta$ and $C$ independent of $T$ such that if all the following conditions
\begin{align*}
\alpha>0,\ \ \beta\in(0,\lambda],\ \ {\rm and}\ \ \beta+(\phi_b+N_{\lambda}(T)+\alpha)/\beta\leq \delta,
\end{align*}
are satisfied, then it holds for any $t\in [0,T]$ that
\begin{align}\label{C108}
\Vert e^{\beta x_1 /2}\vartheta(t)\Vert_{H^s}^2+\Vert e^{\beta x_1 /2}\sigma(t)\Vert_{H^{s+2}}^2\leq C\Vert e^{\beta x_1 /2}\vartheta_0\Vert_{H^s}^2e^{-\alpha t}.
\end{align}
\item {\rm(ii)}\ \ Let $(\vartheta,\sigma)(x,t)$ be a solution to \eqref{C15}$-$\eqref{C17}
over a time interval $[0,T]$ for  $T>0$.
Then there exist positive constants $\delta$ and $C$ independent of $T$ such that if all the conditions
\begin{align*}
((1+\beta x_1)^{\lambda/2}\vartheta,(1+\beta x_1)^{\lambda/2}\sigma)\in (\mathscr{X}_s^0([0,T]))^{N+2}\times\mathscr{X}_s^2([0,T])
\end{align*}
and
\begin{align}\label{C116}
\beta+(\phi_b+N_{\lambda,\beta}(T))/\beta\leq \delta,\ \ \beta>0,
\end{align}
are satisfied, then it holds for any $\varepsilon\in (0,\lambda]$ and $t\in [0,T]$ that
\begin{align}\label{C109}
\Vert\vartheta(t)\Vert_{\varepsilon,\beta,s}^2+\Vert\sigma(t)\Vert_{\varepsilon,\beta,s+2}^2\leq C\Vert\vartheta_0\Vert_{\lambda,\beta,s}^2(1+\beta t)^{-(\lambda-\varepsilon)}.
\end{align}
\end{prop}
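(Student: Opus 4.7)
The plan is to imitate the weighted energy framework developed for the degenerate case in Lemmas \ref{Clem3}--\ref{Clem8}, but to exploit the much stronger spatial decay of the stationary solution recorded in \eqref{C11}. Because $(\tilde{n}-1,\tilde{u}-u_\infty,\tilde{T}-T_\infty,\tilde{\phi})$ and all their derivatives decay like $|\phi_b|e^{-cx_1}$, every term that involved the algebraically slow factors $G(x_1)^{-2}$ or $G(x_1)^{-3}$ in Section 2 is now exponentially small and can be absorbed by the principal weighted dissipation coming from $(-\mathcal{H}_0)_1$, $(-\mathcal{H}_1)_1$ and the coefficient of $\varepsilon\beta W_{\varepsilon-1,\beta}$ (or of $\beta e^{\beta x_1}$) without any delicate balancing. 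Moreover, because the strict Bohm criterion \eqref{C92} holds, the analogue of the quadratic form $Q(x)$ in \eqref{C43} has its leading part determined purely by $u_\infty$ and $T_\infty$ at the far field, and is strictly positive definite; this removes the constraint $\lambda<\lambda_0$ and eliminates the constants $\Gamma, G, B, S$ from the analysis.

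For part (i), I would take the weight $W_\beta=e^{\beta x_1}$ and repeat the energy identities \eqref{C19}--\eqref{C30} with $W_{\varepsilon,\beta}$ replaced by $e^{\beta x_1}$. Since $\partial_{x_1}e^{\beta x_1}=\beta e^{\beta x_1}$, the analogue of $I_1+I_3$ returns the \emph{full} weighted norm $\beta\|(\vartheta,\nabla\varphi,\mathrm{div}\,\psi,\nabla\zeta,\nabla\sigma)\|^{2}_{e^{\beta x_1}}$ in the dissipation, not merely the weight shifted by a power. Combining the analogues of Lemmas \ref{Clem3}--\ref{Clem5} and the higher-order versions (obtained by applying $\partial_{yz}$ as in \eqref{C77}) yields a differential inequality of the form
\begin{equation*}
\frac{d}{dt}E(t)+c\beta\,E(t)\leq 0,\qquad E(t)\sim\|e^{\beta x_1/2}\vartheta(t)\|_{H^s}^2+\|e^{\beta x_1/2}\sigma(t)\|_{H^{s+2}}^2,
\end{equation*}
after absorbing the cubic terms via the smallness hypothesis $N_\lambda(T)/\beta\leq\delta$. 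Since $\alpha/\beta\leq\delta$, choosing $\delta$ small enough makes $\alpha<c\beta$, and Gronwall's inequality immediately yields the exponential decay \eqref{C108}.

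For part (ii), I would instead use the algebraic weight $W_{\varepsilon,\beta}=(1+\beta x_1)^\varepsilon$ and follow Lemmas \ref{Clem3}--\ref{Clem8} almost verbatim. The $\mathcal{D}_0$--, $\mathcal{R}_0$-- and $\mathcal{R}_1$--type remainders are now all majorized by $C(|\phi_b|e^{-cx_1}+N_{\lambda,\beta}(T))$ times quadratic expressions in $(\vartheta,\nabla\vartheta,\nabla\sigma)$, and the exponential factor $e^{-cx_1}$ eats any loss of polynomial weight. The positivity analysis analogous to \eqref{C40} is trivial: the quadratic form reduces in the limit $x_1\to\infty$ to the one produced by the strictly supersonic coefficient matrix, which is positive definite under \eqref{C92}. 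Thus one obtains
\begin{equation*}
\frac{d}{dt}\widetilde E(t)+c\bigl\{\beta\|\vartheta\|_{\varepsilon,\beta,1}^2+\beta\|\nabla\sigma\|_{\varepsilon-1,\beta}^2\bigr\}\leq 0,
\end{equation*}
and the key boundary estimate $\int_{\mathbb{R}^{N-1}}\sigma_{x_1}^2(t,0,x')dx'$ is recovered exactly as in \eqref{C65}, since that identity used only \eqref{C15}$_4$ and integration by parts and is insensitive to the regime. Higher-order estimates are then propagated through the differentiated system \eqref{C77} using Lemmas \ref{Clem13}--\ref{Clem15}, exactly as in Lemmas \ref{Clem6}--\ref{Clem8}. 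The time-weighted induction of Kawashima--Matsumura with $\xi=(\lambda-\varepsilon)+\kappa$, combined with the elliptic estimates of Lemma \ref{Clem11} applied to $\eqref{C15}_4$, gives \eqref{C109}.

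The only real obstacle I anticipate is bookkeeping: organizing the analogue of the quadratic form \eqref{C39}--\eqref{C43} so that the leading part really is controlled by the constant-coefficient matrix coming from the strict Bohm criterion. Once this algebra is checked, the smallness of $\phi_b$ and $N_{\lambda,\beta}(T)/\beta$ (together with $\beta\leq\delta$) makes every perturbative contribution harmless, and the rest of the argument is a direct simplification of Section 2. This is why the authors indicate that the details of the nondegenerate proof can be omitted.
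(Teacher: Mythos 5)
Your proposal is essentially correct and follows the same strategy the paper actually uses: for part (i) redo the energy identities with the exponential weight $e^{\beta x_1}$ (noting $\partial_{x_1}e^{\beta x_1}=\beta e^{\beta x_1}$ gives full dissipation without weight loss) and close with Gronwall since $\alpha\leq\delta\beta$; for part (ii) repeat Lemmas \ref{Clem3}--\ref{Clem8} with the algebraic weight, using the exponential decay \eqref{C11} of the stationary profile to demote every $G^{-2},G^{-3}$-type term to a harmless error, and finish with the Kawashima--Matsumura time-weighted induction. One small caution: "the positivity analysis analogous to \eqref{C40} is trivial" glosses over the fact that the $\sigma^2$ and $\sigma\psi_1$ contributions still require the Cauchy--Schwarz step together with multiplying $\eqref{C15}_4$ by $-\varepsilon\beta\sigma W_{\varepsilon-1,\beta}$ to trade $\sigma^2$ for $\varphi^2$ and $|\nabla\sigma|^2$ (the analogue of \eqref{C41}, carried out in the paper as \eqref{C113}--\eqref{C115}); only after this substitution does the resulting quadratic form in $(\varphi,\psi_1,\zeta)$ become a constant-coefficient form whose positive definiteness is equivalent to the strict Bohm inequality $m u_\infty^2>1+\gamma R T_\infty$.
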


The proof of Proposition \ref{CProp2} will be proved by following Lemma \ref{Clem9}$-$Lemma \ref{Clem10} at the end of this section.
\begin{lem}\label{Clem9}
 Under the same assumption as in Proposition \ref{CProp2}, the following estimates hold for any $\xi\geq0$ and $t\in [0,T]$ that
\begin{align}\label{C110}
(1+\beta t)^\xi&\Vert(\vartheta,\nabla\varphi,{\rm div} \psi,\nabla\zeta)(t)\Vert_{\varepsilon,\beta}^2+\int_0^t(1+\beta\tau)^\xi\beta\Vert(\vartheta,\nabla\varphi,{\rm div}\psi,\nabla\zeta,\nabla\sigma)(\tau)\Vert_{\varepsilon-1,\beta}^2d\tau\nonumber\\
\leq C\Vert\vartheta_0\Vert_{\varepsilon,\beta,1}^2&+C\xi\int_0^t(1+\beta\tau)^{\xi-1}\beta\Vert\vartheta(\tau)\Vert_{\varepsilon,\beta,1}^2d\tau+C\delta\int_0^t
(1+\beta\tau)^\xi\beta\Vert\nabla\psi(\tau)\Vert_{\varepsilon-1,\beta}^2d\tau,
\end{align}
\begin{align}\label{C111}
(1+&\beta t)^\xi\Vert\vartheta(t)\Vert_{\varepsilon,\beta,1}^2+\int_0^t(1+\beta\tau)^\xi\beta\Vert\nabla\vartheta(\tau)\Vert_{\varepsilon-1,\beta,1}^2d\tau\nonumber\\
\leq &C\Vert\vartheta_0\Vert_{\varepsilon,\beta,1}^2+C\xi\int_0^t(1+\beta\tau)^{\xi-1}\beta\Vert\vartheta(\tau)\Vert_{\varepsilon,\beta,1}^2d\tau\nonumber\\
&+C\int_0^t(1+\beta\tau)^\xi\beta\Vert\nabla\sigma(\tau)\Vert_{\varepsilon-1,\beta}^2d\tau+C\int_0^t(1+\beta \tau)^\xi\int_{\mathbb{R}^{N-1}}\sigma_{x_1}^2(\tau,0,x')dx'd\tau,
\end{align}
and
\begin{align}\label{C112}
(1+&\beta t)^\xi\Vert\vartheta(t)\Vert_{\varepsilon,\beta}^2+\int_0^t(1+\beta \tau)^\xi\int_{\mathbb{R}^{N-1}}\sigma_{x_1}^2(\tau,0,x')dx'd\tau\nonumber\\
\leq &C\Vert\vartheta_0\Vert_{\varepsilon,\beta,1}^2+C\xi\int_0^t(1+\beta\tau)^{\xi-1}\beta\Vert\vartheta(\tau)\Vert_{\varepsilon,\beta,1}^2d\tau+C\int_0^t(1+\beta\tau)^\xi\beta\Vert(\vartheta,\nabla\varphi,{\rm div}\psi,\nabla\zeta)(\tau)\Vert_{\varepsilon-1,\beta}^2d\tau,
\end{align}
where positive constants $\delta$ and $C$ are independent of $T$.
\end{lem}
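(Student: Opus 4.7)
My plan is to mirror the proofs of Lemma \ref{Clem3}, Lemma \ref{Clem4}, and Lemma \ref{Clem5} one after another, since \eqref{C110}, \eqref{C111}, \eqref{C112} are the exact nondegenerate analogues of \eqref{C102}, \eqref{C103}, \eqref{C104}. The crucial simplification is that the stationary solution now decays exponentially by \eqref{C11}, so every coefficient carrying a spatial derivative of $\tilde n,\tilde u,\tilde T,\tilde\phi$ is pointwise $O(|\phi_b|e^{-cx_1})$ and is absorbed by the smallness of $\phi_b$. Moreover, because \eqref{C92} is a strict Bohm criterion, the leading-order quadratic form produced by the weight derivative is already positive definite at the far field; the delicate Sagdeev cancellation that forced $\lambda<\lambda_0$ in the degenerate case is no longer needed, and the $G(x_1)^{-k}$ contributions disappear outright.

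For \eqref{C110} I would reuse the combined identity \eqref{C30}: test $RT\tilde n\varphi$, $\tilde n\psi$, $\tfrac{R\tilde n}{(\gamma-1)T}\zeta$ against the first three equations of \eqref{C15} for the zero-order energy; test $\tilde n\nabla\varphi$, $\tilde n\,{\rm div}\psi$, $\tilde n\nabla\zeta$ against their divergence/gradient counterparts for the first-derivative energy; and add $\tfrac12\tilde n^2\varphi^2$ from \eqref{C29}. Multiplying by $W_{\varepsilon,\beta}$ and integrating over $\mathbb R_+^N$ gives the analogue of \eqref{C31} with boundary/weight-derivative terms $I_1,\ldots,I_5$. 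As in \eqref{C33}--\eqref{C35}, $I_2,I_4\geq 0$ and $I_3\geq c\beta\|(\nabla\varphi,{\rm div}\psi,\nabla\zeta)\|_{\varepsilon-1,\beta}^2$, while $I_5$ is now pointwise $O(|\phi_b|e^{-cx_1})$ and absorbs. For $I_1$ the leading-order quadratic form in $(\varphi,\psi_1,\zeta,\sigma)$ is
\[
\tfrac{|u_\infty|(1+RT_\infty)}{2}\varphi^2-RT_\infty\varphi\psi_1+\tfrac{m|u_\infty|}{2}\psi_1^2-R\psi_1\zeta+\tfrac{R|u_\infty|}{2(\gamma-1)T_\infty}\zeta^2+\psi_1\sigma,
\]
and testing \eqref{C28} against $-\varepsilon\beta W_{\varepsilon-1,\beta}\sigma$ exactly as in \eqref{C41} converts the cross term $\psi_1\sigma$ into a structure which, together with the rest, is positive definite precisely because $u_\infty^2>(\gamma RT_\infty+1)/m$ strictly; this extracts $c\beta\|(\varphi,\psi,\zeta,\nabla\sigma)\|_{\varepsilon-1,\beta}^2$. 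The nonlinear remainder $\mathcal N_1$ is bounded by $C\delta$ times the dissipation, as in \eqref{C50}--\eqref{C52}.

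For \eqref{C111} I would repeat the above but replace ${\rm div}\,\eqref{C15}_2$ by $\nabla\,\eqref{C15}_2$, so that $\tfrac{\tilde n}{2}m|\nabla\psi|^2$ appears in the first-derivative energy; this is the analogue of \eqref{C54}--\eqref{C59}. The only new feature is that the flux $(\widetilde{\mathcal H}_1)_1$ contains $\tilde n({\rm div}\psi\,\sigma_{x_1}-\partial_{x_1}\psi\cdot\nabla\sigma)$, whose trace on $\{x_1=0\}$ is not signed; a Cauchy--Schwarz estimate puts the boundary remainder $C\int_{\mathbb R^{N-1}}\sigma_{x_1}^2(t,0,x')\,dx'$ on the right-hand side, exactly as in \eqref{C60}. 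Everything else is controlled as in \eqref{C110}.

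The main obstacle, and precisely the content of \eqref{C112}, is to close the chain by bounding that boundary integral. Following Lemma \ref{Clem5} I would apply $e^{-v}\partial_t$ to $\eqref{C15}_4$, test against $\sigma$, and use the first three equations of \eqref{C15} to eliminate every time derivative. The sole troublesome term produced is $\tilde u\sigma_{x_1}(e^{\varphi}-1)$, which via $\eqref{C15}_4$ I rewrite as $\tilde u\sigma_{x_1}\{e^{-\tilde v}\Delta\sigma+e^{-\tilde v-\tilde\phi}(e^{-\sigma}-1)\}$ plus quadratic errors. Multiplying by $W_{\varepsilon,\beta}$, integrating by parts, and invoking the Dirichlet condition $\sigma(t,0,x')=0$ yields the identity \eqref{C65}, whose boundary contribution is $\int_{\mathbb R^{N-1}}\tfrac{-\tilde u(0)e^{-\tilde v(0)}}{2}\sigma_{x_1}^2\,dx'$, strictly positive because $\tilde u(0)<0$; this is the key mechanism that captures the missing $\int_{\mathbb R^{N-1}}\sigma_{x_1}^2$. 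The interior zero-order terms then reorganize, as in \eqref{C73}, into a sum of squares in $(\varphi-|u_\infty|^{-1}\psi_1)$, $(\psi_1-\tfrac{|u_\infty|}{(\gamma-1)T_\infty}\zeta)$, $(\sigma+|u_\infty|^{-1}\psi_1)$, and $|\psi'|$, which is where the strict Bohm condition is again consumed. Lemma \ref{Clem11} disposes of the $\sigma_t,\sigma_{x_1 t}$ that appear along the way, and the nonlinear errors are $O(\delta)$ times the right-hand side; multiplying by $(1+\beta\tau)^\xi$ and integrating over $(0,t)$ gives \eqref{C112}.
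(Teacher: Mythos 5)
Your proposal is correct and follows essentially the same route as the paper: \eqref{C110}, \eqref{C111}, \eqref{C112} are proved exactly as the degenerate analogues in Lemmas \ref{Clem3}--\ref{Clem5}, with $I_1$ handled by Cauchy--Schwarz on $\psi_1\sigma$ followed by the Poisson test (the paper's \eqref{C113}--\eqref{C115}), the $I_5$ and nonlinear terms absorbed via the exponential decay \eqref{C11}, and the boundary term $\int_{\mathbb{R}^{N-1}}\sigma_{x_1}^2\,dx'$ captured through the identity \eqref{C65} in the third step. One small imprecision worth noting: testing \eqref{C28} against $-\varepsilon\beta W_{\varepsilon-1,\beta}\sigma$ does not ``convert'' the cross term $\psi_1\sigma$ directly; as in \eqref{C113}--\eqref{C114} you must first Cauchy--Schwarz $\psi_1\sigma\geq -\frac{|u_\infty|}{2}\sigma^2-\frac{1}{2|u_\infty|}\psi_1^2$ (reducing the $\psi_1^2$ coefficient to $\frac{m|u_\infty|^2-1}{2|u_\infty|}$) and only then invoke the Poisson test to trade the resulting $\sigma^2$ for $\varphi^2$ and $|\nabla\sigma|^2$---the strict Bohm inequality then makes the residual $(\varphi,\psi_1,\zeta)$-form positive definite, which is where $\lambda\geq 2$ from Theorem \ref{CThm1}(ii) also enters in controlling the remainders.
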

\begin{proof}
As for the proof of \eqref{C110}, we need to reevaluate the terms $I_1$, $I_5$, and $\int_{\mathbb{R}_+^N}W_{\varepsilon,\beta}\mathcal{N}_1dx$ in \eqref{C31}.
For the estimate of $I_1$, it holds that
\begin{align}\label{C113}
I_1\geq \int_{\mathbb{R}_+^N}\varepsilon\beta W_{\varepsilon-1,\beta}&\left\{\frac{(1+RT_\infty)}{2}\lvert u_\infty\rvert\varphi^2-RT_\infty\varphi\psi_1+\frac{m}{2}\lvert
u_\infty\rvert\psi_1^2-R\psi_1\zeta+\frac{R\lvert u_\infty\rvert}{2(\gamma-1)T_\infty}\zeta^2+\psi_1\sigma\right\}dx\nonumber\\
&+c\beta\Vert\psi'\Vert_{\varepsilon-1,\beta}^2-C(N_{\lambda,\beta}(T)+\phi_b)\Vert(\vartheta,\sigma)\Vert_{\varepsilon-1,\beta}^2\nonumber\\
\geq \int_{\mathbb{R}_+^N}\varepsilon\beta W_{\varepsilon-1,\beta}&\left\{\frac{(1+RT_\infty)}{2}\lvert u_\infty\rvert\varphi^2-RT_\infty\varphi\psi_1+\frac{m\lvert u_\infty\rvert^2-1}{2\lvert u_\infty\rvert}\psi_1^2-R\psi_1\zeta+\frac{R\lvert u_\infty\rvert}{2(\gamma-1)T_\infty}\zeta^2\right.\nonumber\\
&\left.-\frac{\lvert u_\infty\rvert}{2}\sigma^2\right\}dx+c\beta\Vert\psi'\Vert_{\varepsilon-1,\beta}^2-C(N_{\lambda,\beta}(T)+\phi_b)\Vert(\vartheta,\sigma)\Vert_{\varepsilon-1,\beta}^2,
\end{align}
where $\psi':=(\psi_2,...,\psi_N)$ and the derivation of the second inequality utilizes the Cauchy-Schwarz inequality
$\sigma\psi_1\geq-\left(\frac{\lvert u_\infty \rvert}{2}\lvert\sigma\rvert^2+\frac{1}{2\lvert u_\infty \rvert}\psi_1^2\right)$.

Now we need to deal with the term $-\frac{\lvert u_\infty\rvert}{2}\int_{\mathbb{R}_+^N}\varepsilon\beta W_{\varepsilon-1,\beta} \sigma^2dx$.
Precisely, we multiply $\eqref{C15}_4$ by $-\varepsilon\beta\sigma W_{\varepsilon-1,\beta}$ and integrate the result over $\mathbb{R}_+^N$ to get
\begin{align*}
&\int_{\mathbb{R}_+^N} \varepsilon\beta W_{\varepsilon-1,\beta}\lvert\nabla\sigma\rvert^2dx+\int_{\mathbb{R}_+^N}\frac{1}{2} \varepsilon(\varepsilon-1)(\varepsilon-2)\beta^3 W_{\varepsilon-3,\beta}\sigma^2dx\nonumber\\
\leq &-\int_{\mathbb{R}_+^N}\varepsilon\beta W_{\varepsilon-1,\beta}(\varphi\sigma+\sigma^2)dx+C(N_{\lambda,\beta}(T)+\phi_b)\int_{\mathbb{R}_+^N}\varepsilon\beta
W_{\varepsilon-1,\beta}\varphi^2dx\nonumber\\
\leq &\int_{\mathbb{R}_+^N}\frac{1}{2}\varepsilon\beta W_{\varepsilon-1,\beta}\varphi^2dx-\int_{\mathbb{R}_+^N}\frac{1}{2}\varepsilon\beta W_{\varepsilon-1,\beta}\sigma^2dx+C(N_{\lambda,\beta}(T)+\phi_b)\int_{\mathbb{R}_+^N}\varepsilon\beta
W_{\varepsilon-1,\beta}\varphi^2dx,
\end{align*}
then
\begin{align}\label{C114}
&\int_{\mathbb{R}_+^N}\frac{\lvert u_\infty\rvert}{2}\varepsilon\beta W_{\varepsilon-1,\beta}\varphi^2dx-\int_{\mathbb{R}_+^N}\frac{\lvert u_\infty\rvert}{2}\varepsilon\beta W_{\varepsilon-1,\beta}\sigma^2dx\nonumber\\
\geq &\int_{\mathbb{R}_+^N}\lvert u_\infty\rvert\varepsilon\beta W_{\varepsilon-1,\beta}\lvert\nabla\sigma\rvert^2dx-C(N_{\lambda,\beta}(T)+\phi_b+\beta^3)\Vert\varphi\Vert_{\varepsilon-1,\beta}^2,
\end{align}
where we have used the elliptic estimate in Lemma \ref{Clem11}.
Substituting \eqref{C114} into \eqref{C113}, using \eqref{C92} and \eqref{C116} gives that
\begin{align}\label{C115}
I_1\geq &\int_{\mathbb{R}_+^N}\varepsilon\beta W_{\varepsilon-1,\beta}\left\{\frac{RT_\infty}{2}\lvert u_\infty\rvert\varphi^2-RT_\infty\varphi\psi_1+\frac{m\lvert u_\infty\rvert^2-1}{2\lvert u_\infty\rvert}\psi_1^2-R\psi_1\zeta+\frac{R\lvert u_\infty\rvert}{2(\gamma-1)T_\infty}\zeta^2\right\}dx\nonumber\\
&+c\beta\Vert(\psi',\nabla\sigma)\Vert_{\varepsilon-1,\beta}^2
-C(N_{\lambda,\beta}(T)+\phi_b+\beta^3)\Vert(\vartheta,\sigma)\Vert_{\varepsilon-1,\beta}^2\nonumber\\
\geq &(c-C\delta)\beta\Vert(\vartheta, \nabla\sigma)\Vert_{\varepsilon-1,\beta}^2.
\end{align}

From \eqref{C11}, \eqref{C15}, \eqref{C116}, $\lambda\geq 2$,  the Sobolev inequality, the Cauchy-Schwarz inequality and the elliptic estimate in Lemma \ref{Clem11}, we can easily get
\begin{align}\label{C117}
\lvert I_5\rvert+\left\lvert\int_{\mathbb{R}_+^N}W_{\varepsilon,\beta}\mathcal{N}_1dx\right\rvert\leq C(N_{\lambda,\beta}+\phi_b)\beta\Vert\vartheta\Vert_{\varepsilon-1,\beta,1}^2
\leq C\beta\delta\Vert\vartheta\Vert_{\varepsilon-1,\beta,1}^2.
\end{align}
With the estimates \eqref{C115}$-$\eqref{C117} and \eqref{C33}$-$\eqref{C35} in hand, we can immediately get
\begin{align}\label{C118}
\frac{d}{dt}\int_{\mathbb{R}_+^N}W_{\varepsilon,\beta}\left(e^{-\tilde{\phi}}\mathcal{E}_0+\mathcal{E}_1+\frac{1}{2}\tilde{n}^2\varphi^2\right)dx
+\beta\Vert(\vartheta,\nabla\varphi, {\rm div}\psi, \nabla\zeta, \nabla\sigma)\Vert_{\varepsilon-1,\beta}^2
\leq C\delta\beta\Vert\nabla\psi\Vert_{\varepsilon-1,\beta}^2,
\end{align}
provided that $\delta>0$ is sufficiently small, where $\mathcal{E}_0$ and $\mathcal{E}_1$ are defined in \eqref{C20} and \eqref{C25} respectively. Multiply \eqref{C118} by $(1+\beta\tau)^\xi$ and integrate over $(0,t)$ to get \eqref{C110}. The estimates \eqref{C111} and \eqref{C112} have the same derivation as \eqref{C103} and \eqref{C104}, we omit the detials.
\end{proof}

Applying the same computational arguments on \eqref{C110}$-$\eqref{C112} used in \eqref{C76}, one has
\begin{align}\label{C122}
(1+\beta t)^\xi&\Vert\vartheta(t)\Vert_{\varepsilon,\beta,1}^2+\int_0^t(1+\beta\tau)^\xi\beta
\Vert(\vartheta,\nabla\vartheta,\nabla\sigma)(\tau)\Vert_{\varepsilon-1,\beta}^2d\tau \nonumber\\
&\leq C\Vert\vartheta(0)\Vert_{\varepsilon,\beta,1}^2+C\xi\int_0^t(1+\beta\tau)^{\xi-1}\beta\Vert\vartheta(\tau)\Vert_{\varepsilon,\beta,1}^2d\tau.
\end{align}
\begin{lem}\label{Clem10}
 Under the same assumption as in Proposition \ref{CProp2}, the following estimates hold for any $\xi\geq0$ and $t\in [0,T]$ that
\begin{align}\label{C119}
(1&+\beta t)^\xi\Vert\partial_{yz}(\vartheta,\nabla\varphi,{\rm div} \psi,\nabla\zeta)(t)\Vert_{\varepsilon,\beta}^2+\int_0^t(1+\beta\tau)^\xi\beta\Vert\partial_{yz}(\vartheta,\nabla\varphi,{\rm div}\psi,\nabla\zeta,\nabla\sigma)(\tau)\Vert_{\varepsilon-1,\beta}^2d\tau\nonumber\\
\leq &C\Vert\partial_{yz}(\vartheta,\nabla\varphi,{\rm div} \psi,\nabla\zeta)(0)\Vert_{\varepsilon,\beta}^2+C\xi\int_0^t(1+\beta\tau)^{\xi-1}\beta\Vert\partial_{yz}(\vartheta,\nabla\varphi,{\rm div}\psi,\nabla\zeta)(\tau)\Vert_{\varepsilon,\beta}^2d\tau\nonumber\\
&+C\delta\int_0^t(1+\beta\tau)^\xi \beta \Vert \vartheta(\tau)\Vert_{\varepsilon-1,\beta,3}^2d\tau,
\end{align}

\begin{align}\label{C120}
(1+&\beta t)^\xi\Vert\partial_{yz}\vartheta(t)\Vert_{\varepsilon,\beta,1}^2+\int_0^t(1+\beta\tau)^\xi\beta\Vert\partial_{yz}\vartheta(\tau)\Vert_{\varepsilon-1,\beta,1}^2d\tau\nonumber\\
\leq &C\Vert\partial_{yz}\vartheta(0)\Vert_{\varepsilon,\beta,1}^2+C\xi\int_0^t(1+\beta\tau)^{\xi-1}\beta\Vert\partial_{yz}\vartheta(\tau)\Vert_{\varepsilon,\beta,1}^2d\tau\nonumber\\
&+C\int_0^t(1+\beta\tau)^\xi\beta\Vert\partial_{yz}\nabla\sigma(\tau)\Vert_{\varepsilon-1,\beta}^2d\tau+C\int_0^t(1+\beta \tau)^\xi\int_{\mathbb{R}^{N-1}}(\partial_{yz}\sigma_{x_1})^2(\tau,0,x')dx'd\tau\nonumber\\
&+C\delta\int_0^t(1+\beta\tau)^\xi \beta \Vert \vartheta(\tau)\Vert_{\varepsilon-1,\beta,3}^2d\tau,
\end{align}
and
\begin{align}\label{C121}
(1+&\beta t)^\xi\Vert\partial_{yz}\vartheta(t)\Vert_{\varepsilon,\beta}^2+\int_0^t(1+\beta \tau)^\xi\int_{\mathbb{R}^{N-1}}(\partial_{yz}\sigma_{x_1})^2(\tau,0,x')dx'd\tau\nonumber\\
\leq &C\Vert\partial_{yz}\vartheta(0)\Vert_{\varepsilon,\beta}^2+C\Vert\vartheta_0\Vert_{\varepsilon,\beta,1}^2
+C\xi\int_0^t(1+\beta\tau)^{\xi-1}\beta\Vert\partial_{yz}\vartheta(\tau)\Vert_{\varepsilon,\beta}^2d\tau\nonumber\\
&+C\int_0^t(1+\beta\tau)^\xi\beta\Vert\partial_{yz}(\vartheta,\nabla\varphi,{\rm div}\psi,\nabla\zeta,\nabla\sigma)(\tau)\Vert_{\varepsilon-1,\beta}^2d\tau\nonumber\\
&+C\delta\int_0^t(1+\beta\tau)^\xi \beta \Vert \vartheta(\tau)\Vert_{\varepsilon-1,\beta,3}^2d\tau,
\end{align}
where positive constants $\delta$ and $C$ are independent of $T$.
\end{lem}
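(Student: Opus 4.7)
The plan is to transcribe the three-step energy analysis of Lemma \ref{Clem9} to the $\partial_{yz}$-differentiated system, where $(y,z)$ ranges over the tangential spatial variables $x_2,\ldots,x_N$ and the time variable $t$ but never involves $\partial_{x_1}$. Applying $\partial_{yz}$ to \eqref{C15} yields a system of exactly the same principal form as \eqref{C15}, driven by the commutator source terms $G_1,G_2,G_3$ from \eqref{C78} plus stationary-coefficient lower-order contributions involving $(\tilde v_{x_1},\tilde u_{x_1},\tilde T_{x_1})$, whose sizes are exponentially small by \eqref{C11}. Crucially, the boundary condition \eqref{C137} still holds because $\partial_{yz}$ commutes with the trace at $\{x_1=0\}$, so every boundary manipulation performed in Lemma \ref{Clem9} remains legal for the differentiated unknowns.

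For \eqref{C119}, I would multiply $RT\partial_{yz}\eqref{C15}_1$, $\partial_{yz}\eqref{C15}_2$ and $\frac{R}{(\gamma-1)T}\partial_{yz}\eqref{C15}_3$ by $\tilde n\partial_{yz}\varphi$, $\tilde n\partial_{yz}\psi$, $\tilde n\partial_{yz}\zeta$ respectively, then repeat with $\nabla$- and ${\rm div}$-composed multipliers as in the passage from \eqref{C19} to \eqref{C30}, weight by $e^{-\tilde\phi}$, use the Poisson equation $\partial_{yz}\eqref{C15}_4$ to convert $\tilde n\,{\rm div}\partial_{yz}\psi\,\partial_{yz}\sigma$ into a time derivative plus lower-order remainder, multiply by $W_{\varepsilon,\beta}$ and integrate over $\mathbb R_+^N$. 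The zero-order positivity argument of \eqref{C113}--\eqref{C115} transfers verbatim to $(\partial_{yz}\varphi,\partial_{yz}\psi_1,\partial_{yz}\zeta,\partial_{yz}\sigma)$, since its coefficients depend only on far-field constants and on $\tilde\phi$. For \eqref{C120}, I would swap ${\rm div}\partial_{yz}\psi$ for the full gradient $\nabla\partial_{yz}\psi$ in the second-level energy as in the derivation of $\widetilde{\mathcal E}_1$ in \eqref{C55}; integration by parts of the $\nabla{\rm div}\psi\cdot\nabla\sigma$-type term then produces the boundary trace $\int_{\mathbb R^{N-1}}(\partial_{yz}\sigma_{x_1})^2(t,0,x')\,dx'$ exactly as in \eqref{C60}. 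For \eqref{C121}, the crucial step is to apply $e^{-v}\partial_t$ to $\partial_{yz}\eqref{C15}_4$, multiply by $\partial_{yz}\sigma$, and integrate against $W_{\varepsilon,\beta}$; the key identity \eqref{C65} survives differentiation by $\partial_{yz}$ because $\partial_{yz}\sigma\big|_{x_1=0}=0$, delivering the positive boundary term $c\int(\partial_{yz}\sigma_{x_1})^2(t,0,x')\,dx'$ which carries the left-hand side. In all three cases, multiplying the resulting pointwise-in-$t$ differential inequality by $(1+\beta\tau)^\xi$ and integrating over $(0,t)$ produces the stated time-weighted estimate, and the elliptic estimate of Lemma \ref{Clem11} recovers $\partial_{yz}\sigma$-norms from $\partial_{yz}\varphi$-norms at the initial time.

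The main obstacle will be bookkeeping for the commutator sources $G_1,G_2,G_3$ together with the quasilinear coefficient variations. Unlike in Lemmas \ref{Clem6}--\ref{Clem8}, where the degenerate structure forced a delicate matching of $G(x_1)^{-k}$ powers, here the exponential decay in \eqref{C11} makes the stationary-coefficient remainders harmless, and every cubic contribution of the form $\partial_{yz}\vartheta\cdot\nabla\vartheta\cdot\nabla\vartheta$ or $\partial_z\vartheta\cdot\nabla\partial_y\vartheta\cdot\nabla\vartheta$ is estimated by placing the lowest-order derivative factor in $L^\infty$ via the Sobolev embedding $H^{[N/2]+1}\hookrightarrow L^\infty$, bounding it by $CN_\lambda(T)\leq C\delta$ under \eqref{C116}, and distributing the remaining two factors in $L^2$ with weight $W_{\varepsilon,\beta}$. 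This produces precisely the absorbable error $C\delta\int_0^t(1+\beta\tau)^\xi\beta\|\vartheta(\tau)\|_{\varepsilon-1,\beta,3}^2\,d\tau$ appearing on the right-hand sides of \eqref{C119}--\eqref{C121}, at which point the three estimates can be combined (with small coefficients) in the same manner as \eqref{C76} and \eqref{C122} to close the a priori bound.
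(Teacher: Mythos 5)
Your proposal is correct and takes essentially the same approach as the paper, which simply refers back to Lemmas \ref{Clem6}--\ref{Clem8} (the degenerate analogues) and notes the argument carries over with the simplifications afforded by the exponential decay \eqref{C11}. You flesh this out accurately: the differentiated system \eqref{C77} with commutators \eqref{C78}, the preserved boundary condition \eqref{C137} (hence the $\partial_{yz}$-version of the key identity \eqref{C65}), the three-tier energy structure producing first the divergence-level estimate, then the full-gradient estimate with the boundary trace, then the boundary-trace control via $e^{-v}\partial_t$ applied to the Poisson equation, and the Sobolev-embedding treatment of the cubic commutator terms yielding the absorbable $C\delta\int(1+\beta\tau)^\xi\beta\|\vartheta\|_{\varepsilon-1,\beta,3}^2\,d\tau$ error.
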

\begin{proof}
Similarly, we can refer to the Lemma \ref{Clem6}$-$Lemma \ref{Clem8} to prove the estimates \eqref{C119}$-$\eqref{C121} respectively.
\end{proof}
We now present the proof of the Proposition \ref{CProp2}(ii).

For the case of $N=2,3$, by the same procedure as in deriving \eqref{C89}, using \eqref{C119}$-$\eqref{C121}, we get
\begin{align}\label{C123}
(1+\beta t)^\xi&\Vert\partial_{yz}\vartheta(t)\Vert_{\varepsilon,\beta,1}^2+\int_0^t(1+\beta\tau)^\xi\beta
\Vert\partial_{yz}(\vartheta,\nabla\vartheta,\nabla\sigma)(\tau)\Vert_{\varepsilon-1,\beta}^2d\tau \nonumber\\
\leq &C\Vert\partial_{yz}\vartheta(0)\Vert_{\varepsilon,\beta,1}^2+C\Vert\vartheta(0)\Vert_{\varepsilon,\beta,1}^2
+C\xi\int_0^t(1+\beta\tau)^{\xi-1}\beta\Vert\partial_{yz}\vartheta(\tau)\Vert_{\varepsilon,\beta,1}^2d\tau \nonumber\\
&+C\delta\int_0^t(1+\beta\tau)^\xi \beta \Vert \vartheta(\tau)\Vert_{\varepsilon-1,\beta,3}^2d\tau.
\end{align}
It follows from \eqref{C122} and \eqref{C123}, taking $\delta>0$ sufficiently small, that
\begin{align}\label{C124}
(1+\beta t)^\xi&\Vert\vartheta(t)\Vert_{\varepsilon,\beta,3}^2+\int_0^t\beta(1+\beta\tau)^\xi\Vert\vartheta(\tau)\Vert_{\varepsilon-1,\beta,3}^2d\tau\nonumber\\
&\leq C\Vert\vartheta(0)\Vert_{\varepsilon,\beta,3}^2+C\xi\int_0^t(1+\beta\tau)^{\xi-1}\beta\Vert\vartheta(\tau)\Vert_{\varepsilon,\beta,3}^2d\tau.
\end{align}

For the case of $N=1$, as in \eqref{C125}, we have
\begin{align}\label{C127}
(1+&\beta t)^\xi\Vert\vartheta_t\Vert_{\varepsilon,\beta,1}^2+\int_0^t(1+\beta\tau)^\xi\beta\Vert(\vartheta_t,\vartheta_{xt},\sigma_{xt})(\tau)\Vert_{\varepsilon-1,\beta}^2d\tau\nonumber\\
\leq &C\Vert\vartheta_t(0)\Vert_{\varepsilon,\beta,1}^2+C\xi\int_0^t(1+\beta\tau)^{\xi-1}\beta\Vert\vartheta_t\Vert_{\varepsilon,\beta,1}^2d\tau+C\delta\int_0^t(1+\beta\tau)^\xi\beta\Vert\vartheta(\tau)\Vert_{\varepsilon-1,\beta,2}^2d\tau.
\end{align}
Following the derivation of \eqref{C126}, one has
\begin{align}\label{C128}
(1+\beta t)^\xi&\Vert\vartheta(t)\Vert_{\varepsilon,\beta,2}^2+\int_0^t(1+\beta\tau)^\xi\beta\Vert\vartheta(\tau)\Vert_{\varepsilon-1,\beta,2}^2d\tau\nonumber\\
&\leq C\Vert\vartheta(0)\Vert_{\varepsilon,\beta,2}^2+C\xi\int_0^t(1+\beta\tau)^{\xi-1}\beta\Vert\vartheta(\tau)\Vert_{\varepsilon,\beta,2}^2d\tau.
\end{align}

By applying the same induction argument on \eqref{C124} and \eqref{C128} as in \cite{KM1985} and \cite{MN1998} with $\xi=(\lambda-\varepsilon)/3+\kappa(\kappa>0)$ and the elliptic estimates in Lemma \ref{Clem11} yields
\begin{align}\label{C129}
(1+\beta t)&^{\lambda-\varepsilon+\kappa}\left(\Vert\vartheta(t)\Vert_{\varepsilon,\beta,s}^2+\Vert\sigma(t)\Vert_{\varepsilon,\beta,s+2}^2\right)\nonumber\\
&+\int_0^t(1+\beta \tau)^{\lambda-\varepsilon+\kappa}\beta\left(\Vert\vartheta(\tau)\Vert_{\varepsilon,\beta,s}^2+\Vert\sigma(\tau)\Vert_{\varepsilon,\beta,s+2}^2\right)d\tau\leq C(1+\beta t)^\kappa\Vert\vartheta_0\Vert_{\lambda,\beta,s}^2.
\end{align}
Hence, we complete the proof of Proposition \ref{CProp2}.

\section*{Appendix A.}
In this appendix, we will give some basic results used in the proofs of Proposition \ref{CProp1} and Proposition \ref{CProp2}. The lemmas below are similar to ones obtained in \cite{NOS2012}.
\appendix
\begin{lem}\label{Clem11}
Assume $\sigma$ and $\varphi$ satisfy $\eqref{C15}_4$ and $s=[N/2]+2$.
\item {\rm(i)}\ \ Let $((1+\mu x_1)^{\lambda/2}\varphi,(1+\mu x_1)^{\lambda/2}\sigma)\in \mathscr{X}_s^0([0,T])\times\mathscr{X}_s^2([0,T])$ for positive constants
$\lambda$ and $\mu$. Then, for any constant $c_0\in(0,2]$, there exist positive constants $\delta$ and $C$ independent of $T$ such that if all the conditions $\alpha\leq\lambda$, $\beta\in(0,\mu]$, $\lvert\alpha\beta\rvert\leq c_0$, and $\lvert\phi_b\rvert+N_{\lambda,\beta}(T)\leq \delta$ are satisfied, then $\sigma$ satisfies $(1+\beta x_1)^{\alpha/2}\sigma\in \mathscr{X}_s^2([0,T])$ with
\begin{align}\label{C130}
\Vert(1+\beta x_1)^{\alpha/2}\partial_t^i\sigma\Vert_{H^j}\leq C\Vert(1+\beta x_1)^{\alpha/2}\varphi\Vert_{H^{i+j-2}},\ \ i\in\mathbb{Z}\cap[0,2],\ \ j\in\mathbb{Z}\cap[2,4-i].
\end{align}
\item {\rm(ii)}\ \ Let $(e^{\lambda x_1/2}\varphi,e^{\lambda x_1/2}\sigma)\in \mathscr{X}_s^0([0,T])\times\mathscr{X}_s^2([0,T])$ for positive constants
$\lambda$. Then, for any constant $c_0\in(0,\sqrt{2}]$, there exist positive constants $\delta$ and $C$ independent of $T$ such that if the conditions
$\beta\in(0,c_0]$ and $\lvert\phi_b\rvert+N_{\lambda,\beta}(T)\leq \delta$ are satisfied, then $e^{\beta x_1/2}\sigma$ satisfies $\sigma\in \mathscr{X}_s^2([0,T])$ with
\begin{align}\label{C131}
\Vert e^{\beta x_1/2}\partial_t^i\sigma\Vert_{H^j}\leq C\Vert e^{\beta x_1/2}{\alpha/2}\varphi\Vert_{H^{i+j-2}},\ \ i\in\mathbb{Z}\cap[0,2],\ \ j\in\mathbb{Z}\cap[2,4-i].
\end{align}
\end{lem}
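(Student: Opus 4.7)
\bigskip

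\noindent\textbf{Proof proposal for Lemma \ref{Clem11}.}
The plan is to treat the Poisson equation $\Delta\sigma = n - e^{-\phi}$ from $\eqref{C15}_4$ as a linear elliptic equation for $\sigma$ with a positive zeroth-order coefficient, after absorbing all the nonlinear pieces into the source. Starting from the Taylor expansion \eqref{C28}, rewrite the equation as
\begin{equation*}
-\Delta\sigma + e^{-\tilde\phi}\sigma = -\tilde n\,\varphi + \mathcal{M}(\varphi,\sigma),\qquad \mathcal{M}(\varphi,\sigma)=-\tfrac12\tilde n e^{\theta_1\varphi}\varphi^2+\tfrac12 e^{-\tilde\phi-\theta_2\sigma}\sigma^2,
\end{equation*}
together with the Dirichlet condition \eqref{C17}. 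The coefficient $e^{-\tilde\phi}$ is bounded below by a positive constant thanks to \eqref{C11}/\eqref{C13} and the smallness of $\phi_b$, and the nonlinear part $\mathcal{M}$ can be controlled by $N_{\lambda,\beta}(T)$ in $L^\infty$ via the Sobolev embedding $H^s\hookrightarrow L^\infty$ for $s=[N/2]+2$.

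For part (i), I would test the equation against $W_{\alpha,\beta}\sigma=(1+\beta x_1)^\alpha\sigma$ and integrate by parts over $\mathbb{R}_+^N$. The boundary contribution vanishes by \eqref{C17}, and one obtains
\begin{equation*}
\int_{\mathbb{R}_+^N} W_{\alpha,\beta}\bigl(|\nabla\sigma|^2+e^{-\tilde\phi}\sigma^2\bigr)\,dx
=-\int_{\mathbb{R}_+^N}\alpha\beta W_{\alpha-1,\beta}\sigma\sigma_{x_1}\,dx
+\int_{\mathbb{R}_+^N} W_{\alpha,\beta}\bigl(-\tilde n\varphi+\mathcal{M}\bigr)\sigma\,dx.
\end{equation*}
The first term on the right is bounded by $|\alpha\beta|$ times $\tfrac12\int W_{\alpha,\beta}|\nabla\sigma|^2+\tfrac12\int W_{\alpha,\beta}\sigma^2\beta^2(1+\beta x_1)^{-2}$; since $(1+\beta x_1)^{-2}\le 1$ and $|\alpha\beta|\le c_0\le 2$, it can be absorbed into the left-hand side (the factor $c_0\le 2$ is what gives the sharp absorption threshold). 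The nonlinear $\mathcal{M}$-term is absorbed via $\|\sigma,\varphi\|_{L^\infty}\le C N_{\lambda,\beta}(T)\le C\delta$, leaving $\|W_{\alpha,\beta}^{1/2}\sigma\|_{H^1}\le C\|W_{\alpha,\beta}^{1/2}\varphi\|_{L^2}\cdot(\text{lower-order})$, which handles the $i=0$, $j=2$ case after invoking standard elliptic regularity (commuting $\partial_{x_j}$ for $j\ge 2$ with $\Delta$ and the weight, and recovering $\partial_{x_1x_1}\sigma$ algebraically from the equation itself).

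For higher derivatives ($j=3,4$ and $i=1,2$), I would differentiate $\eqref{C15}_4$ in tangential directions $\partial_{x'}$ and in time $\partial_t$; because the boundary data \eqref{C17} is constant in $(t,x')$, the Dirichlet condition is preserved for $\partial_t^i\partial_{x'}^{k}\sigma$. Iterating the weighted $H^1$ energy inequality above on these differentiated equations and again recovering $x_1$-derivatives algebraically from $\Delta$ yields \eqref{C130}. Part (ii) is entirely parallel: one tests against $e^{\beta x_1}\sigma$, the integration-by-parts constant becomes $\beta^2$ instead of $|\alpha\beta|$, and the threshold $\beta\le c_0\le\sqrt 2$ plays the same absorption role.

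The main technical obstacle is tracking the weighted commutator terms so that the sharp constants $c_0\le 2$ (resp.\ $c_0\le\sqrt 2$) are genuinely what allows absorption: the worst term comes from $[\Delta,W]\sigma$ and produces a quadratic-in-weight-derivative contribution $\alpha(\alpha-1)\beta^2(1+\beta x_1)^{\alpha-2}\sigma^2$, which is what forces the scaling condition. Away from this, the rest is bookkeeping with Sobolev embeddings and the smallness of $\phi_b$ and $N_{\lambda,\beta}(T)$.
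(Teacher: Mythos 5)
Your approach is exactly what the paper means by ``the standard elliptic estimate on $\eqref{C15}_4$,'' which it invokes without proof; you have reconstructed it faithfully. Rewriting the Poisson equation with the linear term $e^{-\tilde\phi}\sigma$ isolated, testing against the weighted function, killing the boundary terms using $\sigma(t,0,x')=0$, absorbing the weight-commutator, and then obtaining higher $j$ by recovering $\partial_{x_1}^2\sigma$ algebraically from the equation while differentiating tangentially and in $t$ (which both preserve the homogeneous Dirichlet data) is precisely the standard argument. Two small inaccuracies worth fixing: in the displayed absorption estimate the factor $\beta^2(1+\beta x_1)^{-2}$ should just be $(1+\beta x_1)^{-2}$, coming from $W_{\alpha-1,\beta}^2=W_{\alpha,\beta}W_{\alpha-2,\beta}$ in Cauchy--Schwarz; and the concluding claim that the worst term is the $\alpha(\alpha-1)\beta^2(1+\beta x_1)^{\alpha-2}\sigma^2$ from a second integration by parts is slightly off for part (i), since that route would only yield the threshold $|\alpha\beta|\le\sqrt 2$ (because $\alpha(\alpha-1)\beta^2\approx(\alpha\beta)^2$ for large $\alpha$), whereas the stated $c_0\le 2$ comes from balancing the cross term via Cauchy--Schwarz against \emph{both} $|\nabla\sigma|^2$ and $e^{-\tilde\phi}\sigma^2$ without the second integration by parts. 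For part (ii) the second integration by parts is exact (since $\partial_{x_1}^2 e^{\beta x_1}=\beta^2 e^{\beta x_1}$) and gives the $\beta^2/2<e^{-\tilde\phi}$ condition, hence the $\sqrt 2$ threshold.
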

\begin{proof}
The above estimates can be derived by the standard elliptic estimate on $\eqref{C15}_4$. For brevity, we omit their proofs.
\end{proof}

\begin{lem}\label{Clem12}
Under the same assumptions as in either Proposition \ref{CProp1} for the degenerate case or Proposition \ref{CProp2}(ii) for the nondegenerate case, it holds for any $t\in[0,T]$ and $\alpha\leq\lambda/2$ that
\begin{align}\label{C132}
\Vert ((1+\beta x_1)^\alpha\vartheta,(1+\beta x_1)^\alpha\nabla\vartheta)(t)\Vert_{L^\infty(\mathbb{R}_+^N)}\leq CN_{\lambda,\beta}(T),
\end{align}
\begin{align}\label{C133}
\Vert (1+\beta x_1)^\alpha\vartheta_t(t)\Vert_{L^\infty(\mathbb{R}_+^N)}\leq CN_{\lambda,\beta}(T).
\end{align}
\end{lem}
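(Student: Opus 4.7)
The statement to prove (Lemma \ref{Clem12}) gives pointwise bounds with the algebraic weight $(1+\beta x_1)^\alpha$ on $\vartheta$, $\nabla\vartheta$, and $\vartheta_t$ in terms of the weighted Sobolev norm $N_{\lambda,\beta}(T)=\sup_{t\in[0,T]}\|\vartheta(t)\|_{\lambda,\beta,s}$, where $s=[N/2]+2$ and $\alpha\le\lambda/2$. My overall strategy is standard: Sobolev embedding in weighted spaces gives \eqref{C132}, while \eqref{C133} is reduced to \eqref{C132} by using the evolution system \eqref{C15} to replace $\vartheta_t$ by spatial quantities (with the help of the elliptic estimate of Lemma \ref{Clem11} to control $\nabla\sigma$).

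The plan for \eqref{C132} is as follows. For $N=1,2,3$ and $s=[N/2]+2$, one has $s-1>N/2$, so $H^{s}(\mathbb{R}_+^N)\hookrightarrow W^{1,\infty}(\mathbb{R}_+^N)$. Since the weighted norm satisfies $\|\vartheta\|_{\lambda,\beta,s}\simeq\|(1+\beta x_1)^{\lambda/2}\vartheta\|_{H^s}$, Sobolev embedding yields
\[
\|(1+\beta x_1)^{\lambda/2}\vartheta\|_{W^{1,\infty}}\le C\|\vartheta\|_{\lambda,\beta,s}\le CN_{\lambda,\beta}(T).
\]
Expanding $\nabla\bigl((1+\beta x_1)^{\lambda/2}\vartheta\bigr)=(1+\beta x_1)^{\lambda/2}\nabla\vartheta+(\lambda\beta/2)(1+\beta x_1)^{\lambda/2-1}\mathbf e_1\vartheta$ gives
\[
(1+\beta x_1)^{\lambda/2}|\nabla\vartheta|\le \bigl|\nabla\bigl((1+\beta x_1)^{\lambda/2}\vartheta\bigr)\bigr|+C\beta(1+\beta x_1)^{\lambda/2}|\vartheta|,
\]
so $\|(1+\beta x_1)^{\lambda/2}\nabla\vartheta\|_{L^\infty}\le CN_{\lambda,\beta}(T)$ as well. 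Finally, since $\alpha\le\lambda/2$ implies $(1+\beta x_1)^\alpha\le(1+\beta x_1)^{\lambda/2}$ on $\mathbb{R}_+$, dominating $\alpha$ by $\lambda/2$ produces \eqref{C132}.

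For \eqref{C133} I would use the reformulated system \eqref{C15} to write
\[
\vartheta_t=-u\cdot\nabla\vartheta-A(T)\nabla\vartheta+B\nabla\sigma+(\text{lower-order linear terms in }\vartheta\text{ with coefficients from }\tilde v_{x_1},\tilde U_{x_1},\tilde T_{x_1},\nabla\tilde v),
\]
schematically, with $A(T)$ a bounded matrix and $B$ constant. Multiplying by $(1+\beta x_1)^\alpha$ and taking $L^\infty$, each term is handled by the weighted $L^\infty$ bound just obtained: the stationary-solution coefficients are bounded by \eqref{C11} (nondegenerate) or \eqref{C13} (degenerate); the nonlinear pieces such as $(1+\beta x_1)^\alpha\psi\!\cdot\!\nabla\vartheta$ are estimated by $\|\psi\|_{L^\infty}\|(1+\beta x_1)^\alpha\nabla\vartheta\|_{L^\infty}\le C(N_{\lambda,\beta}(T))^2\le CN_{\lambda,\beta}(T)$ using the smallness $N_{\lambda,\beta}(T)\le C\delta$ from \eqref{C101} or \eqref{C116}. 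For $\nabla\sigma$, Lemma \ref{Clem11} gives $\|(1+\beta x_1)^{\lambda/2}\sigma\|_{H^{s+2}}\le C\|(1+\beta x_1)^{\lambda/2}\varphi\|_{H^s}$; combined with the Sobolev embedding $H^{s+1}\hookrightarrow W^{1,\infty}$, this yields $\|(1+\beta x_1)^\alpha\nabla\sigma\|_{L^\infty}\le CN_{\lambda,\beta}(T)$ by the same weight-splitting argument as above. Summing all contributions gives \eqref{C133}.

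No step is especially hard; the only care needed is bookkeeping of the weight in the product rule (so the factor $(1+\beta x_1)^{\lambda/2-1}$ picked up by differentiating the weight is controlled by $(1+\beta x_1)^{\lambda/2}$), and verifying the Sobolev threshold $s>N/2+1$ across $N=1,2,3$. Both are routine, so the entire argument is short and the authors may justifiably only sketch it.
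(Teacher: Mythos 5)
Your proof is correct and follows essentially the same route as the paper: Sobolev embedding $H^{s-1}\hookrightarrow L^\infty$ (here stated as $H^s\hookrightarrow W^{1,\infty}$) together with handling the weight commutator and using $\alpha\le\lambda/2$ for \eqref{C132}, then the system \eqref{C15}, the elliptic estimate of Lemma \ref{Clem11}(i), and the decay of the stationary solution from \eqref{C11} or \eqref{C13} to obtain \eqref{C133}. The only cosmetic difference is that you commute the weight past $\nabla$ at the $L^\infty$ level while the paper absorbs $\nabla$ inside the weighted $H^{s-1}$ norm; this is the same calculation.
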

\begin{proof}
By applying the Sobolev inequality, we have
\begin{align*}
\Vert &((1+\beta x_1)^\alpha\vartheta,(1+\beta x_1)^\alpha\nabla\vartheta)(t)\Vert_{L^\infty(\mathbb{R}_+^N)}\\
&\leq C\Vert ((1+\beta x_1)^\alpha\vartheta,(1+\beta x_1)^\alpha\nabla\vartheta)(t)\Vert_{H^{s-1}(\mathbb{R}_+^N)}\\
&\leq C\Vert ((1+\beta x_1)^\alpha\vartheta(t)\Vert_{H^{s}(\mathbb{R}_+^N)}\\
&\leq CN_{\lambda,\beta}(T),
\end{align*}
which give the estimate of \eqref{C132}. The estimate \eqref{C133} immediately follows from \eqref{C132} owing to \eqref{C15}, Lemma \ref{Clem11}(i), and \eqref{C11} for nondegenerate case or \eqref{C13} for degenerate case.
\end{proof}

\begin{lem}\label{Clem13}
For the nondegenerate case, we assume the same conditions as in Proposition \ref{CProp2}(ii) and let $\delta$ suitably small. Then it holds for $\nu=\varepsilon-1$ or $\varepsilon$ that
 \begin{align}\label{C134}
\Vert \partial_t^i\vartheta(t)\Vert_{\nu,\beta,j}\leq C\Vert\vartheta(t)\Vert_{\nu,\beta,i+j},\ \ {\rm for} \ \ (i,j)\in \{(i.j)\in \mathbb{Z}^2|i,j\geq0, i+j\leq[N/2]+2\}.
\end{align}
For the degenerate case, we assume the same conditions as in Proposition \ref{CProp1} and let $\delta$ be suitably small. Then it holds for $\nu=\varepsilon-3,\varepsilon-1$ or $\varepsilon$ that
 \begin{align}\label{C135}
\Vert\vartheta_t(t)\Vert_{\nu,\beta,j}&\leq C\Vert(\nabla\vartheta,\nabla\sigma)(t)\Vert_{\nu,\beta}+C\beta\Vert\vartheta(t)\Vert_{\nu-2,\beta}\leq C\Vert\vartheta(t)\Vert_{\nu,\beta,1},\ \ N=1,2,3,\nonumber\\
\Vert(\nabla\vartheta_t,\vartheta_{tt})(t)\Vert_{\nu,\beta}&\leq\Vert\vartheta(t)\Vert_{\nu,\beta,2}, \ \ N=1,2,3,\nonumber\\
\Vert(\nabla^2\vartheta_t,\nabla\vartheta_{tt},\vartheta_{ttt})(t)\Vert_{\nu,\beta}&\leq C\Vert(\nabla\vartheta,\nabla^3\vartheta,\nabla\sigma,\nabla\sigma_{tt})(t)\Vert_{\nu,\beta}
+C\beta\Vert(\vartheta,\nabla^2\vartheta)(t)\Vert_{\nu-2,\beta}\nonumber\\
&\leq C\Vert\vartheta(t)\Vert_{\nu,\beta,3},\ \ N=2,3.
\end{align}
\end{lem}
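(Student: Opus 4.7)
The central idea is to treat $\eqref{C15}_{1-3}$ as a first-order evolutionary system that lets us trade every time derivative of $\vartheta$ for spatial derivatives of $\vartheta$ and $\sigma$, and then invoke Lemma \ref{Clem11} to replace any derivative of $\sigma$ by one of $\varphi$. The first step is to rewrite $\eqref{C15}_{1-3}$ schematically as
\[
\vartheta_t = \mathcal{B}(u,T)\nabla\vartheta + \mathcal{C}\,\nabla\sigma + \mathcal{D}(x_1)\,\vartheta,
\]
where $\mathcal{B}, \mathcal{C}$ are smooth matrices bounded in $L^\infty$ uniformly by Lemma \ref{Clem12}, and $\mathcal{D}(x_1)$ collects the coefficients $\tilde{v}_{x_1}, \tilde{U}_{x_1}, \tilde{T}_{x_1}$ inherited from the stationary profile. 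Taking the weighted $L^2(\mathbb{R}_+^N)$ norm immediately gives
\[
\|\vartheta_t\|_{\nu,\beta} \leq C\,\|(\nabla\vartheta,\nabla\sigma)\|_{\nu,\beta} + C\,\|\mathcal{D}\vartheta\|_{\nu,\beta}.
\]

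For the last term the nondegenerate case is trivial: \eqref{C11} gives the exponential bound $|\mathcal{D}| \leq C|\phi_b|e^{-cx_1}$, and it is absorbed without loss, establishing \eqref{C134} for $(i,j)=(1,0)$. The degenerate case is the delicate one: \eqref{C13} only furnishes the algebraic decay $|\mathcal{D}| \leq CG(x_1)^{-3}$, and one must exploit the constraint $\beta/(\Gamma\phi_b^{1/2}) \in [\theta,1]$ from Proposition \ref{CProp1} to derive the key comparison
\[
G(x_1)^{-1} \leq C_\theta\,\beta\,(1+\beta x_1)^{-1},\qquad x_1 \geq 0,
\]
which follows directly from $G(x_1)=\Gamma x_1 + \phi_b^{-1/2}$. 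This converts one inverse power of $G$ into one factor of $\beta$ together with one unit of weight decay. Applying it three times yields $\|\mathcal{D}\vartheta\|_{\nu,\beta} \leq C\beta^3\|\vartheta\|_{\nu-6,\beta} \leq C\beta\|\vartheta\|_{\nu-2,\beta}$ after absorbing $\beta^2 \leq C\delta$, which is exactly the term appearing in the first inequality of \eqref{C135}. The second inequality of that line comes from Lemma \ref{Clem11}, which provides $\|\nabla\sigma\|_{\nu,\beta} \leq C\|\varphi\|_{\nu,\beta}$, so that the right-hand side collapses into $C\|\vartheta\|_{\nu,\beta,1}$.

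The higher-order estimates \eqref{C134} and the last two lines of \eqref{C135} are then obtained by induction. Applying $\nabla$ or $\partial_t$ to the schematic equation for $\vartheta_t$ expresses $(\nabla\vartheta_t,\vartheta_{tt})$ in terms of $(\nabla^2\vartheta,\nabla^2\sigma)$ together with commutator terms such as $\mathcal{B}_t\nabla\vartheta$ or $(\nabla\mathcal{B})\nabla\vartheta$; Lemma \ref{Clem12} makes their coefficients small ($\lesssim N_{\lambda,\beta}(T)\leq \delta\beta^3$ in the degenerate case, $\lesssim \delta$ in the nondegenerate case), so they are harmless for $\delta$ small. A further round of differentiation produces $(\nabla^2\vartheta_t,\nabla\vartheta_{tt},\vartheta_{ttt})$; at this order the forcing terms $\nabla\sigma$ and $\nabla\sigma_{tt}$ must be kept explicit on the right-hand side, because eliminating them through Lemma \ref{Clem11} would cost a fourth spatial derivative of $\vartheta$, which is not available in the norm $\|\vartheta\|_{\nu,\beta,3}$.

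The main obstacle will be the algebraic bookkeeping in the degenerate case: each differentiation striking the stationary profile costs one power of $G$, which must be systematically converted to $\beta(1+\beta x_1)^{-1}$ and then split between an explicit factor of $\beta$ on the right and a weight shift $\nu \mapsto \nu-2$ in the norm, so as to match the structure of \eqref{C135} precisely. Once this conversion scheme is in place and the $\sigma$-eliminations afforded by Lemma \ref{Clem11} are consistently inserted at each order, the proof reduces to a finite catalogue of commutator estimates parallel to the analogous one-dimensional computations of \cite{DYZ2021} and the three-dimensional isentropic versions in \cite{NOS2012}.
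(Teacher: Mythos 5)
Your approach is essentially the same as the paper's: rewrite $\eqref{C15}_{1-3}$ as a first-order evolution system to trade time derivatives of $\vartheta$ for spatial derivatives of $\vartheta$ and $\sigma$, invoke Lemma~\ref{Clem11}(i) to eliminate $\sigma$-derivatives in favour of $\varphi$, and in the degenerate case convert inverse powers of $G$ into factors of $\beta$ plus weight shifts via the constraint $\beta/(\Gamma\phi_b^{1/2})\in[\theta,1]$ (your key comparison $G(x_1)^{-1}\leq (\theta\Gamma)^{-1}\beta(1+\beta x_1)^{-1}$ is correct). One inaccuracy in your last paragraph: you claim $\nabla\sigma$ and $\nabla\sigma_{tt}$ must be kept explicit in the third line of \eqref{C135} because eliminating them ``would cost a fourth spatial derivative of $\vartheta$.'' This is not so: Lemma~\ref{Clem11}(i) with $(i,j)=(0,2)$ gives $\Vert\nabla\sigma\Vert_{\nu,\beta}\leq\Vert\sigma\Vert_{\nu,\beta,2}\leq C\Vert\varphi\Vert_{\nu,\beta}$, and with $(i,j)=(2,2)$ it gives $\Vert\nabla\sigma_{tt}\Vert_{\nu,\beta}\leq\Vert\sigma_{tt}\Vert_{\nu,\beta,2}\leq C\Vert\varphi\Vert_{\nu,\beta,2}$, so only three spatial derivatives of $\vartheta$ are needed—this is precisely how the lemma's displayed chain collapses to the final bound $C\Vert\vartheta\Vert_{\nu,\beta,3}$. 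The $\sigma$-terms appear explicitly in \eqref{C135} merely as a convenient intermediate form (they are used verbatim in the energy lemmas), not because they resist elimination.
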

\begin{proof}
These estimates are derived by the governing equations \eqref{C18} as well as the time-derivative system with the help of Lemma \ref{Clem11}(i).
\end{proof}

\begin{lem}\label{Clem14}
For the nondegenerate case, we assume the same conditions as in Proposition \ref{CProp2}(ii) and let $\delta$ suitably small. Then it holds for $\nu=\varepsilon-1,\varepsilon$ that
\begin{align*}
\Vert\partial_{x_1}\vartheta(t)\Vert_{\nu,\beta}&\leq C\Vert((\partial_t,\nabla')\vartheta,\vartheta)(t)\Vert_{\nu,\beta},\ \ \ \ N=1,2,3,\\
\Vert\nabla\partial_{x_1}\vartheta(t)\Vert_{\nu,\beta}&\leq C\Vert((\partial_t,\nabla')^2\vartheta,(\partial_t,\nabla')\vartheta,\vartheta)(t)\Vert_{\nu,\beta},\ \ \ \ N=1,2,3,\\
\Vert\nabla\partial_{x_1}^2\vartheta(t)\Vert_{\nu,\beta}&\leq C\Vert((\partial_t,\nabla')^3\vartheta,(\partial_t,\nabla')^2\vartheta,(\partial_t,\nabla')\vartheta,\vartheta)(t)\Vert_{\nu,\beta},\ \ \ \ N=2,3,
\end{align*}
where $\nabla'$ denotes a derivative with respect to the spatial variable other than $x_1$.

For the degenerate case, we assume the same conditions as in Proposition \ref{CProp1} and let $\delta$ suitably small. Then it holds for $\nu=\varepsilon-3,\varepsilon-1,\varepsilon$ that
\begin{align*}
\Vert\partial_{x_1}\vartheta(t)\Vert_{\nu,\beta}&\leq C\Vert((\partial_t,\nabla')\vartheta,\nabla\sigma)(t)\Vert_{\nu,\beta}+C\beta\Vert\vartheta(t)\Vert_{\nu-2,\beta}\\
&\leq C\Vert((\partial_t,\nabla')\vartheta,\vartheta)(t)\Vert_{\nu,\beta},\ \ \ \ N=1,2,3,\\
\Vert\nabla\partial_{x_1}\vartheta(t)\Vert_{\nu,\beta}&\leq C\Vert((\partial_t,\nabla')^2\vartheta,\nabla\vartheta,\vartheta)(t)\Vert_{\nu,\beta}\\
&\leq C\Vert((\partial_t,\nabla')^2\vartheta,(\partial_t,\nabla')\vartheta,\vartheta)(t)\Vert_{\nu,\beta},\ \ \ \ N=1,2,3,\\
\Vert\nabla\partial_{x_1}^2\vartheta(t)\Vert_{\nu,\beta}&\leq C\Vert((\partial_t,\nabla')^2\nabla\vartheta,\nabla\vartheta,\nabla\sigma)(t)\Vert_{\nu,\beta}+C\beta\Vert((\partial_t,\nabla')^2\vartheta,\vartheta)(t)\Vert_{\nu-2,\beta}\\
&\leq C\Vert((\partial_t,\nabla')^3\vartheta,(\partial_t,\nabla')^2\vartheta,(\partial_t,\nabla')\vartheta,\vartheta)(t)\Vert_{\nu,\beta}
,\ \ \ \ N=2,3,
\end{align*}
\end{lem}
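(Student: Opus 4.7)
The strategy is to exploit the hyperbolic structure of \eqref{C15} in the $x_1$-direction to trade $x_1$-derivatives of $\vartheta$ against time derivatives, tangential derivatives, $\vartheta$ itself, and $\nabla\sigma$. Writing $V=(\varphi,\psi,\zeta)^{T}$, the system $\eqref{C15}_{1,2,3}$ can be recast in quasilinear form
\begin{equation*}
A_{0}(V)\,\partial_{t}V+A_{1}(u_{1},T)\,\partial_{x_{1}}V+\sum_{j=2}^{N}A_{j}(u,T)\,\partial_{x_{j}}V
=e_{\sigma}\otimes\nabla\sigma+F(\vartheta;\tilde v_{x_{1}},\tilde u_{x_{1}},\tilde T_{x_{1}}),
\end{equation*}
where $F$ collects the lower-order source terms on the right-hand side of \eqref{C15}. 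The matrix $A_{1}$ is diagonalizable with eigenvalues exactly $\lambda_{i}[u_{1},T]$ of \eqref{C18}, which are uniformly bounded away from zero under the Bohm criterion together with the smallness hypotheses on $\lvert\phi_{b}\rvert$ and $N_{\lambda,\beta}(T)$ (or $N_{\lambda}(T)$). Consequently $A_{1}^{-1}$ exists with coefficients that are smooth bounded functions of $(u_{1},T)$.

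Multiplying by $A_{1}^{-1}$ solves algebraically for the first $x_{1}$-derivative:
\begin{equation*}
\partial_{x_{1}}V=-A_{1}^{-1}A_{0}\,\partial_{t}V-\sum_{j=2}^{N}A_{1}^{-1}A_{j}\,\partial_{x_{j}}V+A_{1}^{-1}e_{\sigma}\otimes\nabla\sigma+A_{1}^{-1}F.
\end{equation*}
Taking the weighted $L^{2}$-norm $\|\cdot\|_{\nu,\beta}$ of this identity gives a bound of the form
$\|\partial_{x_{1}}\vartheta\|_{\nu,\beta}\le C\|((\partial_{t},\nabla')\vartheta,\nabla\sigma)\|_{\nu,\beta}+\|A_{1}^{-1}F\|_{\nu,\beta}$. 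In the nondegenerate case \eqref{C11} yields $|\tilde v_{x_{1}}|+|\tilde u_{x_{1}}|+|\tilde T_{x_{1}}|\le C|\phi_{b}|e^{-cx_{1}}$, so the $F$-contribution is absorbed as $C\delta\|\vartheta\|_{\nu,\beta}$, and Lemma \ref{Clem11}(i) disposes of the $\nabla\sigma$-term by $\|\nabla\sigma\|_{\nu,\beta}\le C\|\varphi\|_{\nu,\beta}$. In the degenerate case \eqref{C13} gives $|\tilde v_{x_{1}}|+|\tilde u_{x_{1}}|+|\tilde T_{x_{1}}|\le CG(x_{1})^{-3}$, and the assumption $\beta\in[\theta\Gamma\phi_{b}^{1/2},\Gamma\phi_{b}^{1/2}]$ from \eqref{C98} converts this into $CG^{-3}\le C\beta(1+\beta x_{1})^{-2}$ after using $G^{-1}\le C\phi_{b}^{1/2}(1+\beta x_{1})^{-1}$; this is exactly what produces the extra term $C\beta\|\vartheta\|_{\nu-2,\beta}$ in the statement.

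For the higher-order estimates, I would apply $\nabla$ to the solved identity and use the same algebraic inversion on the resulting equation for $\partial_{x_{1}}(\nabla V)$. Commutators arising from the application of $\nabla$ to $A_{1}^{-1}, A_{j}$ are either lower-order in $\vartheta$ (absorbed by the smallness of $N_{\lambda,\beta}(T)/\beta^{3}$ or $N_{\lambda}(T)$) or involve one further derivative of the stationary solution, for which \eqref{C11}/\eqref{C13} deliver the required decay. Iterating once more on $\partial_{x_{1}}(\nabla\partial_{x_{1}}V)$ produces the $\nabla\partial_{x_{1}}^{2}\vartheta$ estimate, where Lemma \ref{Clem11} is applied with $(i,j)=(0,3)$ or $(0,4)$ to bound the new $\nabla^{2}\sigma$, $\nabla^{3}\sigma$ occurrences by $\vartheta$ in the appropriate norm. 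Finally, the auxiliary time-derivative bounds in Lemma \ref{Clem13} allow the replacement of mixed expressions like $\|(\partial_{t},\nabla')^{k}\vartheta\|_{\nu,\beta}$ by $\|\vartheta\|_{\nu,\beta,k}$, yielding the concluding inequalities in both cases.

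The principal difficulty will be the degenerate case at top order. There, two issues must be balanced simultaneously: the stationary-solution coefficients $\partial_{x_{1}}^{k}(\tilde v,\tilde u,\tilde T)\sim G^{-(k+2)}$ decay only like $(1+\beta x_{1})^{-(k+2)}$ after the $G$-to-$\beta$ conversion, so each derivative taken buys only a factor $\beta$ in the weight exponent (matching the shift $\nu\to\nu-2$), and the inversion loses no derivatives but requires $A_{1}^{-1}$ to be smooth at order up to $[N/2]+2$, which forces us to track the commutators carefully under the smallness hypothesis $N_{\lambda,\beta}(T)/\beta^{3}\le\delta$. Once this bookkeeping is done, each of the three claimed inequalities in the degenerate case follows by the two-step chain displayed in the statement: the first step is the raw algebraic bound from the inverted system, the second is the application of Lemma \ref{Clem11} to eliminate $\nabla\sigma$ in favor of $\varphi\subset\vartheta$.
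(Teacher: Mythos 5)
Your proposal is correct and is essentially the argument the paper delegates to (the cited works \cite{NOS2012,DYZ2021} use precisely this device): because every characteristic speed in the $x_1$-direction is nonzero under the Bohm criterion and the smallness hypotheses, the coefficient matrix $A_1$ multiplying $\partial_{x_1}V$ is uniformly invertible, so one solves algebraically for $\partial_{x_1}V$ in terms of $\partial_t V$, $\nabla'V$, $\nabla\sigma$, $V$, and the stationary source terms, then iterates by differentiating this relation; the elliptic estimate of Lemma \ref{Clem11} removes $\nabla\sigma$, and the rate conversion $G^{-1}\le C\beta(1+\beta x_1)^{-1}$ under \eqref{C98} produces the $C\beta\|\vartheta\|_{\nu-2,\beta}$ term in the degenerate case. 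Your accounting of the weight shift and of the role of $N_{\lambda,\beta}(T)/\beta^3\le\delta$ in controlling the commutators is consistent with the stated bounds.
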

\begin{proof}
It is proved by the similar argument as in \cite{DYZ2021,NOS2012}, we omit the details.
\end{proof}
\begin{lem}\label{Clem15}
Under the same assumptions as in Proposition \ref{CProp1}, for the case of $N=2,3$, it holds that
\begin{align}\label{C136}
\left\lvert\int W_{\alpha,\beta}f\partial^{(2)}g\partial^{(2)}hdx\right\rvert\leq CN_{\lambda,\beta}(T)(\Vert f\Vert_{\alpha-1,\beta}^2+\Vert\vartheta\Vert_{\alpha-3.\beta,3}^2),
\end{align}
where $\partial^{(2)}g$ and $\partial^{(2)}h$ denote the functions obtained by differentiating $\varphi$ or $\psi_1,...,\psi_N$ twice by any coordinates and $f$ is an any function which satisfies $(1+\beta x_1)^{(\alpha-1)/2}f\in L^2$.
\end{lem}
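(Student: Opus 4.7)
\emph{Proof plan.} Since $f$ is only assumed to satisfy $(1+\beta x_1)^{(\alpha-1)/2}f\in L^{2}$ and carries no smoothness, an integration by parts to move a derivative off $\partial^{(2)}g$ or $\partial^{(2)}h$ would necessarily land one on $f$, which we cannot control. My approach is therefore a purely weighted H\"older/Sobolev estimate. The plan is to split the weight as
\begin{equation*}
(1+\beta x_1)^{\alpha} = (1+\beta x_1)^{(\alpha-1)/2}\cdot (1+\beta x_1)^{2}\cdot (1+\beta x_1)^{(\alpha-3)/2},
\end{equation*}
attaching the three pieces to $f$, $\partial^{(2)}g$ and $\partial^{(2)}h$ respectively, and then apply H\"older with exponents $(2,3,6)$ for $N=3$ and $(2,4,4)$ for $N=2$. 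The first factor reproduces $\Vert f\Vert_{\alpha-1,\beta}$ exactly, so it remains to control $\Vert(1+\beta x_1)^{2}\partial^{(2)}g\Vert_{L^{p_2}}$ and $\Vert(1+\beta x_1)^{(\alpha-3)/2}\partial^{(2)}h\Vert_{L^{p_3}}$, where $(p_2,p_3)=(3,6)$ or $(4,4)$.

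The factor $N_{\lambda,\beta}(T)$ is extracted from the $g$-term. Because $\lambda\geq 4$ in the degenerate regime of Theorem \ref{CThm2}, the inequality $(1+\beta x_1)^{2}\leq (1+\beta x_1)^{\lambda/2}$ holds pointwise on $\mathbb{R}_+$, so both $\Vert(1+\beta x_1)^{2}\partial^{(2)}g\Vert_{L^{2}}$ and $\Vert\nabla[(1+\beta x_1)^{2}\partial^{(2)}g]\Vert_{L^{2}}$ are bounded by $CN_{\lambda,\beta}(T)$; the $\beta$-prefactor arising when $\nabla$ hits the weight is harmless, the resulting exponent $(1+\beta x_1)^{1}$ still being dominated by $(1+\beta x_1)^{\lambda/2}$. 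For $N=3$ I would then combine the interpolation $\Vert u\Vert_{L^{3}}^{2}\leq C\Vert u\Vert_{L^{2}}\Vert u\Vert_{L^{6}}$ with Sobolev $\Vert u\Vert_{L^{6}}\leq C\Vert u\Vert_{H^{1}}$; for $N=2$ the Ladyzhenskaya inequality $\Vert u\Vert_{L^{4}}^{2}\leq C\Vert u\Vert_{L^{2}}\Vert\nabla u\Vert_{L^{2}}$ applies directly. Either way, $\Vert(1+\beta x_1)^{2}\partial^{(2)}g\Vert_{L^{p_{2}}}\leq CN_{\lambda,\beta}(T)$. An identical Sobolev/Ladyzhenskaya argument applied to $u=(1+\beta x_1)^{(\alpha-3)/2}\partial^{(2)}h$, combined with $(1+\beta x_1)^{\alpha-5}\leq (1+\beta x_1)^{\alpha-3}$ on $\mathbb{R}_+$ to absorb the weight-derivative term, yields $\Vert(1+\beta x_1)^{(\alpha-3)/2}\partial^{(2)}h\Vert_{L^{p_{3}}}\leq C\Vert\vartheta\Vert_{\alpha-3,\beta,3}$. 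Multiplying the three estimates and closing with Young's inequality $ab\leq\tfrac{1}{2}(a^{2}+b^{2})$ produces \eqref{C136}.

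The main technical obstacle will be this weight bookkeeping: at every step where $\nabla$ is commuted past a weight $(1+\beta x_1)^{\mu}$ inside an $H^{1}$-computation, I must check that the shifted exponent and the emerging $\beta$-prefactor can still be absorbed either by $N_{\lambda,\beta}(T)$ (on the $g$-side, where I have $\lambda\geq 4$ of room) or by $\Vert\vartheta\Vert_{\alpha-3,\beta,3}$ (on the $h$-side, where the trivial pointwise bound $(1+\beta x_1)^{\mu-1}\leq (1+\beta x_1)^{\mu}$ on $\mathbb{R}_+$ suffices). The constraint $\lambda\geq 4$ from Theorem \ref{CThm2} and the smallness of $\beta$ from \eqref{C98} are precisely what make every such comparison go through cleanly.
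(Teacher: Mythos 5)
Your proposal is correct and follows essentially the same route as the paper: decompose the weight $(1+\beta x_1)^\alpha$ into the three pieces attached to $f$, $\partial^{(2)}g$, and $\partial^{(2)}h$; apply H\"older; bound the $g$- and $h$-factors via weighted Sobolev embedding, using $\lambda\geq 4$ to convert the $g$-factor into $N_{\lambda,\beta}(T)$ and the trivial monotonicity of $(1+\beta x_1)$ to absorb the weight-derivative terms; and close with Young's inequality. The only cosmetic difference is the H\"older split $(2,3,6)$ for $N=3$: the paper uses $(2,4,4)$ uniformly for $N=2,3$ via $H^1\hookrightarrow L^4$, which makes both dimensions go through in a single line.
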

\begin{proof}
By using Cauchy-Schwartz inequality, Sobolev inequality and $H\ddot{o}lder$ inequality, we get
\begin{align*}
&\left\lvert \int W_{\alpha,\beta}f\partial^{(2)}g\partial^{(2)}hdx\right\rvert\\
&\leq \Vert f\Vert_{\alpha-1,\beta}\Vert W_{2,\beta}\partial^{(2)}g\Vert_{L^4}\Vert W_{(\alpha-3)/2,\beta}\partial^{(2)}h\Vert_{L^4}\\
&\leq C\Vert f\Vert_{\alpha-1,\beta}\Vert\partial^{(2)}g\Vert_{4,\beta,1}\Vert\partial^{(2)}h\Vert_{\alpha-3,\beta,1}\\
&\leq C\Vert\partial^{(2)}g\Vert_{\lambda,\beta,1}(\Vert\partial^{(2)}h\Vert_{\alpha-3,\beta,1}^2+\vert f\Vert_{\alpha-1,\beta}^2)\\
&\leq CN_{\lambda,\beta}(T)(\Vert\vartheta\Vert_{\alpha-3,\beta,3}^2+\Vert f\Vert_{\alpha-1,\beta}^2).
\end{align*}
\end{proof}

\section*{Acknowledgement}
Lei Yao's research  was partially supported by National Natural Science Foundation of China
$\#$12171390, $\#$11931013, and the Fundamental Research Funds for the Central Universities under Grant: G2023KY05102.
Haiyan Yin's  research  was partially supported by National Natural Science Foundation of China $\#$12071163, and
the Natural Science Foundation of Fujian Province of China $\#$2021J01305, $\#$2020J01071.


\begin{thebibliography}{99}

\bibitem{A2006}
A. Ambroso, Stability for solutions of a stationary Euler-Poisson problem, Math. Models Methods Appl. Sci., 16 (2006), 1817-1837.

\bibitem{AMR2001}
A. Ambroso, F. M${\rm \acute{e}}$hats and P. A. Raviart, On singular perturbation problems for the
nonlinear Poisson equation, Asymptot. Anal., 25 (2001), 39-91.

\bibitem{DB1949}
D. Bohm, Minimum ionic kinetic energy for a stable sheath, in The Characteristics of Electrical Discharges in Magnetic Fields, A. Guthrie and R. K. Wakerling, eds., McGraw-Hill, New York, 1949, 77-86.

\bibitem{quasi1}
S. Cordier and E. Grenier, Quasineutral limit of an Euler-Poisson system arising
from plasma physics, Commun. Par. Differ. Equ., 25 (2000), 1099-1113.


\bibitem{quasi2}
 S. Cordier, P. Degond, P. Markowich and C. Schmeiser, Travelling wave analysis
and jump relations for Euler-Poisson model in the quasineutral limit, Asymptotic Anal.,
11 (1995), 209-240.


\bibitem{CFF1984}
F. F. Chen, Introduction to Plasma Physics and Controlled Fusion, 2nd ed., Springer, New
York, 1984.

\bibitem{DL2015}
R. J. Duan and S. Q. Liu, Stability of the rarefaction wave of the Vlasov-Poisson-Boltzmann
system, SIAM J. Math. Anal., 47 (2015), 3585-3647.

\bibitem{DYZ2021}
R. J. Duan, H. Y. Yin and C.J. Zhu, A half-space problem on the full Euler-Poisson system, SIAM J. Math. Anal., 53 (2021), 6094-6121.

\bibitem{Quas13}
D. G$\acute{e}$rard-Varet, D. Han-Kwan and F. Rousset, Quasineutral limit of the Euler-
Poisson system for ions in a domain with boundaries, Indiana Univ. Math. J., 62 (2013),
359-402.

\bibitem{Quas14}
 D. G$\acute{e}$rard-Varet, D. Han-Kwan and F. Rousset, Quasineutral limit of the Euler-
Poisson system for ions in a domain with boundaries II, J. $\acute{E}$c. polytech. Math., 1 (2014),
343-386.


\bibitem{Guo2011}
Y. Guo and B. Pausader, Global smooth ion dynamics in the Euler-Poisson system,
Commun. Math. Phys., 303 (2011), 89-125.

\bibitem{HS2003}
 S. H. Ha and M. Slemrod, Global existence of plasma ion-sheaths and their dynamics, Comm. Math. Phys., 238 (2003), 149-186.

\bibitem{KM1985}
S. Kawashima and A. Matsumura, Asymptotic stability of traveling wave solutions of systems
for one-dimensional gas motion, Comm. Math. Phys., 101 (1985), 97-127.

\bibitem{IL1929}
I. Langmuir, The interaction of electron and positive ion space charges in cathode sheaths,
Phys. Rev., 33 (1929), 954-989.

\bibitem{NNY2007}
T. Nakamura, S. Nishibata and T. Yuge, Convergence rate of solutions toward stationary
solutions to the compressible Navier-Stokes equation in a half line, J. Differential Equations, 241 (2007), 94-111.

\bibitem{MN1998}
M. Nishikawa, Convergence rate to the traveling wave for viscous conservation laws, Funkcial. Ekvac., 41 (1998), 107-132.

\bibitem{NOS2012}
S. Nishibata, M. Ohnawa and M. Suzuki, Asymptotic stability of boundary layers to the Euler-Poisson equations arising in plasma physics, SIAM J. Math. Anal., 44 (2012), 761-790.

\bibitem{MO2015}
 M. Ohnawa, Asymptotic stability of plasma boundary layers to the Euler-Poisson equations with fluid-boundary interaction, SIAM J. Math. Anal., 47 (2015), 2795-2831.

\bibitem{KUR1991}
 K. U. Riemann, The Bohm criterion and sheath formation. Initial value problems, J. Phys. D, 24 (1991), 493-518.

\bibitem{KUR1995}
K. U. Riemann, The Bohm criterion and boundary conditions for a multicomponent
system, IEEE Trans. Plasma Sci., 23 (1995), 709-716.


\bibitem{MS2011}
M. Suzuki, Asymptotic stability of stationary solutions to the Euler-Poisson equations arising in plasma physics, Kinet. Relat. Models, 4 (2011), 569-588.

\bibitem{MS2016}
M. Suzuki, Asymptotic stability of a boundary layer to the Euler-Poisson equations for a multicomponent plasma, Kinet. Relat. Models, 9 (2016), 587-603.

\bibitem{MS2021}
 M. Suzuki and M. Takayama, Stability and existence of stationary solutions to the Euler-Poisson equations in a domain with a curved boundary, Arch. Ration. Mech. Anal., 239 (2021), 357-387.

\bibitem{LTIL1929}
L. Tonks and I. Langmuir, A general theory of the plasma of an arc, Phys. Rev., 34 (1929), 876-922.

\end{thebibliography}
\end{document}